\documentclass[12pt,a4paper]{amsart}
\usepackage[utf8]{inputenc}

\usepackage{amssymb,amsmath,amsthm}
\usepackage[abbrev,alphabetic]{amsrefs}
\usepackage{tikz-cd}
\usetikzlibrary{arrows}
\usepackage{graphicx}
\usepackage[hidelinks]{hyperref}
\usepackage{mathtools}
\usepackage[titletoc,title]{appendix}
\usepackage{graphicx}
\usepackage{caption}
\usepackage{subcaption}
\usepackage{blindtext}
\usepackage[titletoc,title]{appendix}
\usepackage[english]{babel}
\usepackage{mathrsfs}
\usepackage{microtype}
\usepackage{xcolor}
\usepackage{stmaryrd}
\usepackage[shortlabels]{enumitem}

\usepackage[capitalize]{cleveref}

\newtheorem{theorem}{Theorem}[section]
\newtheorem{lemma}[theorem]{Lemma}
\crefname{lemma}{Lemma}{Lemmata}
\newtheorem{corollary}[theorem]{Corollary}
\newtheorem{proposition}[theorem]{Proposition}
\theoremstyle{definition}
\newtheorem{definition}[theorem]{Definition}
\newtheorem{remark}[theorem]{Remark}
\newtheorem{example}[theorem]{Example}
\newtheorem{conjecture}[theorem]{Conjecture}
\newtheorem*{theorem*}{Theorem}
\newtheorem*{conjecture*}{Conjecture}

\newtheoremstyle{maintheorem}{}{}{\itshape}{}{\bfseries}{}{.5em}{#1 \thmnote{#3}.}
\theoremstyle{maintheorem}
\newtheorem*{mainthm}{Theorem}

\newcommand{\isom}{\cong}

\newcommand{\Z}{\mathbb{Z}}
\newcommand{\Q}{\mathbb{Q}}

\newcommand{\normal}[1]{\langle\!\langle #1 \rangle\!\rangle}

\newcommand{\D}{\mathcal D}
\newcommand{\F}{\mathbb F}
\def\immerses{\looparrowright}
\def\injects{\hookrightarrow}

\DeclareSymbolFontAlphabet{\amsmathbb}{AMSb}

\DeclareMathOperator{\bs}{BS}
\DeclareMathOperator{\cd}{cd}

\DeclareMathOperator{\fp}{FP}

\DeclareMathOperator{\core}{Core}
\DeclareMathOperator{\comm}{Comm}

\DeclareMathOperator{\cat}{CAT}
\DeclareMathOperator{\Hom}{Hom}

\newcounter{dawidcomments}

\newcounter{marcocomments}

\title{Virtually free-by-cyclic groups}
\author{Dawid Kielak}
\author{Marco Linton}

\address{University of Oxford, Oxford, OX2 6GG, UK}

\email{kielak@maths.ox.ac.uk}

\address{University of Oxford, Oxford, OX2 6GG, UK}

\email{marco.linton@maths.ox.ac.uk}

\begin{document}

\begin{abstract}
We obtain a homological characterisation of virtually free-by-cyclic groups among groups that are hyperbolic and virtually compact special. As a consequence, we show that many groups known to be coherent actually possess the stronger property of being virtually free-by-cyclic. In particular, we show that all one-relator groups with torsion are virtually free-by-cyclic, solving a conjecture of Baumslag.
\end{abstract}

\maketitle

\section{Introduction}

A group $G$ is \emph{coherent} if its finitely generated subgroups are finitely presented. There are currently no known examples of coherent hyperbolic groups of cohomological dimension four or greater; on the contrary, many such groups are known to be incoherent. For example, Kapovich has shown that every arithmetic hyperbolic manifold of simple type and dimension at least four has incoherent fundamental group \cite{mkapovich_13}. In his recent survey on coherence, Wise suggests that coherence could essentially be a low-dimensional phenomenon among hyperbolic groups \cite{wise_21}. Nevertheless, since small cancellation groups can be incoherent by the Rips construction \cite{rips_82}, even understanding which hyperbolic groups of dimension two are coherent remains a difficult task. 

A notable class of two-dimensional groups known to be coherent is formed by  \emph{free-by-cyclic groups}, whose coherence is due to Feighn--Handel \cite{feighn_99} -- throughout this article, as in that of Feighn--Handel, we do not require the free kernel to be finitely generated.
Our main result characterises virtually free-by-cyclic groups among all hyperbolic and virtually compact special groups in terms of rational cohomological dimension $\cd_{\Q}$ and $L^2$-Betti numbers $b^{(2)}_n$. Using this characterisation, we show that several classes of two-dimensional hyperbolic groups already known to be coherent actually turn out to be virtually free-by-cyclic.

\begin{theorem}
\label{main_2}
Let $H$ be a hyperbolic and virtually compact special group. The following are equivalent:
\begin{enumerate}[(a)]
    \item\label{itm:zero_betti} $\cd_{\Q}(H)\leqslant 2$ and $b_2^{(2)}(H) = 0$,
\item\label{itm:subgroup of fbc} $H$ has a finite index subgroup that is a subgroup of a \{finitely generated free\}-by-cyclic group,
\item \label{itm:vfbc} $H$ is virtually free-by-cyclic.
\end{enumerate}
\end{theorem}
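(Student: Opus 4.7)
The plan is to prove the equivalence via the cycle $(1) \Rightarrow (2) \Rightarrow (3) \Rightarrow (1)$, with the bulk of the work concentrated in $(1) \Rightarrow (2)$.

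For $(2) \Rightarrow (3)$: if $H' \leq H$ is a finite-index subgroup contained in $F_n \rtimes \Z$, restricting the projection to $\Z$ exhibits $H'$ as an extension of a cyclic group by $H' \cap F_n$, which is free as a subgroup of a free group; hence $H$ is virtually free-by-cyclic. For $(3) \Rightarrow (1)$, the bound $\cd_\Q(H) \leq 2$ is immediate from $\cd_\Q(F \rtimes \Z) \leq \cd_\Q(F) + \cd_\Q(\Z) \leq 2$ applied to a finite-index $F \rtimes \Z \leq H$, and $b_2^{(2)}(H) = 0$ follows by multiplicativity of $L^2$-Betti numbers under finite index from Lück's theorem that mapping tori of free-group automorphisms are $L^2$-acyclic.

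The main implication $(1) \Rightarrow (2)$ I would approach in three steps. Step 1: replace $H$ by a torsion-free compact special finite-index subgroup $H_1 \leq H$; such $H_1$ is RFRS and finitely presented, and still satisfies $\cd_\Q(H_1) \leq 2$ and $b_2^{(2)}(H_1) = 0$. Step 2: apply a virtual algebraic fibering theorem for RFRS groups of the type due to Kielak, refined by Fisher so that vanishing of only the top-degree $L^2$-Betti number is required once the cohomological dimension is controlled; this yields a further finite-index subgroup $H_2 \leq H_1$ together with an epimorphism $\phi \colon H_2 \twoheadrightarrow \Z$ whose kernel $K$ satisfies $\cd_\Q(K) \leq 1$ via the Lyndon--Hochschild--Serre spectral sequence. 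Step 3: upgrade this cohomological bound to the freeness of $K$. The action of $H_2$ on its associated $\cat(0)$ cube complex combined with $\phi$ gives a Bestvina--Brady-style Morse function, and the hypothesis $\cd_\Q \leq 2$ forces the ascending and descending links to be forests, whence the infinite cyclic cover defined by $\phi$ deformation retracts onto its $1$-skeleton and $K$ is free. Consequently $H_2 \cong K \rtimes \Z$ is finitely generated free-by-cyclic, establishing $(2)$.

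The principal obstacle I anticipate is Step 3: deducing the freeness of $K$ from the bound $\cd_\Q(K) \leq 1$. Cohomological dimension one over $\Q$ does not by itself force freeness even for torsion-free groups, and need not imply $\cd_\Z(K) \leq 1$, so Stallings--Swan is not directly applicable. The virtual compact specialness of $H$ must be used essentially here to supply the cubical geometry on which a Bestvina--Brady-type argument can be run, and the hyperbolicity rules out $\cd_\Q = 1$ pathologies that would otherwise obstruct the conclusion; without either hypothesis the implication would be expected to fail.
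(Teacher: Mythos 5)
The decisive gap is in your Step 2: the fibering theorem you invoke does not exist. Kielak's theorem and Fisher's refinement (\cref{fisher_fibring}) require $b_i^{(2)}=0$ for \emph{all} $i\leqslant n$ — in particular $b_1^{(2)}=0$ — to produce a virtual algebraic fibration with kernel of type $\fp_n(\Q)$, and this vanishing is necessary as well as sufficient. Hypothesis (\ref{itm:zero_betti}) gives no control whatsoever on $b_1^{(2)}(H)$, and for the groups of interest it is typically nonzero: a closed hyperbolic surface group satisfies $\cd_{\Q}=2$ and $b_2^{(2)}=0$ but has $b_1^{(2)}=2g-2>0$, as does every finite-index subgroup, and no finite-index subgroup admits a finitely generated normal subgroup with quotient $\Z$. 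So your concluding claim that some $H_2\leqslant H$ of finite index is itself \{finitely generated free\}-by-cyclic is false in general — which is exactly why statement (\ref{itm:subgroup of fbc}) is phrased as ``has a finite-index subgroup that is a \emph{subgroup of} a \{finitely generated free\}-by-cyclic group''. The idea your proposal is missing is the embedding step that occupies most of the paper: before fibering, one enlarges a finite-index subgroup of $H$ to kill $b_1^{(2)}$ without changing $b_2^{(2)}$ or $\cd_{\Q}$. Concretely, the agrarian Freiheitssatz (\cref{freiheit}) provides a quasi-convex free subgroup $B$ with $H^1(H,\D_{\Q H})\to H^1(B,\D_{\Q H})$ an isomorphism, \cref{conjugacy_separated_subgroup_intro} provides a malnormal quasi-convex companion $A\isom B$ meeting no conjugate of $B$, and Wise's combination theorem (\cref{combination_theorem}) shows the HNN-extension $G=H*_{A\isom B}$ is again hyperbolic and virtually compact special; a Mayer--Vietoris computation (\cref{betti_extension}) then gives $b_1^{(2)}(G)=0$ and $b_2^{(2)}(G)=b_2^{(2)}(H)=0$. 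Only now is Fisher's theorem applied — to $G$, not to $H$ — yielding \cref{main_v2_intro} and hence (\ref{itm:subgroup of fbc}).

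Two secondary points. In Step 3 your anticipated obstacle is not actually there for finitely generated kernels: the paper deduces freeness of the finitely generated group $N$ with $\cd_{\Q}(N)=1$ from Dunwoody ($\cd_{\Q}\leqslant 1$ implies virtually free) together with Stallings--Swan (torsion-free virtually free implies free), so no cubical Morse-theory argument is needed, and the one you sketch (ascending/descending links being forests) is not justified as stated. In (\ref{itm:vfbc})$\Rightarrow$(\ref{itm:zero_betti}), recall that the free kernel of a free-by-cyclic group is not assumed finitely generated in this paper, so L\"uck's mapping torus theorem does not apply directly; the paper instead passes through the Feighn--Handel presentation of finitely generated ascending HNN-extensions and the $L^2$-independent hierarchy computation (\cref{ascending_corollary}). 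These two issues are repairable, but Step 2 is not correct as written.
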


Since subgroups of free-by-cyclic groups are free-by-cyclic, \ref{itm:subgroup of fbc} implies \ref{itm:vfbc} without any extra hypotheses. We prove in \cref{ascending_corollary} that \ref{itm:vfbc} implies \ref{itm:zero_betti} without the hyperbolic and virtually compact special assumption. However, \ref{itm:zero_betti} cannot imply \ref{itm:vfbc} in general: the Baumslag--Solitar groups $\bs(1, 2^k)$, with $k\geqslant 2$, are not free-by-cyclic, have rational cohomological dimension $2$, have vanishing second $L^2$-Betti number, and have all of their finite index subgroups isomorphic to $\bs(1, 2^n)$ for various values of $n\geqslant 2$.

It is less clear what happens if only hyperbolicity is dropped, but we still insist on the group being virtually compact special. Similarly, the case of $H$ being virtually RFRS is interesting and open.

\cref{main_2} is relatively easy to apply, since there is rich literature on computing $L^2$-homology. In particular, $L^2$-Betti numbers are known for one-relator groups thanks to Dicks--Linnell~\cite{dicks_07}, for three-manifold groups by the work of Lott--L\"uck~\cite{LottLueck1995}, and for torsion-free lattices in {connected linear} semi-simple Lie groups as determined by {Borel~\cite{Borel1985} (see also the work of Dodziuk~\cite{Dodziuk1979})}. There is also a wealth of results about vanishing of $L^2$-homology (especially in dimension $1$), see L\"uck's book~\cite{luck_02}*{Section 7}.
Finally, in the presence of the Atiyah conjecture, $L^2$-homology becomes a particular kind of group cohomology, and hence all the usual tools of group cohomology are at ones disposal. 

\subsection{Applications}

In 1967, Baumslag \cite{baumslag_67} asked whether all one-relator groups with torsion are residually finite. He later strengthened this by conjecturing that all one-relator groups with torsion should have the stronger property of being virtually free-by-cyclic \cite[Problem 6]{baumslag_86}. This conjecture was also stated by Fine and Rosenberger \cite[Conjecture 7.1]{fine_12} and restated by Baumslag several times \cite[Problem 4.3]{baumslag_99}, \cite[Conjecture 3.7]{baumslag_19}. Baumslag and Troeger proved that certain families of one-relator groups with torsion are virtually free-by-cyclic and even discuss a potential counterexample in \cite{baumslag_08}.

A variation of Baumslag's conjecture due to Wise asserts that all hyperbolic one-relator groups should be virtually free-by-cyclic \cite[Conjecture 17.8]{wise_21}. Since one-relator groups with torsion are hyperbolic by Newman's spelling theorem \cite{newman_68}, this encompasses Baumslag's conjecture. If hyperbolic one-relator groups are virtually compact special as suggested in \cite{wise_14}, \cref{one-relator_fbc} would resolve this more general conjecture.

Recently, Wise \cite{wise_21_quasiconvex} confirmed Baumslag's residual finiteness conjecture by  showing that one-relator groups with torsion are virtually compact special. {Combining this with the computation of the $L^2$-Betti numbers of one-relator groups due to Dicks--Linnell \cite{dicks_07}, we here confirm Baumslag's virtually free-by-cyclic conjecture.}

\begin{corollary}
One-relator groups with torsion are virtually free-by-cyclic.
\end{corollary}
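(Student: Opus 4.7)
The plan is to deduce the statement directly from \cref{one-relator_fbc}. The main step is to realise every one-relator group with torsion as a quotient $S/\normal{w}$ of a surface group $S$, and then to verify the remaining hypotheses of \cref{one-relator_fbc}, namely hyperbolicity and virtual compact specialness.

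So let $G = \langle x_1, \dots, x_n \mid r^k \rangle$ be a one-relator group with torsion, with $k \geqslant 2$ and $r$ not a proper power. I observe that the free group $F = F(x_1, \dots, x_n)$ is itself the fundamental group of a compact surface with non-empty boundary: for example, a $2$-sphere with $n+1$ open disks removed is a compact planar surface whose fundamental group is free of rank $n$. Setting $S = F$ and $w = r^k \in S$ thus puts $G$ in the form $S/\normal{w}$ demanded by \cref{one-relator_fbc}.

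It remains to verify the two hypotheses of \cref{one-relator_fbc}. Hyperbolicity of $G$ is classical and follows from Newman's spelling theorem, as recalled in the paragraph preceding the statement. Virtual compact specialness is precisely the content of Wise's recent theorem \cite{wise_21_quasiconvex}, also cited just before the statement. With both hypotheses in place, \cref{one-relator_fbc} immediately yields that $G$ is virtually free-by-cyclic.

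I foresee no genuine obstacle: all of the substantive content has already been absorbed into \cref{main_2} and its ingredients (the $L^2$-Betti number computation of Dicks--Linnell underlying \cref{one-relator_fbc}, together with Wise's special cubulation of one-relator groups with torsion). The present corollary is then just a bookkeeping assembly of these results, combined with the elementary observation that every finitely generated free group arises as the fundamental group of a compact surface with boundary.
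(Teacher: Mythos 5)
Your proposal is correct and is essentially the paper's own argument: the paper likewise obtains the corollary by viewing a one-relator group with torsion as $F/\normal{r^k}$ with $F$ free (hence the fundamental group of a compact surface with boundary), invoking Newman's spelling theorem for hyperbolicity and Wise's theorem for virtual compact specialness, and then applying \cref{one-relator_fbc}. No gaps to report.
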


{More generally, Dicks--Linnell showed in \cite{dicks_07} that if $S$ is the fundamental group of a compact surface and $w\in S$, then $\cd_{\Q}(S/\normal{w}) \leqslant 2$ and $b_2^{(2)}(S/\normal{w}) = 0$.}

\begin{corollary}
\label{one-relator_fbc}
Let $S$ be the fundamental group of a compact surface, possibly with boundary, and let $w\in S$. If $G = S/\normal{w}$ is hyperbolic and virtually compact special, then $G$ is virtually free-by-cyclic.
\end{corollary}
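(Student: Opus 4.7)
The plan is to reduce the statement to a direct application of \cref{main_2}. Since $G$ is assumed to be hyperbolic and virtually compact special, it suffices to verify condition~(\ref{itm:zero_betti}) of the main theorem, namely that $\cd_{\Q}(G)\leqslant 2$ and $b_2^{(2)}(G)=0$.

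First I would simply quote the cited result of Dicks--Linnell \cite{dicks_07}, which computes the $L^2$-Betti numbers of one-relator and surface-plus-one-relator groups. As noted in the paragraph immediately preceding the corollary, their computation yields exactly the two required inequalities: $\cd_{\Q}(S/\normal{w})\leqslant 2$ and $b_2^{(2)}(S/\normal{w})=0$ whenever $S$ is the fundamental group of a compact surface (closed or with boundary) and $w\in S$. There is nothing to verify beyond checking that the two cases, namely $S$ free (surfaces with boundary) and $S$ a closed surface group, are both covered by their result; both are.

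Having verified hypothesis~(\ref{itm:zero_betti}), I would then invoke \cref{main_2} directly: since $G$ is hyperbolic and virtually compact special by assumption, the implication (\ref{itm:zero_betti})$\Rightarrow$(\ref{itm:vfbc}) yields that $G$ is virtually free-by-cyclic, which is the desired conclusion.

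There is essentially no obstacle in this argument, as all the substantial content is absorbed into \cref{main_2} and the Dicks--Linnell computation. The only point one might wish to comment on is that the cohomological dimension bound is immediate in the case of surfaces with boundary (where $S$ is free, so $S/\normal w$ is a one-relator group of cohomological dimension at most $2$ by Lyndon's theorem), while for closed surfaces one really does need the Dicks--Linnell analysis; in either case the statement is already available in \cite{dicks_07}, so no further argument is needed.
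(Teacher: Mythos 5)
Your proposal is correct and follows exactly the argument the paper intends: quote the Dicks--Linnell computation to get $\cd_{\Q}(S/\normal{w})\leqslant 2$ and $b_2^{(2)}(S/\normal{w})=0$, then apply the implication (\ref{itm:zero_betti})$\Rightarrow$(\ref{itm:vfbc}) of \cref{main_2} to the hyperbolic, virtually compact special group $G$. Nothing is missing.
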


Many interesting insights were obtained via the \emph{negative immersions property}, first introduced by Wise \cite{wise_04}. Using a more general definition, Louder--Wilton \cite{louder_22} showed that in the context of one-relator groups, negative immersions is equivalent to the property of being \emph{$2$-free}. {Additionally, they showed that this is equivalent to the defining relation having primitivity rank at least three; that is, the defining relation does not lie as an imprimitive word in a two generator subgroup of the underlying free group}. Among one-relator groups generated by at least three elements, Louder--Wilton also show that this property is generic by work of Puder \cite{puder_15}. The second author showed in \cite{linton_22} that one-relator groups with negative immersions are hyperbolic and virtually compact special. We may now combine these observations with our main theorem, and obtain the following.

\begin{corollary}
\label{one-relator_neg}
One-relator groups with negative immersions are virtually free-by-cyclic.
\end{corollary}

One-relator groups with torsion and one-relator groups with negative immersions are already known to be coherent by \cite{louder_20,wise_20} and \cite{louder_21_uniform} respectively. We emphasize here that {\cref{one-relator_neg}} does not provide an alternative proof of coherence of one-relator groups with negative immersion as the results in \cite{louder_21_uniform} were used to establish hyperbolicity and virtual compact specialness in \cite{linton_22}. Sapir and \v{S}pakulov\'{a} prove in \cite{sapir_11} that generic one-relator groups, generated by at least three elements, are coherent by showing that they generically embed in ascending HNN-extensions of finitely generated free groups. Nevertheless, \cref{one-relator_fbc} slightly enlarges the class of one-relator groups known to be coherent. For instance, small cancellation one-relator groups are not necessarily in any of the above classes, but are virtually free-by-cyclic by \cref{one-relator_fbc}.

\begin{corollary}
\label{small_cancellation}
Small cancellation one-relator groups are virtually free-by-cyclic, and thus, coherent.
\end{corollary}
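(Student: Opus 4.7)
The plan is to verify the two hypotheses of \cref{one-relator_fbc}---hyperbolicity and virtual compact specialness---for a small cancellation one-relator group $G = \langle x_1, \ldots, x_n \mid w\rangle$, taking the surface group $S$ in \cref{one-relator_fbc} to be the free group $F(x_1, \ldots, x_n)$. This is the fundamental group of a compact surface with boundary (e.g.\ a disk with $n$ open subdisks removed), so \cref{one-relator_fbc} does apply once the two hypotheses are established.

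First, I would recall that a $C'(1/6)$ small cancellation presentation yields a linear isoperimetric inequality via Greendlinger's lemma, which gives word hyperbolicity of $G$. Next, virtual compact specialness would be quoted directly from Wise's cubulation of small cancellation groups: $C'(1/6)$ groups act properly and cocompactly on a CAT(0) cube complex in a way that makes the quotient virtually compact special. Feeding these two properties into \cref{one-relator_fbc} immediately yields the desired conclusion that $G$ is virtually free-by-cyclic.

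For the coherence assertion, I would combine Feighn--Handel's coherence theorem for free-by-cyclic groups \cite{feighn_99} with the standard fact that coherence passes from a finite-index subgroup up to the overgroup: any finitely generated $K \le G$ meets a finite-index free-by-cyclic subgroup $H \le G$ in a subgroup $K \cap H$ of finite index in $K$; since $H$ is coherent, $K\cap H$ is finitely presented, and then so is its finite extension $K$.

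I do not foresee a genuine conceptual obstacle: the proof is essentially a two-line application of \cref{one-relator_fbc} together with well-known citations. The only mild subtlety is to fix a flavour of small cancellation strong enough to simultaneously deliver hyperbolicity and virtual compact specialness; the classical $C'(1/6)$ hypothesis suffices, and weaker metric or graphical conditions could be handled analogously whenever the corresponding cubulation results are available.
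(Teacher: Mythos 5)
Your proposal is correct and takes essentially the same route the paper intends: a $C'(1/6)$ one-relator group is hyperbolic and, by Wise's cubulation of small cancellation groups together with Agol's theorem \cite[Theorem 1.1]{agol_13}, virtually compact special, so \cref{one-relator_fbc} applies with $S$ a finitely generated free group (a compact surface group with boundary). The coherence deduction via Feighn--Handel \cite{feighn_99} and passage of coherence to finite-index overgroups is likewise the standard argument.
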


The two-generator case of \cref{small_cancellation} was established by Kropholler, Wilkes, and the first author \cite{kielak_22}, where it is also shown that one-relator groups are generically virtually free-by-cyclic. If $w(x, y)$ is a non-primitive small-cancellation word that is not a proper power, then the one-relator group $\langle a, b, c, d \mid w([a, b]^2, [c, d]^2)\rangle$ is small-cancellation, torsion-free, does not have negative immersions and is not contained in Sapir--\v{S}pakulov\'{a}'s generic set.

\smallskip

A class of free-by-cyclic one-relator groups of well-known geometric significance is formed by fundamental groups of closed orientable surfaces. If $S$ is such a  surface of genus $g$, then $\pi_1(S)$ acts geometrically on the hyperbolic plane with fundamental domain a compact right-angled regular $2g$-gon. More generally, the \emph{Bourdon building} \cite{bourdon_97} $X_{p, q}$ is the building whose chambers are hyperbolic right-angled $p$-gons (here $p\geqslant 5$) with edges of thickness $q\geqslant 2$ and with vertex links isomorphic to the complete bipartite graph $K_{q,q}$. Since $X_{p, q}$ is a $\cat(-1)$ space, any uniform lattice of $X_{p, q}$ is a hyperbolic group. Moreover, by putting a vertex in the middle of each chamber and connecting each such vertex to the midpoint of each edge, we see that $X_{p, q}$ also has a $\cat(0)$ square complex structure. Thus, any uniform lattice of $X_{p, q}$ is also virtually compact special by Agol's theorem \cite[Theorem 1.1]{agol_13}. In particular, \cref{main_2} now characterises virtually free-by-cyclic uniform lattices of $X_{p, q}$. In \cite{wise_21}, Wise proves that uniform lattices of $X_{p, q}$ are coherent if and only if $q< p-1$. One ingredient for this characterisation was Dymara's result that the second $L^2$-Betti number vanishes if and only if $q<p-1$ \cite[Theorem 4.2]{dymara_04}. Combined with our main theorem, we obtain the following.

\begin{corollary}
\label{bourdon}
Let $p\geqslant 5$ and $q\geqslant 2$ be integers. A uniform lattice of the Bourdon building $X_{p, q}$ is virtually free-by-cyclic if and only if $q<p-1$.
\end{corollary}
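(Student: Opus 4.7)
The plan is to apply \cref{main_2} to a uniform lattice $H$ of the Bourdon building $X_{p,q}$. The two standing hypotheses of that theorem are already in hand from the discussion preceding the statement: $H$ is hyperbolic because $X_{p,q}$ is $\cat(-1)$, and $H$ is virtually compact special by Agol's theorem applied to the $\cat(0)$ square complex structure obtained by placing a vertex at the barycentre of each chamber and joining it to the midpoints of its edges. So it suffices to determine when condition~(\ref{itm:zero_betti}) of \cref{main_2} is satisfied.

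I would first verify the cohomological dimension bound. Passing to a torsion-free finite index subgroup $H_0\leqslant H$, which exists by virtual compact specialness, we obtain a free, cocompact, proper action of $H_0$ on the two-dimensional contractible complex $X_{p,q}$. This forces $\cd(H_0)\leqslant 2$ and hence $\cd_{\Q}(H)\leqslant \vcd(H)\leqslant 2$. The vanishing of $b_2^{(2)}(H)$ is then exactly the content of Dymara's theorem~\cite[Theorem 4.2]{dymara_04}, giving $b_2^{(2)}(H)=0$ if and only if $q<p-1$; this is independent of the choice of uniform lattice because $L^2$-Betti numbers scale multiplicatively under passage to finite index subgroups.

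Putting these together, condition~(\ref{itm:zero_betti}) of \cref{main_2} holds precisely when $q<p-1$, and by the equivalence (\ref{itm:zero_betti})$\Leftrightarrow$(\ref{itm:vfbc}) this says $H$ is virtually free-by-cyclic if and only if $q<p-1$. For the converse direction one may alternatively invoke the unconditional implication (\ref{itm:vfbc})$\Rightarrow$(\ref{itm:zero_betti}) provided by \cref{ascending_corollary}: if $H$ is virtually free-by-cyclic, then $b_2^{(2)}(H)=0$ and Dymara's theorem forces $q<p-1$. Since the whole argument is essentially bookkeeping on top of \cref{main_2} and the cited external inputs, I do not anticipate any substantive obstacle; the only mild subtlety is matching Dymara's $L^2$-Betti number of the building with that of the lattice, which is routine.
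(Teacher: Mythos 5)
Your argument is correct and is essentially the paper's own: the paper derives \cref{bourdon} by exactly this bookkeeping, combining hyperbolicity from the $\cat(-1)$ structure, virtual compact specialness via Agol's theorem applied to the induced square complex, the bound $\cd_{\Q}\leqslant 2$ from the two-dimensional building, and Dymara's criterion for the vanishing of $b_2^{(2)}$, and then invoking the equivalence of (\ref{itm:zero_betti}) and (\ref{itm:vfbc}) in \cref{main_2}. No substantive differences to report.
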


Haglund showed that for $p\geqslant 5$, $q\geqslant 2$ (for the $p = 5$ case, one also needs \cite{haglund_08}), all uniform lattices of $X_{p, q}$ are commensurable \cite{haglund_06}. By combining \cref{bourdon} with the quasi-isometric rigidity results obtained by Bourdon--Pajot \cite{bourdon_00}, we obtain infinitely many quasi-isometry classes of free-by-cyclic groups. {Here, just as in the rest of the article, we emphasise that the free group can be infinitely generated.}

{Thanks to another result of Bourdon \cite[Theorem A]{Bo16}, we may further generalise one direction of \cref{bourdon}.}

\begin{corollary}
\label{conformal_dimension}
{Let $G$ be a hyperbolic and virtually compact special group with $\cd_{\Q}(G)\leqslant 2$. If the Gromov boundary $\partial G$ has conformal dimension less than 2, then $G$ is virtually free-by-cyclic.}
\end{corollary}

{This is indeed a generalisation of one direction of \cref{bourdon} by Bourdon's computation of the conformal dimension of $\partial X_{p, q}$ \cite[Theorem 1.1]{bourdon_97}. To the best of the authors' knowledge, the conformal dimension of the Gromov boundary of a hyperbolic virtually free-by-cyclic group is not known, leaving open the possibility of a converse to \cref{conformal_dimension}.}

Mutanguha in  \cite{mutanguha_21} raises the question whether hyperbolic ascending HNN-extensions of free groups are virtually free-by-cyclic. We answer this question affirmatively, subject to the extra assumption of virtual compact specialness. Hagen--Wise in \cite{hagen_15} show that hyperbolic \{finitely generated free\}-by-cyclic groups are cubulable and thus, by \cite[Theorem 1.1]{agol_13}, are virtually compact special. If hyperbolic ascending HNN-extensions of free groups are also cubulable, as suggested in \cite{hagen_15}, the following corollary would complete the picture.

\begin{corollary}
\label{ascending}
Let $G$ be a finitely generated ascending HNN-extension of a free group. If $G$ is hyperbolic and virtually compact special, then $G$ is virtually free-by-cyclic.
\end{corollary}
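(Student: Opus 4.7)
The plan is to apply \cref{main_2}: since $G$ is hyperbolic and virtually compact special by hypothesis, it suffices to verify condition~\ref{itm:zero_betti}, namely that $\cd_{\Q}(G)\leqslant 2$ and $b_2^{(2)}(G)=0$. Write $G=F*_\phi$ for an injective endomorphism $\phi:F\to F$ of a finitely generated free group $F$ of rank $n$, and realise $\phi$ by a $\pi_1$-injective cellular self-map $f:X\to X$ of a wedge of $n$ circles. The mapping torus $M_f$ is a two-dimensional aspherical $K(G,1)$, since $\phi$ is injective and $X$ is aspherical, so $\cd_{\Q}(G)\leqslant 2$.

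For the vanishing of $b_2^{(2)}(G)$ I would analyse the cellular $L^2$-chain complex of the universal cover of $M_f$, which has the form $\ell^2(G)^n\xrightarrow{\partial_2}\ell^2(G)^{n+1}\to \ell^2(G)$. A Fox calculus computation represents $\partial_2$ by the $n\times(n+1)$ matrix $[\,tI-M\mid \mathbf{v}\,]$, where $M\in M_n(\Z F)$ has entries $M_{ij}=\partial\phi(x_i)/\partial x_j$ and $\mathbf{v}=(1-\phi(x_i))_{i}$. Since hyperbolic virtually compact special groups virtually embed into right-angled Artin groups, a torsion-free finite-index $G_0\leqslant G$ satisfies the strong Atiyah conjecture, and since $b_2^{(2)}(G_0)=[G:G_0]\cdot b_2^{(2)}(G)$ it suffices to show that $\partial_2$ becomes injective after base change to the Linnell division ring $\D(G_0)$.

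The crucial observation is that the $n\times n$ block $tI-M$ admits the formal inverse
\[
(tI-M)^{-1}=\sum_{k\geqslant 0}M^{k}\,t^{-k-1}
\]
in the Novikov completion of $\Z G$ associated with the natural character $G\to \Z$ sending $t\mapsto 1$ and $F\mapsto 0$: the series converges because each $M^k\in M_n(\Z F)$ has character-degree zero, whereas $t^{-k-1}$ has degree strictly decreasing in $k$. By the Novikov-to-Linnell transfer for RFRS groups that underpins \cref{main_2}, invertibility over the Novikov ring forces $tI-M$ to have full rank over $\D(G_0)$, so $\partial_2$ is injective after base change and $b_2^{(2)}(G)=0$; then \cref{main_2} delivers the conclusion. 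The main technical obstacle is the clean invocation of the Novikov-to-Linnell transfer for the ascending HNN structure and the bookkeeping when passing to the finite-index subgroup $G_0$, but neither should cause real difficulty given the agrarian framework already developed to prove the main theorem.
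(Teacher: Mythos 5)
Your overall strategy (verify condition~\eqref{itm:zero_betti} of \cref{main_2} by a direct Fox-calculus computation and a Novikov-type inversion) is viable, and your expansion of $(tI-M)^{-1}$ as a series in $t^{-1}$ is exactly the mechanism used in the paper's proof of \cref{independent} via \cref{Laurent series}. However, there is a genuine gap at the very first step: you write $G=F*_\phi$ with $\phi$ an injective endomorphism of a free group $F$ of \emph{finite} rank $n$. The corollary only assumes that $G$ is finitely generated; the free base is allowed to have infinite rank, and it is not known that such a $G$ can be rewritten as an ascending HNN-extension of a finite-rank free group. This is precisely why the paper invokes Feighn--Handel (\cref{ascending_example}): $G\isom\langle A,B,t\mid t^{-1}at=\psi(a),\ a\in A\rangle$ with $A,B$ finite and $\psi\colon\langle A\rangle\to\langle A,B\rangle$ injective. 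In that presentation the edge group $\langle A\rangle$ is a \emph{proper} free factor of the vertex group, so your mapping-torus picture and the square shape of your matrix no longer literally apply; the argument can be repaired (the block of the Fox matrix on the $A$-columns is still $t^{-1}\mathrm{I}-J$ up to units, and the same series inversion applies), but as written your proof only covers the case of a finite-rank base.

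A second, more technical flaw is the reduction ``it suffices to show that $\partial_2$ becomes injective after base change to $\D(G_0)$'': $\partial_2$ is a matrix over $\Z G$, and $\Z G$ does not embed in the Linnell skew-field of the proper subgroup $G_0$, so this base change does not make sense; to compute $b_2^{(2)}(G_0)$ you would have to restrict the chain complex to $G_0$, which destroys the block structure you rely on. The detour is unnecessary: $G$ acts on the Bass--Serre tree of the splitting with free vertex stabilisers, hence is torsion-free, and satisfies the strong Atiyah conjecture (by Linnell's theorem, as noted in \cref{ascending_example}, or by \cref{special_facts}), so you may work with $\D_{\Q G}$ directly. Likewise, the ``Novikov-to-Linnell transfer for RFRS groups'' is not a citable black box; the precise ingredient is \cref{Laurent series}: with $N=\ker(G\to\Z)$ the ascending union of conjugates of the base group, $\D_{\Q G}$ embeds into the twisted Laurent series ring over $\D_{\Q N}$ (in $t$ or in $t^{-1}$), so invertibility of your square block over that completion gives injectivity over $\D_{\Q G}$ and hence $b_2^{(2)}(G)=0$. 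For comparison, the paper's own proof is a one-liner: it combines \cref{main_2} with \cref{ascending_corollary}, the latter resting on Feighn--Handel, the $L^2$-independence of free factors, and \cref{independent}; your computation in effect re-proves the relevant special case of \cref{independent} by hand, which is fine, but the two gaps above need to be closed.
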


Finally, we also point out the following, which is a consequence of \cite[Corollary 1.2]{wise_20_l2}.

\begin{corollary}
Let $X$ be a finite $2$-complex with $\pi_1(X)$ hyperbolic and compact special. If $H_2(X) = 0$, then $\pi_1(X)$ is virtually free-by-cyclic.
\end{corollary}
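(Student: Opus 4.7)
The strategy is to verify condition (\ref{itm:zero_betti}) of \cref{main_2} for the group $G = \pi_1(X)$. Since $G$ is assumed hyperbolic and compact special (the latter trivially implies virtual compact specialness), the implication (\ref{itm:zero_betti}) $\Rightarrow$ (\ref{itm:vfbc}) of \cref{main_2} will then yield the desired conclusion. What remains is to establish the two homological conditions $\cd_{\Q}(G) \leqslant 2$ and $b_2^{(2)}(G) = 0$.

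Both of these are to be extracted from the cited \cite[Corollary 1.2]{wise_20_l2}. Compact special groups embed in right-angled Artin groups, so they are torsion-free and satisfy the strong Atiyah conjecture, which renders $L^2$-Betti numbers integer-valued and tightly comparable to ordinary Betti numbers of finite classifying spaces. Applied to a finite 2-complex $X$ with $\pi_1(X)$ compact special and $H_2(X) = 0$, the cited corollary yields simultaneously that $X$ is aspherical and that $b_2^{(2)}(G)$ vanishes. Asphericity gives $X$ as a 2-dimensional $K(G, 1)$, so $\cd_{\Q}(G) \leqslant \cd(G) \leqslant 2$; the $L^2$-vanishing is the second required condition.

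With (\ref{itm:zero_betti}) verified, \cref{main_2} concludes that $G = \pi_1(X)$ is virtually free-by-cyclic. The only substantive step is the invocation of \cite[Corollary 1.2]{wise_20_l2}; once the asphericity and $L^2$-vanishing are in hand, the remainder is a direct application of the main theorem of this paper, so the corollary above is essentially a packaging statement combining an external input on compact special 2-complexes with \cref{main_2}.
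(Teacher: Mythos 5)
Your proposal is correct and coincides with the paper's intended argument: the paper gives no separate proof, stating only that the corollary is a consequence of \cite[Corollary 1.2]{wise_20_l2}, implicitly combined with \cref{main_2} exactly as you do, by verifying $\cd_{\Q}(\pi_1(X))\leqslant 2$ and $b_2^{(2)}(\pi_1(X))=0$. The only point to keep straight is what the cited corollary literally supplies: even if it only yields vanishing of the second $L^2$-Betti number of the universal cover rather than asphericity outright, asphericity (hence $\cd_{\Q}\leqslant 2$) still follows since $\pi_1(X)$ is torsion-free and satisfies the strong Atiyah conjecture, so the vanishing at the chain level forces $\ker\partial_2=0$ in the free $\Q\pi_1(X)$-chain complex of the universal cover --- a footnote, not a gap.
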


\subsection{Overview}

We first introduce the necessary background on agrarian and $L^2$-homology. Next, we proceed to prove an agrarian version of the Freiheitssatz, based on the $L^2$-Freiheitssatz of Peterson--Thom \cite{peterson_11}.

\begin{theorem}
	\label{freiheit_intro}
Let {$R$ be a ring, $G$ a group and $\D$ a skew-field into which $RG$ embeds}. Let $\{g_1, \dots, g_n\}$ be a generating set for $G$. There is a subset of the generators $\{h_1, \dots, h_k\}$ generating freely a free subgroup $H$ such that the restriction  map
	\[
	H^1(G, \mathcal{D})\to H^1(H, \mathcal{D})
	\]
	is an isomorphism.
\end{theorem}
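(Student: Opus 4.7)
The plan is a linear-algebra argument over the skew field $\D$, combined with the Magnus embedding to certify freeness. First, I view $Z^1(G, \D)$ as a right $\D$-subspace of $\D^n$ via evaluation at the generators: $d \mapsto (d(g_1), \ldots, d(g_n))$. This map is injective because $\{g_1, \ldots, g_n\}$ generates $G$, so $Z^1(G, \D)$ has finite dimension $k := \dim_\D Z^1(G, \D) \leq n$. Using that row rank equals column rank for matrices over a skew field, I then select a subset $I \subseteq \{1, \ldots, n\}$ of size $k$ such that the coordinate projection $\pi_I \colon \D^n \to \D^I$ restricts to an isomorphism $Z^1(G, \D) \xrightarrow{\sim} \D^I$, and set $\{h_1, \ldots, h_k\} := \{g_i : i \in I\}$ and $H := \langle h_1, \ldots, h_k\rangle$.

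Next, I observe that the restriction map $Z^1(G, \D) \to Z^1(H, \D)$, translated through evaluation at the $h_j$, coincides with $\pi_I|_{Z^1(G, \D)}$, and its image lies in $Z^1(H, \D) \subseteq \D^{|I|}$. By construction this image equals all of $\D^{|I|}$, forcing $Z^1(H, \D) = \D^{|I|}$. I then argue $H$ must be free on $\{h_j\}$: if $N := \ker(F_I \twoheadrightarrow H)$ were non-trivial, then $N$ would be a non-trivial free group and the classical Magnus embedding $N^{\ab} \hookrightarrow (\Z H)^{|I|}$ via Fox derivatives, composed with the agrarian map to $\D$, would produce a non-zero Fox-derivative vector $\vec f = ((\partial r/\partial h_i)^\pi)_{i\in I} \in \D^{|I|}$ for some $r \in N$. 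Since $r = 1$ in $H$, every $\vec d \in Z^1(H, \D)$ must satisfy $\sum_i f_i \, d_i = 0$, placing $Z^1(H, \D)$ inside a proper hyperplane of $\D^{|I|}$ and contradicting $Z^1(H, \D) = \D^{|I|}$.

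Finally, since $G$ and $H$ are non-trivial and torsion-free (as $\D$-agrarian groups), $\D^G = \D^H = 0$, so both $B^1(G, \D)$ and $B^1(H, \D)$ are canonically identified with $\D$ via the inner-derivation scalar, and $\pi_I$ restricts to the identity between them. Combined with the isomorphism on $Z^1$, this yields the desired isomorphism on $H^1$. The main obstacle is the freeness step, where one must ensure the Magnus Fox-derivative image is non-zero in $\D$; this is immediate in characteristic zero, while in positive characteristic $p$ it follows by a descent argument using that $\bigcap_k p^k (\Z H)^{|I|} = 0$, which prevents the image of $N^{\ab}$ from lying entirely in $p (\Z H)^{|I|}$ and hence from vanishing modulo the kernel of $\Z H \to \D$.
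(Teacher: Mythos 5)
Your route is genuinely different from the paper's and, in outline, it works. The paper proves the stronger \cref{freiheit} (which also controls the subgroups generated by powers $h_i^{m_i}$) by a maximality argument: one takes a maximal subset of the generators that freely generates a subgroup onto whose $H^1$ the restriction is surjective, and uses \cref{PT lemma} (a cocycle vanishing on $H$ but not on $g$ forces $\langle H,g\rangle\isom H*\langle g\rangle$, and is then non-inner) together with the Betti numbers of free groups (\cref{L2 Betti of Fn}) to upgrade surjectivity to an isomorphism. You instead choose the subset in one shot by linear algebra over $\D$: embed $Z^1(G,\D)\injects\D^n$ by evaluation, pick coordinates $I$ on which $Z^1(G,\D)$ projects isomorphically, and observe that then evaluation identifies $Z^1(H,\D)$ with $\D^I$ and the restriction is an isomorphism on $Z^1$ and on $B^1$ (both groups being non-trivial, with the trivial-$G$ case handled separately), hence on $H^1$; freeness of $H$ is then extracted from Fox calculus, since a non-trivial relation with some Fox derivative non-zero in $\D$ would confine $Z^1(H,\D)$ to a proper subspace of $\D^I$. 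What the paper's approach buys is the extra statement about powers and the avoidance of Fox calculus; what yours buys is a short, purely linear-algebraic selection of the generators, with freeness as a consequence of the cohomological conclusion rather than an input maintained through an induction.

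The one step whose justification is insufficient as written is the positive-characteristic case of the freeness argument. You need an element of $N=\ker(F_I\twoheadrightarrow H)$ whose Fox-derivative vector is non-zero modulo $p$, and you claim this follows because $\bigcap_k p^k(\Z H)^{|I|}=0$ ``prevents the image of $N^{\ab}$ from lying entirely in $p(\Z H)^{|I|}$''. That implication is false for an arbitrary non-zero subgroup (e.g.\ $p(\Z H)^{|I|}$ itself is non-zero and lies in $p(\Z H)^{|I|}$). What saves the argument is that the image of $N^{\ab}$ is closed under division by $p$: by the Crowell/Gruenberg exact sequence $0\to N^{\ab}\to(\Z H)^{|I|}\to\Z H\to\Z\to 0$ the image is exactly the kernel of $(a_i)\mapsto\sum_i a_i(h_i-1)$, so if $v$ lies in the image and $v=p^m u$ with $m$ maximal (using $\bigcap_k p^k=0$), then $u$ is again in the kernel, hence in the image, and is not divisible by $p$; equivalently, tensoring $0\to N^{\ab}\to(\Z H)^{|I|}\to I_H\to 0$ with $\F_p$ stays exact because the augmentation ideal $I_H$ is $\Z$-torsion-free, so the mod-$p$ Fox map $N^{\ab}\otimes\F_p\to(\F_p H)^{|I|}$ is injective. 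With that standard supplement your descent goes through and the proof is complete.
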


The reasons for doing it in the generality of agrarian homology are two-fold: first, the proofs lie naturally in this setting, and second, \cref{main_2} should admit an agrarian generalisation, in which $L^2$-Betti numbers are replaced with agrarian Betti numbers. This cannot be done at present, since the theory of agrarian homology has not yet been sufficiently developed. The interest here lies in the possibility of using Jaikin-Zapirain's skew-fields in place of the Linnell skew-field, effectively giving a version of \cref{main_2} using a finite-fields version of $L^2$-Betti numbers. This is desirable for example from the perspective of profinite invariants.

In \cref{section_4}, we relate the second $L^2$-Betti number of HNN-extensions and amalgamated free products with the second $L^2$-Betti numbers of the base groups. These results lead naturally to the definition of an $L^2$-independent hierarchy. We show that several classes of groups appearing in the literature admit $L^2$-independent hierarchies, and we compute their $L^2$-Betti numbers.

In \cref{section_5} we provide a self-contained proof of the following result that may be of independent interest.

\begin{theorem}
\label{conjugacy_separated_subgroup_intro}
Let $n$ be a positive integer, let $G$ be a non-elementary torsion-free hyperbolic group and let $H_1, \dots, H_k\leqslant G$ be quasi-convex subgroups of infinite index. There exists a malnormal quasi-convex free subgroup $H\leqslant G$ of rank $n$ such that $H_i\cap H^g = 1$ for all $i$ and all $g\in G$.
\end{theorem}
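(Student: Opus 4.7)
The plan is to build $H$ by induction on $n$ as a ping-pong subgroup generated by high powers of generic loxodromic elements of $G$, controlling the limit set so that it avoids the ``bad'' boundary set
$\Lambda := \bigcup_{i=1}^{k} \bigcup_{a \in G} a \cdot \partial H_i \subset \partial G$.
Each $\partial H_i$ is a closed nowhere-dense subset of $\partial G$: it is closed by quasi-convexity and nowhere dense because $H_i$ has infinite index in $G$ (a standard consequence for quasi-convex subgroups of non-elementary hyperbolic groups, most cleanly seen via Patterson--Sullivan measure or via a minimality argument combined with B.~H.~Neumann's theorem on finite coverings by subgroups). Hence $\Lambda$ is meager in $\partial G$, and because loxodromic fixed-point pairs are dense in $(\partial G \times \partial G) \setminus \Delta$, there is an abundance of loxodromic elements whose two fixed points both lie in $\partial G \setminus \Lambda$.

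For the base case $n = 1$, pick a primitive loxodromic $g \in G$ (i.e.\ not a proper power) with both fixed points in $\partial G \setminus \Lambda$, and set $H := \langle g \rangle$. Then $H$ is quasi-convex, and malnormal because the maximal cyclic subgroup containing a loxodromic in a torsion-free hyperbolic group is malnormal. The intersection property is immediate from the $G$-invariance of $\Lambda$: any non-trivial element of $H^a$ has fixed points in $a^{-1} \cdot (\partial G \setminus \Lambda) = \partial G \setminus \Lambda$, and so cannot lie in any $bH_ib^{-1}$. For the inductive step, suppose $K \leqslant G$ is a free malnormal quasi-convex subgroup of rank $n-1$ satisfying the strengthened property that every non-trivial element of $K$ has fixed points in $\partial G \setminus \Lambda$. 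Pick a primitive loxodromic $h \in G$ whose two fixed points lie in $\partial G \setminus \left( \Lambda \cup \bigcup_{b \in G} b \cdot \partial K \right)$. For $N$ sufficiently large, North--South dynamics and ping-pong on $\partial G$ yield $\langle K, h^N \rangle = K * \langle h^N \rangle$, which is free of rank $n$ and quasi-convex in $G$; a combination theorem for malnormal quasi-convex subgroups of hyperbolic groups (such as Gitik's or Mart\'{\i}nez-Pedroza's) then provides malnormality.

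The principal difficulty is preserving the strengthened fixed-point condition across the inductive step. Every non-trivial element of $K * \langle h^N \rangle$ is conjugate in the free product either to an element of $K$, to a non-trivial power of $h^N$, or to a cyclically reduced word of length $\geqslant 2$; the first two cases are handled by the inductive hypothesis and the choice of $h$ respectively, but the fixed points of a cyclically reduced element of length $\geqslant 2$ are limits of its iterates and are not a priori controlled by the base data. I would resolve this by a genericity argument: for each cyclically reduced conjugacy class in $K * \langle X \rangle$ (where $X$ is a formal symbol), specialising at $h^N$ produces a loxodromic element of $G$ whose fixed points depend on $h$ and $N$ in a controlled way, and the set of pairs $(h, N)$ yielding a fixed point inside $\Lambda$ is ``thin''; aggregating over the countably many cyclically reduced classes then still leaves valid choices. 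Once the strengthened condition holds for $H := K * \langle h^N \rangle$, the conjugate-intersection property $H_i \cap H^g = 1$ follows immediately: a non-trivial element of $H^g \cap a H_i a^{-1}$ would be loxodromic with fixed points lying simultaneously in $g \cdot (\partial G \setminus \Lambda) = \partial G \setminus \Lambda$ and in $a \cdot \partial H_i \subset \Lambda$, which is a contradiction.
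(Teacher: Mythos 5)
Your overall strategy (boundary dynamics and ping-pong with high powers of loxodromics avoiding the translated limit sets) is genuinely different from the paper's route, but as written it has two real gaps, one of which is the entire difficulty of the theorem. First, the base case: from ``$\Lambda$ is meager'' and ``loxodromic fixed-point pairs are dense in $\partial G\times\partial G\setminus\Delta$'' you cannot conclude that some loxodromic has both fixed points outside $\Lambda$ --- the set of such fixed-point pairs is countable, and a countable dense set can be disjoint from a comeager set. This step is fixable, but not by genericity: you need Minasyan's theorem producing $x$ with $\langle x\rangle\cap H_i^g=1$ for all $i,g$ (which is exactly \cref{cyclic_conjugates} in the paper) together with the limit-set intersection theorem for quasi-convex subgroups ($\Lambda(A\cap B)=\Lambda(A)\cap\Lambda(B)$) to upgrade this to the fixed points avoiding every translate $a\cdot\partial H_i$.

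Second, and more seriously, the inductive step is not a proof: the claim that for each cyclically reduced mixed word the set of pairs $(h,N)$ whose specialisation has a fixed point in $\Lambda$ is ``thin'' is precisely the content of the theorem, and no argument is given. Note that $\Lambda$ is $G$-invariant, hence dense in $\partial G$, and it is not closed, so no soft genericity or open-neighbourhood control over the limit points of mixed words can work; what is needed is a quantitative ping-pong estimate showing that every element of $K*\langle h^N\rangle$ stays uniformly far (in the appropriate sense) from all translates of all $\partial H_i$ and of $\partial K$ --- but by the limit-set intersection theorem this ``strengthened fixed-point condition'' is essentially equivalent to the conclusions $H\cap H_i^g=1$ and malnormality themselves, so the proposal largely restates the inductive difficulty rather than resolving it. The appeal to ``a combination theorem (Gitik or Mart\'inez-Pedroza)'' for malnormality has the same problem: those results give quasi-convexity of the amalgam, while malnormality in $G$ again concerns conjugates of mixed elements and needs separate justification. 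For contrast, the paper avoids all of this by proving the statement first for free groups by an explicit Stallings-graph construction (\cref{malnormal_free}), and then reducing the hyperbolic case to the free case: it takes the commensurator $F$ of a rank-two quasi-convex free subgroup (Arzhantseva's theorem makes $F$ quasi-convex, free, and its own commensurator), uses the finiteness of intersections $F\cap H_i^g$ and $F\cap F^g$ up to conjugacy in $F$ (\cref{quasi-convex_intersection}) to turn the infinitely many constraints into finitely many infinite-index subgroups of $F$, and then applies the free-group proposition inside $F$. If you want to salvage your approach, you would need to either import such a finiteness statement or carry out the quantitative ping-pong (in the spirit of Kapovich's original proof), neither of which is sketched in your proposal.
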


Kapovich establishes this result for free groups in order to show that hyperbolic groups contain malnormal quasi-convex free subgroups \cite{kapovich_99}. We provide an alternative proof of the free group statement and use it to prove \cref{conjugacy_separated_subgroup_intro}. 

Our motivation for establishing \cref{freiheit_intro,conjugacy_separated_subgroup_intro} is to prove an embedding result: we combine them to show that torsion-free hyperbolic and virtually compact special groups embed as quasi-convex subgroups of hyperbolic and virtually compact special groups with vanishing first $L^2$-Betti number. By combining this embedding result with the fibring theorem of Fisher \cite{fisher_21}, we prove our strongest result.

\begin{theorem}
\label{main_v2_intro}
Let $H$ be a hyperbolic and virtually compact special group with $\cd_{\Q}(H)\geqslant 2$. There is some finite index subgroup $L\leqslant H$ and a commutative diagram of exact sequences of groups
\[
\begin{tikzcd}
1 \arrow[r] & K \arrow[d, hook] \arrow[r] & L \arrow[d, hook] \arrow[r] & \mathbb{Z} \ar[equal]{d} \arrow[r] & 1 \\
1 \arrow[r] & N \arrow[r]                 & G \arrow[r]                 & \mathbb{Z} \arrow[r]                                & 1
\end{tikzcd}
\]
with the following properties:
\begin{enumerate}
\item $G$ is hyperbolic, compact special, and contains $L$ as a quasi-convex subgroup.
\item $\cd_{\Q}(G) = \cd_{\Q}(H)$.
\item $N$ is finitely generated.
\item If $b_i^{(2)}(H) = 0$ for all $2\leqslant i\leqslant n$, then $N$ is of type $\fp_n(\Q)$.
\item If $b_i^{(2)}(H) = 0$ for all $i\geqslant 2$, then $\cd_{\Q}(N) = \cd_{\Q}(H) - 1$.
\end{enumerate}
\end{theorem}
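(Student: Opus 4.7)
The plan follows the strategy announced in the overview: first construct an embedding of a torsion-free finite-index subgroup of $H$ into a hyperbolic, virtually compact special group with vanishing first $L^2$-Betti number, then apply Fisher's fibring theorem. Since $H$ is virtually compact special, pass to a torsion-free finite-index subgroup $L_0 \leqslant H$; this preserves $\cd_\Q$ and rescales the $L^2$-Betti numbers by the index, so every vanishing hypothesis descends to $L_0$.

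For the embedding, apply \cref{freiheit_intro} with the Linnell skew-field $\D$ to obtain a subset of the generators of $L_0$ freely generating $F \leqslant L_0$, of rank $k = b_1^{(2)}(L_0)+1$, such that $i^{*} : H^1(L_0,\D) \to H^1(F,\D)$ is an isomorphism. Using \cref{conjugacy_separated_subgroup_intro}, arrange (after a small modification) that $F$ is malnormal and quasi-convex in $L_0$, and produce a second malnormal quasi-convex free subgroup $F' \leqslant L_0$ of the same rank, independent from $F$ in the sense that $F \cap (F')^g = 1$ for every $g \in L_0$. Fix an isomorphism $\phi : F \xrightarrow{\sim} F'$, chosen generically so that $1 - \phi^{*} \circ (i^{*})^{-1}$ is invertible on $H^1(F,\D)$, and form the HNN extension
\[
G_0 = \langle L_0, t \mid t x t^{-1} = \phi(x),\ x \in F \rangle.
\]
The Bestvina--Feighn combination theorem and the combination theorems for virtually special groups yield that $G_0$ is hyperbolic and virtually compact special; $\cd_\Q(G_0) = \cd_\Q(L_0)$, since $\cd_\Q(F)=1$; and the Mayer--Vietoris sequence in the Linnell skew-field
\[
\cdots \to H^{i-1}(F,\D) \to H^i(G_0,\D) \to H^i(L_0,\D) \xrightarrow{i^{*} - \phi^{*}} H^i(F,\D) \to \cdots
\]
combined with invertibility of $i^{*} - \phi^{*}$ in degree $1$ and vanishing of $H^i(F,\D)$ for $i \geqslant 2$ yields $b_1^{(2)}(G_0) = 0$ and $b_i^{(2)}(G_0) = b_i^{(2)}(L_0)$ for all $i \geqslant 2$.

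Next, pass to a compact special finite-index subgroup $G \leqslant G_0$ and set $L := L_0 \cap G$. By Fisher's fibring theorem \cite{fisher_21}, since $G$ is hyperbolic, RFRS, and $b_1^{(2)}(G)=0$, there is an epimorphism $\chi : G \twoheadrightarrow \Z$ with finitely generated kernel $N$, which is moreover of type $\fp_n(\Q)$ whenever $b_i^{(2)}(G) = 0$ for $2 \leqslant i \leqslant n$ and satisfies $\cd_\Q(N) = \cd_\Q(G)-1$ whenever $b_i^{(2)}(G)=0$ for all $i \geqslant 2$. Choose $\chi$ inside the open fibering cone so that $\chi|_L \neq 0$; by replacing $G$ by the finite-index preimage $\chi^{-1}(\chi(L)) \leqslant G$ we may assume $\chi|_L$ is surjective; then setting $K := L \cap N$ completes the commutative diagram with the stated properties (1)--(5).

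The main obstacle is the embedding step, and in particular reconciling the canonical choice of free subgroup $F$ forced by \cref{freiheit_intro} (generated by a subset of the generators of $L_0$) with the malnormality and quasi-convexity needed for the combination theorems, while simultaneously ensuring invertibility of the Mayer--Vietoris differential $i^{*} - \phi^{*}$. This is exactly where the combined strength of \cref{freiheit_intro} (providing the isomorphism $i^{*}$) and \cref{conjugacy_separated_subgroup_intro} (providing a well-positioned second malnormal free subgroup $\phi(F)$) is used. Once the embedding is established, the remainder of the proof is bookkeeping with Fisher's theorem.
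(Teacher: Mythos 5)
Your overall strategy is the paper's: build an HNN-extension of a torsion-free finite-index subgroup over well-placed quasi-convex free subgroups (one coming from the Freiheitssatz, one malnormal and conjugacy-separated) to kill $b_1^{(2)}$, then apply Fisher's theorem. But there are two genuine gaps at exactly the points where the real work happens. First, your justification that $b_1^{(2)}(G_0)=0$ rests on choosing $\phi$ ``generically so that $1-\phi^{*}\circ(i^{*})^{-1}$ is invertible,'' and no such genericity argument is available: the isomorphisms $F\to F'$ form a discrete set and you give no reason why any of them makes this operator invertible over the skew-field. Moreover this is not even the right operator: the degree-one Mayer--Vietoris differential for the HNN-extension is of the form $X-tY$, where the stable letter $t$ acts on the coefficients $\D_{\Q G_0}$. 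The paper's mechanism (\cref{independent}) is that surjectivity of $X$ alone (i.e.\ $L^2$-independence of the Freiheitssatz subgroup, supplied by \cref{freiheit}) already forces $X-tY$ to be injective, via inverting $X'$ over $\D_{\Q N}$ and expanding $(\mathrm I-tY'Z)^{-1}$ as a twisted Laurent series (\cref{Laurent series}); no choice of $\phi$ is involved. Without this (or an equivalent) argument, your claims $b_1^{(2)}(G_0)=0$ and $b_i^{(2)}(G_0)=b_i^{(2)}(L_0)$ for $i\geqslant 2$ are unproven, and the degree-two comparison also depends on it. (Relatedly, your ``small modification'' making the Freiheitssatz subgroup itself malnormal is neither justified nor needed -- the combination theorem only requires malnormality of the other associated subgroup together with $A\cap B^g=1$; what you do need, and do not address, is quasi-convexity of the Freiheitssatz subgroup, which the paper arranges by passing to powers of the generators, which is why \cref{freiheit} is stated for the subgroups $H_{\mathbf m}$.)

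Second, the final step ``choose $\chi$ inside the open fibering cone so that $\chi|_L\neq 0$'' is not supported by anything cited: \cref{fisher_fibring} is a pure existence statement for a single character on some finite-index subgroup, with no cone of fibered characters, and even if one imports an openness statement you would still have to show that the annihilator of $L=L_0\cap G$ is a proper subspace of $H^1(G;\Q)$, i.e.\ that some character of $G$ is non-zero on $L$ -- this can fail for the particular subgroup $L$ you fixed, since the image of $H'$ in $H_1(G;\Q)$ may be killed by the HNN relations. The paper circumvents precisely this: it does not prescribe which finite-index subgroup of $H$ maps onto $\Z$, but argues by Bass--Serre theory that if every conjugate of $H'$ intersected $G$ inside $N$, then finite generation of $N$ would force $N=\normal{H'}\cap G$, contradicting the fact that the normal closure of the base of a proper HNN-extension is not finitely generated; hence some conjugate of $H'$ meets $G$ in a subgroup $L$ with non-trivial image in $\Z$, and one then passes to a further finite-index subgroup to make $\chi|_L$ surjective. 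You need an argument of this kind (or a genuine proof of the cone statement plus properness of the annihilator) for the diagram to exist.
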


We may now prove our main theorem.

\begin{proof}[Proof of Theorem \ref{main_2}]
{Since all subgroups of free groups are free, and of cyclic groups are cyclic}, any subgroup of a free-by-cyclic group is free-by-cyclic. Hence, \ref{itm:subgroup of fbc} implies \ref{itm:vfbc}. Since $L^2$-Betti numbers are multiplicative in the index (see the book of L\"uck \cite[Theorem 6.54]{luck_02}) and finite-index subgroups and overgroups have the same rational cohomological dimension, a result Swan attributes to Serre \cite[Theorem 9.2]{Swan1969}, the fact that \ref{itm:vfbc} implies \ref{itm:zero_betti} follows from our computation of the $L^2$-Betti numbers of free-by-cyclic groups in \cref{ascending_corollary}. 

So now we prove that \ref{itm:zero_betti} implies \ref{itm:subgroup of fbc}. If $\cd_{\Q}(H) \leqslant 1$ then $H$ is virtually free by the work of Dunwoody \cite[Corollary 1.2]{dunwoody_79}. Since finitely generated free groups are by definition subgroups of \{finitely generated free\}-by-cyclic groups, the result holds in this case. So now suppose that $\cd_{\Q}(H) = 2$. By Theorem \ref{main_v2_intro} we see that $H$ has a finite index subgroup that embeds in a torsion-free group $G$ that has a finitely generated normal subgroup $N\leqslant G$ such that $G/N\isom \Z$ and $\cd_{\Q}(N) = 1$. Stallings showed that torsion-free finitely generated groups of cohomological dimension one are free \cite{sta_68}. Thus, $N$ is free and so $G$ is \{finitely generated free\}-by-cyclic. 
\end{proof}

\subsection*{Recent developments}

Since the first version of this paper appeared, there has been a number of developments that should be mentioned here. Jaikin-Zapirain~\cite{JaikinZapirain2023} proved that all torsion-free one-relator groups are homologically coherent. This was quickly followed by a proof of the second author \cite{Linton2023} that they are all in fact coherent. {Finally, Jaikin--Zapirain and the second author combined and generalised their results in \cite{JZL23}.}

In a different direction, \cref{conj} was confirmed by Fisher--S\'anchez-Peralta~\cite{FisherSanchezPeralta2023}.

{Finally, and even more recently, Fisher~\cite{Fisher2024} showed that \cref{main_2} holds with the hypotheses of being hyperbolic and virtually compact special being replaced by being virtually RFRS.}

\subsection*{Acknowledgements}
This work has received funding from the European Research Council (ERC) under the European Union's Horizon 2020 research and innovation programme (Grant agreement No. 850930).

The authors are grateful to Ismael Morales for pointing out the work of Peterson--Thom. They are also grateful to Jack Button, John Mackay, Alan Logan, Henry Wilton, and Daniel Woodhouse for their comments on the previous version of this article. {They would also like to thank the anonymous referee for the many helpful comments, especially for pointing out a simpler version of \cref{betti_extension} which is explained in \cref{simplification}.}

\section{Agrarian and \texorpdfstring{$L^2$}{L\texttwosuperior}-homology}

We will make crucial use of the theory of $L^2$-homology. We are going to present it as a special case of
agrarian homology, as introduced by Henneke and the first author in \cite{HennekeKielak2021} and then generalised by Fisher{~\cite{fisher_21}}, for two reasons.
First, for torsion-free groups satisfying the strong Atiyah conjecture, the agrarian point of view is much more straight-forward than any of the competing descriptions of $L^2$-homology. And all of the groups we will be interested in here do satisfy the strong Atiyah conjecture and are (virtually) torsion-free.
Second, the proofs of the Freiheitssatz of Peterson--Thom and of our generalisation of it work most naturally in the agrarian setup. 

We expect that the later parts of the article could also in the future be generalised to the agrarian setting. At this moment however the theory of agrarian homology is not sufficiently developed: more concretely, there is no analogue of Schreve's theorem \cite{schreve_14}*{Theorem 1.2}. 

Let us now briefly recall the necessary definitions.

\begin{definition}
	Let $R$ be a ring and $\mathcal D$ a skew-field. We will say that $G$ is $\mathcal D$-agrarian over $R$ if the group ring $RG$ embeds into $\mathcal D$ as a ring.
	
	The embedding turns $\mathcal D$ into an $RG$-bimodule, and we use this module structure to define the \emph{agrarian $\D$-homology} and \emph{$\D$-cohomology} to be
	\[
	H_i(G,\D) {= H_i(P_*\otimes_{RG}\D)} \textrm{ and } H^i(G,\D) {= H^i(\Hom_{RG}(P_*, \D))}
	\]
	respectively, {where $P_*\to R\to 0$ is any projective resolution of $R$ as a trivial $RG$-module}. We define the \emph{$i$th agrarian $\D$-Betti number} of $G$ to be $b_i^\D(G) = \dim_\D H_i(G,\D)$.
\end{definition} 

In \cite{HennekeKielak2021}, the ring $R$ was always $\Z$, however, following  Fisher, we want to allow $R$ to be a finite field. 

\begin{lemma}
	\label{hom is cohom}
	Let $H$ be a $\D$-agrarian group over a ring $R$. The following $\D$-modules are isomorphic:
	\[
	H^i(H, \D)\isom \hom_\D\big(H_i(H, \D),\D\big).
	\]
	In particular, if $b_i^\D({H})$ is finite then 
		\[
	H^i(H, \D)\isom H_i(H, \D).
	\]
\end{lemma}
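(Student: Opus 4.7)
The plan is to compute both sides from a free $\Z H$-resolution $P_\bullet$ of $\Z$ and to observe that the cochain complex computing $H^\bullet(H,\D)$ is the $\D$-dual of the chain complex computing $H_\bullet(H,\D)$; the conclusion then follows because $\D$ is a skew-field, so dualising commutes with taking (co)homology.

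Concretely, I would first recall that
\[
H_i(H,\D)=H_i(P_\bullet\otimes_{\Z H}\D)\quad\text{and}\quad H^i(H,\D)=H^i\bigl(\hom_{\Z H}(P_\bullet,\D)\bigr),
\]
and note the natural isomorphism
\[
\hom_{\Z H}(P,\D)\;\cong\;\hom_{\D}(P\otimes_{\Z H}\D,\D)
\]
for every free $\Z H$-module $P$: both sides are a direct product of copies of $\D$ indexed by any $\Z H$-basis of $P$. Applying this degreewise turns the cochain complex into the $\D$-dual of the chain complex $C_\bullet:=P_\bullet\otimes_{\Z H}\D$.

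Next, I would exploit that $C_\bullet$ is a chain complex of (right) $\D$-modules, and that over a skew-field every module is free, hence every short exact sequence splits and $\hom_{\D}(-,\D)$ is exact. Writing $Z_i=\ker d_i$ and $B_i=\operatorname{im} d_{i+1}$, the split short exact sequences $0\to Z_i\to C_i\to B_{i-1}\to 0$ and $0\to B_i\to Z_i\to H_i(C_\bullet)\to 0$ dualise to split short exact sequences of the same shape. A diagram chase (or equivalently the universal coefficient theorem for chain complexes of vector spaces over a skew-field) then yields
\[
H^i\bigl(\hom_{\D}(C_\bullet,\D)\bigr)\;\cong\;\hom_{\D}\bigl(H_i(C_\bullet),\D\bigr),
\]
which combined with the previous step gives the first claimed isomorphism.

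The final assertion is immediate: if $b_i^\D(H)$ is finite, then $H_i(H,\D)$ is a finite-dimensional $\D$-vector space, hence noncanonically isomorphic to its $\D$-dual $\hom_\D(H_i(H,\D),\D)\cong H^i(H,\D)$. I don't anticipate a genuine obstacle here; the only point requiring care is the handling of left/right module conventions coming from the $RG$-bimodule structure on $\D$, but this is a purely bookkeeping matter and poses no real difficulty.
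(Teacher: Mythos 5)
Your proposal is correct and follows essentially the same route as the paper: a free $\Z H$-resolution, the identification $\hom_{\Z H}(P_i,\D)\cong\hom_{\D}(P_i\otimes_{\Z H}\D,\D)$, and then the fact that over a skew-field everything splits, so dualising commutes with taking homology. The paper simply writes the splitting out explicitly as a three-term decomposition of each $P_i\otimes_{\Z H}\D$ rather than invoking split exact sequences or a universal coefficient argument, but this is the same idea.
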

\begin{proof}
	The last statement is clear, so let us focus on proving the first one.
	
	Let $P_*$ be a free resolution of $\Z$ over $\Z H$ with differential $\partial$. We have
	\[
	\hom_{\Z H}(P_i, \D)\isom \hom_{\D}(P_i\otimes_{\Z H}\D, \D).
	\]
	Since $\D$ is  {a} skew-field, every module $P_i\otimes_{\Z H}\D$ {is isomorphic to} 
	\[
	H_i(H,\D) \oplus \mathrm{im} (\partial_{i+1} \otimes_{\Z H} \mathrm{id}_\D) \oplus \mathrm{im} (\partial_i \otimes_{\Z H} \mathrm{id}_\D),
	\]
	{where  $\mathrm{im} (\partial_i \otimes_{\Z H} \mathrm{id}_\D)$ is identified with a $\D$-submodule complementary to $\ker(\partial_i\otimes_{\Z H}\mathrm{id}_{\D})$}. Applying $\hom_\D$ gives us a similar decomposition, and taking homology returns $\hom_\D\big(H_i(H, \D),\D\big)$ in dimension $i$, as claimed.
\end{proof}

Later we will need the following computation.

\begin{lemma}
	\label{L2 Betti of Fn}
	Let $F_n$ denote the free group of rank $n>0$, let $R$ be a ring and let $\D$ be a skew-field containing $RF_n$. We have
	\[
	b_i^{\D}(F_n) = \left\{ \begin{array}{cl}
	n-1 & \textrm{if } $i=1$, \\
	0 & \textrm{otherwise.}
	\end{array} \right.
	\]
\end{lemma}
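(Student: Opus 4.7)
The plan is to use the fact that $F_n$ has cohomological dimension one, so it admits a free resolution of $R$ over $RF_n$ of length one, namely the augmentation resolution
\[
0 \to (RF_n)^n \xrightarrow{d_1} RF_n \xrightarrow{\varepsilon} R \to 0,
\]
where $d_1$ sends the $i$th basis vector to $x_i - 1$ (with $x_1, \dots, x_n$ a free basis of $F_n$). Tensoring this resolution with $\mathcal{D}$ over $RF_n$ via the agrarian embedding produces the chain complex
\[
0 \to \mathcal{D}^n \xrightarrow{\partial} \mathcal{D} \to 0
\]
whose $i$th homology is $H_i(F_n, \mathcal{D})$ by definition.

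The immediate consequence is that $b_i^{\mathcal{D}}(F_n) = 0$ for $i \geq 2$, since the complex is concentrated in degrees $0$ and $1$. For the remaining values, I would analyse the boundary map $\partial$, which acts by $(d_1, \dots, d_n) \mapsto \sum_i (x_i - 1) d_i$ (or the right-sided analogue, depending on convention). Since the embedding $RF_n \hookrightarrow \mathcal{D}$ is injective and $x_i \neq 1$ in $F_n$, each element $x_i - 1$ is nonzero in $\mathcal{D}$, hence a unit because $\mathcal{D}$ is a skew-field. Already the restriction of $\partial$ to a single coordinate $\mathcal{D} \to \mathcal{D}$ is therefore surjective, so $\partial$ itself is surjective; this yields $H_0(F_n, \mathcal{D}) = 0$ and hence $b_0^{\mathcal{D}}(F_n) = 0$.

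Finally, for $b_1^{\mathcal{D}}(F_n)$ I would use the fact that the image of $\partial$ has $\mathcal{D}$-dimension equal to $1$ (as we just saw $\partial$ is surjective onto the one-dimensional $\mathcal{D}$). The rank-nullity theorem for vector spaces over the skew-field $\mathcal{D}$ then gives
\[
\dim_{\mathcal{D}} \ker \partial = \dim_{\mathcal{D}} \mathcal{D}^n - \dim_{\mathcal{D}} \mathrm{im}\, \partial = n - 1,
\]
so $H_1(F_n, \mathcal{D}) \cong \mathcal{D}^{n-1}$ and $b_1^{\mathcal{D}}(F_n) = n - 1$.

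There is essentially no obstacle here beyond being careful with left/right module conventions: the entire argument rests on the length-one resolution together with the observation that $x_i - 1$ becomes a unit in the skew-field $\mathcal{D}$. No deeper input about the agrarian theory is needed, which fits the role of this lemma as a basic computational tool used later in the paper.
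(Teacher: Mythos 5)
Your proof is correct and follows essentially the same route as the paper: both use the length-one free resolution coming from the Cayley graph (augmentation ideal) of $F_n$, tensor with $\D$, and exploit that $g_1-1$ becomes a unit in the skew-field to see the boundary map is surjective with $(n-1)$-dimensional kernel. The only cosmetic difference is that the paper phrases the last step as a change of basis turning the map into $(1,0,\dots,0)$, whereas you invoke rank--nullity over $\D$; these are the same computation.
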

\begin{proof}
	Let $\{g_1, \dots, g_n \}$ be a free generating set of $F_n$, and consider the cellular chain complex over $R$ of the Cayley graph of $F_n$ with respect to this set as the following chain complex of free $R F_n$-modules:
	\[
	{0 \to }(R F_n)^n \to R F_n{\to 0}.
	\]
	Picking the obvious bases allows us to identify the boundary map as left-multiplication with $(1-g_1, \dots, 1-g_n)$. Since $1-g_1$ is non-zero in $RF_n$, it is also non-zero in $\D$, and hence it is invertible therein. Hence after tensoring the chain complex with $\D$ over $RF_n$, we may easily change the basis of the $1$-chains over $\D$ and transform the boundary map to being left-multiplication with  $(1, 0 , \dots, 0)$. The result follows immediately.
\end{proof}

The key examples of $\D$ will be the Linnell skew-field $\D_{\Q G}$ and the Jaikin-Zapirain skew-fields $\D_{\F_p G}$. Let us discuss them in some detail.

Linnell introduced his skew-field to resolve the strong Atiyah conjecture for free groups \cite{linnell_93}. It is defined to be the division closure of $\mathbb Q G$ inside the algebra of operators affiliated to the group von Neumann algebra of $G$, and it happens to be a skew-field precisely when $G$ is torsion-free and satisfies the strong Atiyah conjecture \cite[Lemma 10.39]{luck_02}. Moreover, when it is a skew-field, the corresponding agrarian Betti numbers coincide with the $L^2$-Betti numbers. For more information we refer the reader to \cite[Section 10]{luck_02}. In general, throughout the paper we will use the combination of torsion-freeness and the strong Atiyah conjecture to guarantee the existence of $\D_{\Q G}$, and hence it will not be important what the strong Atiyah conjecture actually says.

Agol in his work on the Virtually Fibred Conjecture of Thurston introduced the class of \emph{RFRS} groups \cite[Definition 2.1]{Agol2008}. We will not need the precise definition, since we will treat the concept as a black box. Crucially for us however, RFRS groups are torsion-free and satisfy the strong Atiyah conjecture, see \cite[Proposition 4.2]{kielak_20}for details. Therefore the previous discussion applies to RFRS groups.

Let $\F$ be a skew-field and let $G$ be a RFRS group.  Jaikin-Zapi\-rain~\cite{JaikinZapirain2021} defined a skew-field $\D_{\F G}$ such that $G$ is $\D_{\F G}$-agrarian over $\F$. When $\F = \mathbb Q$, then $\D_{\mathbb Q G}$ coincides with the Linnell skew-field, and hence there is no clash of notation. Moreover, the agrarian $\D_{\F G}$-Betti numbers are multiplicative with respect to finite-index subgroups \cite[Lemma 6.3]{fisher_21}, in some cases satisfy an approximation property \`a la L\"uck \cite[Conjecture A and Theorem B]{Fisheretal2022}, and play a crucial role in controlling virtual algebraic fibring \cite[Theorem B]{fisher_21}, which makes them seem rather analogous to $L^2$-Betti numbers. In particular,  when $\F$ is the finite field $\F_p$, the reader is invited to think of  the agrarian $\D_{\F_p G}$-Betti numbers as mod $p$ variants of $L^2$-Betti numbers.

\smallskip
Let $G$ be a torsion-free group satisfying the strong Atiyah conjecture that splits as a semi-direct product $G = N \rtimes \Z$, and  let $t \in G$ be a generator of the factor $\Z$.
The subgroup $N$ also satisfies the strong Atiyah conjecture \cite[Lemma 10.4]{luck_02}. Moreover, by definition of the Linnell skew-field, we also have that $\D_{\Q N}$ is a sub-skew-field of $\D_{\Q G}$, see \cite[Section 10]{luck_02}.

 The group ring $\Q G$ is now isomorphic with the twisted Laurent polynomial ring $(\Q N)[t^{\pm 1}]$, where the twisting is induced by the conjugation action of $t$ on $N$. 
The conjugation action of $t$ extends to an action on $\D_{\Q N}$, and hence
we may form a twisted Laurent polynomial ring $\D_{\Q N}[t^{\pm 1}]$. This ring embeds into its ring of fractions (Ore localisation) $\mathrm{Ore}(\D_{\Q N}[t^{\pm 1}])$, which is nothing other than the ring of twisted rational functions in $t$ with coefficients in $\D_{\Q N}$. By \cite{luck_02}*{Lemma 10.69}, the rings $\D_{\Q G}$ and $\mathrm{Ore}\big(\D_{\Q N}[t^{\pm1}]\big)$ are isomorphic in a way commuting with the natural embedding of $\D_{\Q N}[t^{\pm1}]$.

Using the same twisting as in the construction of $\D_{\Q N}[t^{\pm1}]$, we may form the ring of twisted {formal} Laurent series $\D_{\Q N}[t^{\pm 1}\rrbracket$. {Recall that the elements of $\D_{\Q N}[t^{\pm 1}\rrbracket$ are the formal Laurent series $\sum_{i = n}^{\infty}d_it^i$ with $d_i\in \D_{\Q N}$ and $n\in \Z$}. Since $\D_{\Q N}$ is a skew-field, so is $\D_{\Q N}[t^{\pm 1}\rrbracket$.

\begin{lemma}
	\label{Laurent series}
	The natural embedding $\D_{\Q N}[t^{\pm 1}] \to \D_{\Q N}[t^{\pm 1}\rrbracket$ extends to an embedding $\D_{\Q G} \to \D_{\Q N}[t^{\pm 1}\rrbracket$.
\end{lemma}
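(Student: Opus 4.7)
The plan is to invoke the universal property of the Ore localisation, using the fact already recorded in the excerpt that $\D_{\Q G} \isom \mathrm{Ore}\bigl(\D_{\Q N}[t^{\pm 1}]\bigr)$ in a manner compatible with the natural embedding of the twisted Laurent polynomial ring. Concretely, to produce the desired extension it suffices to verify that every non-zero element of $\D_{\Q N}[t^{\pm 1}]$ becomes invertible under the natural map into $\D_{\Q N}[t^{\pm 1}\rrbracket$.

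First I would record that the Laurent polynomial ring $\D_{\Q N}[t^{\pm 1}]$ is a (two-sided) Ore domain: the coefficient ring $\D_{\Q N}$ is a skew-field and the twisting is by a ring automorphism, so this is a twisted principal ideal domain in the usual sense. Its classical ring of quotients is precisely $\mathrm{Ore}\bigl(\D_{\Q N}[t^{\pm 1}]\bigr)$, and inverting the set of non-zero elements is what defines this Ore localisation. Second, as noted immediately before the lemma statement, $\D_{\Q N}[t^{\pm 1}\rrbracket$ is itself a skew-field, since the coefficient ring is a skew-field and twisted Laurent series over a skew-field form a skew-field (a non-zero series can be factored as a unit in $\D_{\Q N}$ times $t^k$ times a series with constant term $1$, and the latter is invertible via the geometric series).

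Now the natural map $\iota \colon \D_{\Q N}[t^{\pm 1}] \to \D_{\Q N}[t^{\pm 1}\rrbracket$ is an inclusion of rings, hence injective, so $\iota$ sends every non-zero element to a non-zero element of a skew-field, which is automatically invertible. By the universal property of the Ore localisation, $\iota$ extends uniquely to a ring homomorphism
\[
\widetilde{\iota} \colon \mathrm{Ore}\bigl(\D_{\Q N}[t^{\pm 1}]\bigr) \to \D_{\Q N}[t^{\pm 1}\rrbracket,
\]
and composing with the isomorphism $\D_{\Q G} \isom \mathrm{Ore}\bigl(\D_{\Q N}[t^{\pm 1}]\bigr)$ gives the required map from $\D_{\Q G}$. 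Finally, $\widetilde{\iota}$ is injective because its domain is a skew-field and $\widetilde{\iota}(1) = 1 \neq 0$, so the kernel must be the zero ideal.

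There is no real obstacle: the entire argument is a direct application of the universal property of Ore localisations once one observes that $\D_{\Q N}[t^{\pm 1}\rrbracket$ is a skew-field extending $\D_{\Q N}[t^{\pm 1}]$. The only point requiring minor care is to ensure that the isomorphism $\D_{\Q G} \isom \mathrm{Ore}\bigl(\D_{\Q N}[t^{\pm 1}]\bigr)$ used is the one from \cite{luck_02}*{Lemma 10.69}, which by construction commutes with the natural embedding of $\D_{\Q N}[t^{\pm 1}]$; otherwise the extension produced by the universal property would not be compatible with the embedding stated in the lemma.
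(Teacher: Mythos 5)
Your argument is correct and follows the same route as the paper: identify $\D_{\Q G}$ with $\mathrm{Ore}\bigl(\D_{\Q N}[t^{\pm 1}]\bigr)$ via \cite{luck_02}*{Lemma 10.69} and then embed the Ore localisation into the skew-field $\D_{\Q N}[t^{\pm 1}\rrbracket$; the paper simply cites this last step as a standard property of Ore localisation, whereas you spell out the universal-property argument that justifies it.
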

\begin{proof}
	Since we have already seen that $\D_{\Q N}[t^{\pm1}] \injects \mathrm{Ore}(\D_{\Q N}[t^{\pm 1}])$ extends to $\D_{\Q G} \injects \mathrm{Ore}(\mathcal D_{\Q N}[t^{\pm 1}])$, we need only observe that $\mathrm{Ore}(\mathcal D_{\Q N}[t^{\pm 1}])$ embeds into  $\mathcal D_{\Q N}[t^{\pm 1}\rrbracket$ -- this is a standard property of Ore localisation.
\end{proof}

An entirely analogous discussion can be carried out for the Jaikin-Zapirain skew-fields, using \cite[Proposition 2.2(2)]{JaikinZapirain2021}.

\section{Freiheitssatz}

This section is devoted to the proof of the following agrarian variation on the $L^2$-Freiheitssatz of Peterson--Thom~\cite[Corollary 4.7]{peterson_11} (see also \cite[Theorem 5.1]{Swan1969}).

\begin{theorem}
	\label{freiheit}
Let $G$ be $\D$-agrarian over a ring $R$. Let $\{g_1, \dots, g_n\}$ be a generating set for $G$. There is a subset of the generators $\{h_1, \dots, h_k\}$ such that for every choice of a $k$-tuple of positive integers 
\[\mathbf{m} = (m_1, \dots, m_k),\]
 the subset $\{{h_1}^{m_1}, \dots, {h_k}^{m_k}\}$ freely generates a subgroup $H_\mathbf{m}$ of $G$, and the restriction  map
	\[
	H^1(G, \mathcal{D})\to H^1(H_\mathbf{m}, \mathcal{D})
	\]
	is an isomorphism.
\end{theorem}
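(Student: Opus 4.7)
The plan is to transplant the strategy of Peterson--Thom's $L^2$-Freiheitssatz to the agrarian setting, combining linear algebra over the skew-field $\D$ with the twisted Laurent series embedding of \cref{Laurent series}.

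I would fix a presentation $G = \langle g_1, \ldots, g_n \mid r_\lambda \rangle$ and consider the start of the associated free resolution of $R$ over $RG$,
\[
\bigoplus_\lambda RG \xrightarrow{\partial_2} (RG)^n \xrightarrow{\partial_1} RG,
\]
with $\partial_1(e_i) = 1 - g_i$ and $\partial_2$ given by Fox derivatives of the $r_\lambda$. Tensoring with $\D$ over $RG$ yields a complex of $\D$-vector spaces; after discarding any $g_i = 1$, the image of $\partial_1^\D$ is all of $\D$. Letting $V := \mathrm{im}(\partial_2^\D) \subseteq \D^n$, standard linear algebra over the skew-field $\D$ supplies a subset $J \subseteq \{1, \ldots, n\}$ of size $k := n - \dim_\D V$ such that $V \oplus \D^J = \D^n$, where $\D^J := \bigoplus_{j \in J} \D e_j$. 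Set $\{h_1, \ldots, h_k\} := \{g_j : j \in J\}$.

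The decomposition $V \oplus \D^J = \D^n$ immediately gives $\ker \partial_1^\D = V \oplus (\D^J \cap \ker \partial_1^\D)$. Consequently, the map $F_J \to G$ sending the free generators to the $h_j$ induces an isomorphism on $H_1(-, \D)$, once one uses \cref{L2 Betti of Fn} to identify $H_1(F_J, \D) = \D^J \cap \ker \partial_1^\D$. Both sides are $\D$-vector spaces of dimension $k - 1$, hence finite-dimensional, so \cref{hom is cohom} dualises this to an isomorphism $H^1(G, \D) \cong H^1(F_J, \D)$ that a naturality check identifies with the restriction map.

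The main obstacle is showing that $\phi \colon F_J \to G$ is actually injective, and that the corresponding statement persists when each $h_j$ is replaced by an arbitrary positive power $h_j^{m_j}$. For injectivity I would induct on $k$: at the $j$-th stage, use a cocycle provided by the isomorphism $H^1(G, \D) \cong \D^{k-1}$ to produce a suitable semidirect-product structure and invoke \cref{Laurent series} to embed the relevant subring of $\D$ into a twisted Laurent series ring in which the newly added generator plays the role of $t$; the degree-in-$t$ filtration then precludes any nontrivial relation involving it. Once $H := \langle h_1, \ldots, h_k\rangle$ is known to be free on the $h_j$, the subgroup $H_\mathbf{m}$ is freely generated by its generators by a standard property of free groups, and the cohomology isomorphism for general $\mathbf{m}$ factors as $H^1(G, \D) \to H^1(H, \D) \to H^1(H_\mathbf{m}, \D)$, where the first factor is an isomorphism by the above and the second is an isomorphism by a direct crossed-homomorphism computation (restriction multiplies each coordinate in $Z^1(F_k, \D) = \D^k$ by the unit $1 + h_j + \cdots + h_j^{m_j - 1} \in \D^\times$, compatibly with coboundaries).
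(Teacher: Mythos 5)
Your linear-algebraic selection of the subset $J$ (complementing $\mathrm{im}\,\partial_2^{\D}$ by coordinate directions) and the dimension/duality bookkeeping via \cref{hom is cohom} and \cref{L2 Betti of Fn} are fine, and your treatment of the powers is a genuinely valid alternative to the paper's: once $H=\langle h_1,\dots,h_k\rangle$ is known to be free on the $h_j$, observing that restriction acts on $Z^1(H,\D)\cong\D^k$ by left multiplication by the units $1+h_j+\cdots+h_j^{m_j-1}=(1-h_j)^{-1}(1-h_j^{m_j})$ does give the isomorphism $H^1(H,\D)\to H^1(H_{\mathbf m},\D)$ directly, where the paper instead reuses its free-product criterion.

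However, the step you yourself flag as the main obstacle --- injectivity of $F_J\to G$, i.e.\ that the selected generators freely generate --- is not proved, and the mechanism you sketch does not work in this generality. \cref{Laurent series} is a statement about the Linnell skew-field $\D_{\Q G}$ of a torsion-free group satisfying the strong Atiyah conjecture which comes equipped with a splitting $G=N\rtimes\Z$; in \cref{freiheit} the skew-field $\D$ is arbitrary, no such splitting of $G$ (or of the subgroup $\langle h_1,\dots,h_{j+1}\rangle$, which is exactly what is in question) is available, and there is no reason for $\D$ to embed into a twisted Laurent series ring over a sub-skew-field adapted to the ``new generator playing the role of $t$''. So the ``semidirect-product structure plus degree-in-$t$ filtration'' argument has nothing to run on. The missing ingredient is precisely the paper's \cref{PT lemma} (Peterson--Thom): if a $1$-cocycle $c\colon G\to\D$ vanishes on $H$ and $c(g)\neq 0$, then $\langle H,g\rangle\cong H\ast\langle g\rangle$; its proof is a minimal-length alternating-word argument using only the cocycle identity and the absence of zero divisors in $RG\subseteq\D$, not any Laurent series structure. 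With that lemma in hand your chosen $J$ can indeed be shown to be free (from the surjection $H^1(G,\D)\to H^1(F_J,\D)$ one extracts, for each $j$, a cocycle of $G$ vanishing on $h_1,\dots,h_j$ and nonzero on $h_{j+1}$, and inducts), but as written your proposal omits the central argument of the theorem; note also that the paper avoids fixing $J$ in advance by taking a maximal subset that is simultaneously free and has surjective restriction, enlarging it via \cref{PT lemma} when restriction fails to be injective.
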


The statement directly implies \cref{freiheit_intro}.

The proof follows that of Peterson--Thom very closely, and is based on the following observation they made. {Recall that a map $c\colon G\to \D$ is a \emph{cocycle} if $c(gh) = c(g) + gc(h)$ for all $g, h\in G$. It is \emph{inner} if there is some $d\in \D$ such that $c(g) = (g-1)d$ for all $g\in G$. Finally, recall that $H^1(G, \D)$ is the quotient of the space of cocycles by the inner cocycles.}

\begin{lemma}
	\label{PT lemma}
	Let $G$ be $\D$-agrarian over a ring $R$, and let $H \leqslant G$. If there exists a $1$-co-cycle $c \colon G \to \mathcal D$ that vanishes on $H$, and an element $g \in G$ with $c(g)\neq 0$, then the subgroup $\langle H, g \rangle$ of $G$ splits as a free product $H\ast \langle g \rangle$.
	
	Moreover, if $H$ is non-trivial then $c$ is not an inner $1$-co-cycle.
\end{lemma}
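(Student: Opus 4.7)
The plan is to show by induction on the number $k$ of $\langle g\rangle$-syllables that every non-trivial reduced word $w \in H \ast \langle g \rangle$ maps to a non-zero element under $c$, and hence is non-trivial in $G$. Before starting, observe that $g$ has infinite order: the embedding $RG \hookrightarrow \D$ makes $RG$ a domain, so an element of order $n \geqslant 2$ would yield the zero-divisor equality $(1-g)(1 + g + \cdots + g^{n-1}) = 0$, impossible in a skew-field.

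Writing a reduced word as $w = h_0 g^{n_1} h_1 \cdots g^{n_k} h_k$ with $n_i \neq 0$ and $h_1, \ldots, h_{k-1} \neq 1$, iterating the cocycle identity $c(xy) = c(x) + x \cdot c(y)$ and using $c|_H = 0$ yields
\[
c(w) = \Bigl( \sum_{i=1}^{k} u_i P_{n_i}(g) \Bigr) c(g),
\]
where $u_i := h_0 g^{n_1} h_1 \cdots g^{n_{i-1}} h_{i-1} \in G$ and $P_n(g) \in RG$ is defined by $c(g^n) = P_n(g) c(g)$ — explicitly $P_n(g) = 1 + g + \cdots + g^{n-1}$ for $n > 0$ and an analogous negative expression for $n < 0$, non-zero in $RG$ since $g$ has infinite order. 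Because $c(g) \neq 0$ and $\D$ is a skew-field, the vanishing $c(w) = 0$ forces $\sum_i u_i P_{n_i}(g) = 0$ in $RG$. Read as a signed sum of the group elements $u_i g^j$, this requires a collision $u_i g^j = u_{i'} g^{j'}$ with $(i,j) \neq (i',j')$; infinite order of $g$ rules out collisions within a single $i$, so we may take $i < i'$ and obtain
\[
g^{n_i} h_i g^{n_{i+1}} h_{i+1} \cdots g^{n_{i'-1}} h_{i'-1} = g^{j - j'} \quad \text{in } G.
\]

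The crucial step, where the main obstacle lies, is to convert this equation back into an inductive contradiction without inflating the $g$-count. Left-multiplying both sides by $g^{-n_i}$ gives
\[
h_i g^{n_{i+1}} h_{i+1} \cdots g^{n_{i'-1}} h_{i'-1} g^{-(j - j' - n_i)} = 1 \quad \text{in } G,
\]
a reduced word starting with the non-trivial $h_i$ and containing at most $i' - i \leqslant k - 1$ syllables from $\langle g \rangle$. By the inductive hypothesis this word is non-trivial in $G$ — contradiction. The base cases are immediate: $k = 0$ says $w = h_0 \in H \setminus \{1\}$ is non-trivial in $G$ tautologically, and $k = 1$ gives $c(w) = h_0 P_{n_1}(g) c(g) \neq 0$ directly. (Note that the alternative approach of appending $g^{-(j-j')}$ to the right of the full sub-word would leave the $g$-count at $k$, which is why the left-multiplication is essential.)

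For the final statement, suppose $c$ were the inner cocycle $c(x) = x d - d$ for some $d \in \D$. Then $c|_H = 0$ forces $(h-1)d = 0$ for every $h \in H$. Non-triviality of $H$ provides some $h \neq 1$; the element $h - 1$ is non-zero in $RG \hookrightarrow \D$, so the skew-field property forces $d = 0$, whence $c \equiv 0$, contradicting $c(g) \neq 0$.
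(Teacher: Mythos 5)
Your proof is correct and follows essentially the same route as the paper's (the Peterson--Thom argument): evaluate the cocycle on a purported trivial reduced word, factor out a nonzero element of $\D$, deduce that an element of $RG$ vanishes, extract a collision of group elements, and descend to a shorter trivial word, with the "moreover" clause handled identically. The only deviations are cosmetic and harmless: you expand $c(g^n)$ telescopically instead of invoking $H^1(\Z,\D)=0$ to write $c(g^i)=(g^i-1)d$, and you replace the minimal-length counterexample by an induction on the number of $g$-syllables (your opening claim that $c(w)\neq 0$ for \emph{every} non-trivial reduced word is literally false for words lying in $H$, but the induction you actually run --- non-triviality in $G$, with $c(w)\neq 0$ once a $g$-syllable occurs --- is the correct one, and your care that the descent does not inflate the $g$-count makes explicit a point the paper leaves to the reader).
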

\begin{proof}
	Since the group ring $R G$ embeds into a skew-field, it cannot contain any non-trivial zero-divisors, and hence $G$ must be torsion-free. 
	As $c(g) \neq 0$, the element $g$ is non-trivial, and hence of infinite order.
	Thus $\langle g\rangle$ is isomorphic to $\Z$, and so $c$ is inner on $\langle g\rangle$, since $H^1(\Z,\D) \cong H_1(\Z,\D) = 0$ by \cref{hom is cohom,L2 Betti of Fn}.
	
	Suppose for a contradiction that $H$ and $\langle g \rangle$ do not form a free product. Then there is a minimal-length alternating word 
	\[
	h_1g^{j_1}\cdots h_mg^{j_m}
	\]
	that is trivial, but such that $h_i\neq 1$, $g^{j_i}\neq 1$ for all $i$. Conjugating the word in a suitable way shows that there is also no shorter alternating word with $h_1 = 1$ or $j_m=0$ that is trivial. 
	
As we have shown above,	 $c(g^i) = (g^i-1)d$ for some $d \in \D$ and for all $i$. Observe that $c(g) \neq 0$ and so $d \neq 0$, as $\mathcal D$ is a skew-field. We compute
	\begin{align*}
	0 &= c(h_1g^{j_1}\cdots h_mg^{j_m}) \\
	&= h_1c(g^{j_1}) + {h_1g^{j_1}h_2c(g^{j_2}) +}\dots + h_1g^{j_1}\cdots h_mc(g^{j_m}) \\
	&= \big(h_1(g^{j_1} - 1) + {h_1g^{j_1}h_2(g^{j_2}-1) +}\dots + h_1g^{j_1}\cdots h_m(g^{j_m}-1)\big)d.
	\end{align*}
	As $\mathcal{D}$ does not contain non-trivial zero-divisors, we must have
	\[
	h_1(g^{j_1} - 1) + \dots + h_1g^{j_1}\cdots h_m(g^{j_m}-1) = 0.
	\]
	Moreover, the above equation holds in the group ring $R G$, since $R G$ embeds into $\mathcal D$. 
	We conclude that at least two of the group elements appearing in the last equation must coincide; since all these words are prefixes of the word we started with, there must be a word of shorter length over $H$ and $g$ that is trivial, yielding a contradiction.
	
	Finally, assume that $H$ is non-trivial. If $c$ were inner, then we would have $c \colon h \mapsto (h-1)d$ for all $h \in H \ast \langle g \rangle$ and some $d \in \mathcal D$. Take a non-trivial $h \in H$. We have $0 = c(h) = (h-1)d$, and so $d = 0$. But then $c = 0$, a contradiction.
\end{proof}

\begin{proof}[Proof of \cref{freiheit}]
If the group $G$ is trivial then we take $k=0$ and the result is obvious. In what follows we will assume that not all of the generators $g_1, \dots, g_n$ are trivial.	
	
Consider subsets $S$ of $\{g_1, \dots, g_n\}$ with the following properties: the subset $S$ generates  a subgroup of $G$ freely, and the restriction map
		\[
	H^1(G, \D)\to H^1(\langle S \rangle, \D)
	\]
	is surjective. 
	Every subset $\{g_i\}$ with $g_i$ non-trivial satisfies the requirements, as $G$ is torsion-free and therefore $\langle g_i \rangle \cong \Z$, whose agrarian Betti numbers all vanish by \cref{L2 Betti of Fn}. Hence there is clearly a maximal subset satisfying the requirements; without loss of generality let it be $\{g_1, \dots, g_k \}$, and let $H$ denote the group it freely generates.
	
	We now claim that the restriction map
	\[
	r \colon H^1(G, \D)\to H^1(H, \D)
	\]
	is an isomorphism. If it is not the case, then there exists a {non-inner $1$-co-cycle $c \colon G \to \D$ that is inner} on $H$. {By possibly replacing $c$ with $c - c'$, where $c'$ is an appropriately chosen inner cocycle, we may assume that $c$ actually vanishes on $H$.} Since $c$ is non-zero, it does not vanish on at least one element of $\{g_1, \dots, g_n\}$, say $g_{k+1}$. The co-cycle $c$ restricts to a non-zero $1$-co-cycle $c'$ on $\langle H, g_{k+1}\rangle$. Since $R \langle H, g_{k+1}\rangle$ embeds into $\D$, as $R G$ does, we conclude using \cref{PT lemma} that $H' = \langle H, g_{k+1}\rangle$ is freely generated by $\{g_1, \dots, g_{k+1}\}$. Moreover, the $1$-co-cycle $c'$ is not inner, and hence spans a $1$-dimensional $\D$-submodule of $H^1(H', \D)$ that lies in the kernel of the restriction map
	\[
	r' \colon H^1(H', \D) \to H^1( H, \D).
	\]
	Since $r$ is onto, so is $r'$, as $r$ factors through $r'$. By \cref{L2 Betti of Fn}, we also know the agrarian Betti numbers of $H'$ and $H$, as these are finitely generated free groups. We conclude that the kernel of $r'$ is precisely the $\D$-span of $c'$. Therefore the restriction map
	\[
	H^1(G, \D)\to H^1(H', \D)
	\]
	is onto, since its image contains $c'$ and is taken by $r'$ onto $H^1(H,\D)$.
	This contradicts the maximality of the generating set of $H$, and proves our claim that $r$ is an isomorphism.
	
	\smallskip
	We now need to investigate what happens when we are presented with a tuple $\mathbf{m}$. We let $h_i = g_i$ for $i \leqslant k$, and hence $H_{(1,\dots,1)}$ coincides with our $H$. It is clear that $H_\mathbf{m}$ is always a free group of rank $k$, and hence we are left with proving that the restriction map
		\[
	r_\mathbf m \colon H^1(H, \D)\to H^1(H_\mathbf m, \D)
	\]
	is an isomorphism. Since the dimensions of both $\D$-modules are equal by \cref{L2 Betti of Fn}, as $H \cong H_\mathbf m$, it is enough to show that $r_\mathbf m$ is injective.
	
	If it is not, then there exists a non-trivial $1$-cocycle $d \colon H \to \D$ that vanishes on $H_\mathbf m$ and is not zero on some $h_i$. But then \cref{PT lemma} tells us that $h_i$ and $H_\mathbf m$ form a free product, which is not true since ${h_i}^{m_i} \in H$. This finishes the proof.
\end{proof}

\section{\texorpdfstring{$L^2$}{L\texttwosuperior}-independent hierarchies}
\label{section_4}

From now on, we focus on $L^2$-homology. We will indicate throughout what is missing in the agrarian picture.

Let $G$ be a torsion-free group satisfying the strong Atiyah conjecture, and recall that $\D_{\Q G}$ denotes the Linnell skew-field. We say that a subgroup $H\leqslant G$ is \emph{$L^2$-independent} (in $G$) if the restriction map
\[
H^1(G, \D_{\Q G})\to H^1(H, \D_{\Q G})
\]
is surjective. This definition is a modification of the one given in \cite{antolin_22}: Antolin and Jaikin-Zapirain require the co-restriction map $H_1\big(H, \mathcal U(G)\big)\to H_1\big(G, \mathcal U(G)\big)$ to be injective, where $\mathcal U(G)$ denotes the algebra of operators affiliated to the group von Neumann algebra of $G$. When $G$ is torsion-free and satisfies the strong Atiyah conjecture, the definitions coincide.

\begin{example}
\label{independent_factor}
Free factors of free groups are $L^2$-independent. More generally, retracts of free groups (and surface groups) are $L^2$-independent \cite[Proposition 4.6]{antolin_22}.
\end{example}

\begin{example}
\label{magnus_independent}
Let $G = \langle \Sigma \mid w\rangle$ be a one-relator group with $w$ cyclically reduced and not a proper power. By \cite[Corollary 1.3]{jaikin_20} of Jaikin-Zapirain--L\'opez-\'Alvarez, the strong Atiyah conjecture holds for one-relator groups without torsion. A \emph{Magnus subgroup} of $G$ is a subgroup generated by a proper subset $S\subset \Sigma$, excluding at least one generator that is mentioned in $w$. The classical Freiheitssatz of Magnus \cite{magnus_30} tells us that Magnus subgroups of one-relator groups are free. We claim that Magnus subgroups of $G$ are also $L^2$-independent.

Let $S\subset \Sigma$ be a subset, omitting a single generator $g$ that appears in $w$. Since we cannot have $G\isom \langle g\rangle*\langle S\rangle$, \cref{PT lemma} implies that there is no $1$-co-cycle $c\colon G\to \D_{\Q G}$ that vanishes on $\langle S\rangle$ and such that $c(g)\neq 0$. This implies that every $1$-co-cycle that vanishes on $\langle S\rangle$, vanishes on all of $G$. Thus, $H^1(G, \D_{\Q G})\to H^1(\langle S\rangle, \D_{\Q G})$ is injective. By \cite{dicks_07}, we have $b_1^{(2)}(G) = b_1^{(2)}(\langle S\rangle)$ and so by dimension considerations, $H^1(G, \D_{\Q G})\to H^1(\langle S\rangle, \D_{\Q G})$ is an isomorphism. In particular, $\langle S\rangle$ is $L^2$-independent. Coupled with \cref{independent_factor}, we see that all Magnus subgroups are $L^2$-independent.
\end{example}

\begin{definition}
Let $\mathcal{G}_0$ denote the class of free groups. Let $\mathcal{G}_i$ denote the class of groups $G$ satisfying the strong Atiyah conjecture, such that $G \isom A*_C$ or $G\isom A*_CB$ where $A, B\in \mathcal{G}_{i-1}$, $C$ is free, and such that $C$ is $L^2$-independent in $A$. For $n\geqslant 0$, we say that a group $G\in \mathcal{G}_n$ has an \emph{$L^2$-independent hierarchy of length $n$}. If $G\in \mathcal{G}_n$ for some $n$, then $G$ has an \emph{$L^2$-independent hierarchy}.
\end{definition}

\begin{example}
An \emph{elementary hierarchy}, as defined by Ha\-gen--Wise in \cite{hagen_10}, is a hierarchy as above in which the associated groups $C$ are always trivial or infinite cyclic. We claim that any such hierarchy is an $L^2$-independent hierarchy.

We first show that groups with an elementary hierarchy are locally indicable. It will then follow from \cite[Theorem 1.1]{jaikin_20} that they all satisfy the strong Atiyah conjecture. Firstly, note that free products of locally indicable groups are locally indicable. In particular, free groups are locally indicable. If $A$ and $B$ are locally indicable groups, then for all non-trivial elements $a\in A$, $b\in B$, the group $A*B/\normal{ab}$ is locally indicable by a result of Howie~\cite[Theorem 4.2]{howie_82}. So local indicability is closed under amalgamated free products over infinite cyclic subgroups. If $A$ is locally indicable, it follows from the same result that for all non-trivial elements $a_1, a_2\in A$, the group $A*\langle t\rangle/\normal{t^{-1}a_1ta_2}$ is locally indicable. So local indicability is closed under HNN-extensions over infinite cyclic subgroups. By induction on hierarchy length, we see that any group with an elementary hierarchy is locally indicable and so satisfies the strong Atiyah conjecture.

As $b_1^{(2)}(\Z) = b_1^{(2)}(1) = 0$ by \cref{L2 Betti of Fn} and a direct calculation, an elementary hierarchy is always an $L^2$-independent hierarchy.
\end{example}

\begin{example}
\label{limit_groups}
Limit groups that do not contain $\Z^3$ admit an elementary hierarchy, as explained in \cite{hagen_10}, and thus admit $L^2$-independent hierarchies.
\end{example}

\begin{example}
\label{ascending_example}
The main result of \cite{feighn_99} by Feighn--Handel states that a finitely generated ascending HNN-extension of a free group $G$ has a presentation of the form
\[
G\isom \langle A, B, t \mid t^{-1}at = \psi(a), a\in A\rangle 
\]
where $A$, $B$ are finite sets and $\psi\colon \langle A\rangle\to \langle A, B\rangle$ is a monomorphism of free groups. Now $G$ satisfies the strong Atiyah conjecture by Linnell's Theorem \cite[Theorem 10.19]{luck_02}. Thus, if $G$ is a finitely generated ascending HNN-extension of a free group, then $G$ has an $L^2$-independent hierarchy of length one by \cref{independent_factor}.
\end{example}

\begin{example}
Let $G$ be a torsion-free one-relator group. By \cite[Theorem 5.2]{linton_22}, there exists a sequence of subgroups 
\[G_0\leqslant G_1\leqslant \dots\leqslant G_n = G\]
 such that $G_0$ is a free group, $G_i$ is a one-relator group and $G_i = G_{i-1}*_{C_{i-1}}$ for all $i\geqslant 1$, where $C_{i-1}$ is a Magnus subgroup of $G_{i-1}$. It now follows from \cref{magnus_independent} that torsion-free one-relator groups admit $L^2$-independent hierarchies.
\end{example}

\begin{proposition}
\label{independent}
Let $G$ be a torsion-free group satisfying the strong Atiyah conjecture. Suppose that $G\isom A*_{C}$, with $A$ and $C$ finitely generated. If $C$ is $L^2$-independent in $A$, then the restriction map
\[
H^2(G, \D_{\Q G})\to H^2(A, \D_{\Q G})
\]
is injective.
\end{proposition}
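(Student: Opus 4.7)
The plan is to apply the Mayer--Vietoris long exact sequence for the HNN extension $G = A*_C$, combined with faithful-flatness base change along the sub-skew-field inclusion $\D_{\Q A} \hookrightarrow \D_{\Q G}$. The Bass--Serre tree of $G$ yields the short exact sequence of $\Z G$-modules
\[
0 \to \Z G \otimes_{\Z C} \Z \to \Z G \otimes_{\Z A} \Z \to \Z \to 0,
\]
and applying $\mathrm{Ext}^\bullet_{\Z G}(-, \D_{\Q G})$ together with Shapiro's lemma produces
\[
\dots \to H^1(A, \D_{\Q G}) \xrightarrow{\delta} H^1(C, \D_{\Q G}) \to H^2(G, \D_{\Q G}) \to H^2(A, \D_{\Q G}) \to \dots,
\]
so the desired injectivity is equivalent to surjectivity of $\delta$.

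Next I would carry out the base change. Since $\D_{\Q A}$ sits as a sub-skew-field of $\D_{\Q G}$, the latter is a free---hence faithfully flat---right $\D_{\Q A}$-module. For $H \in \{A, C\}$, picking a free $\Q H$-resolution $P_\bullet \to \Q$ and using
\[
P_\bullet \otimes_{\Q H} \D_{\Q G} \cong (P_\bullet \otimes_{\Q H} \D_{\Q A}) \otimes_{\D_{\Q A}} \D_{\Q G},
\]
flatness commutes with homology to give $H_i(H, \D_{\Q G}) \cong H_i(H, \D_{\Q A}) \otimes_{\D_{\Q A}} \D_{\Q G}$; combined with \cref{hom is cohom} (using that $C$ is free of finite rank, so $H_1(C, \D_{\Q A})$ is finite-dimensional) the identity extends to $H^1$. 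Under these identifications, the $L^2$-independence of $C$ in $A$ promotes to surjectivity of the restriction $H^1(A, \D_{\Q G}) \to H^1(C, \D_{\Q G})$ along the inclusion $\varphi_0\colon C \hookrightarrow A$.

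The main obstacle is that, for an HNN extension, $\delta$ is not this single restriction but a twisted difference $\varphi_0^\ast - \tau_t \circ \varphi_1^\ast$, with $\varphi_1$ the second inclusion of $C$ into $A$ and $\tau_t$ induced by the action of the stable letter $t$ on $\D_{\Q G}$. To bridge this I would use the Laurent-series embedding of \cref{Laurent series}: letting $K = \ker(G \twoheadrightarrow \Z)$ for the quotient sending $A$ to $0$ and $t$ to $1$, one has $G = K \rtimes \Z$ and $\D_{\Q G} \hookrightarrow \D_{\Q K}[t^{\pm 1}\rrbracket$, inside which $\tau_t$ is realized by multiplication by $t$. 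Given $\beta \in H^1(C, \D_{\Q G})$, iteratively produce classes $\alpha_n \in H^1(A, \D_{\Q G})$ using surjectivity of $\varphi_0^\ast$, each $\alpha_n$ carrying a factor of $t^n$, so that the formal sum $\sum_n \alpha_n$ converges in the Laurent-series completion to a solution of $\delta\alpha = \beta$. The crux is then verifying that the resulting class actually lies in the uncompleted $H^1(A, \D_{\Q G})$ rather than merely in its Laurent-series completion; this is where I expect the real work of the argument to be concentrated.
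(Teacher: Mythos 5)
Your reduction coincides with the paper's: Bieri's Mayer--Vietoris sequence turns the claim into surjectivity of the map $H^1(A,\D_{\Q G})\to H^1(C,\D_{\Q G})$, which after base change becomes a twisted difference $X-tY$ of two restriction matrices, and the paper likewise invokes the Laurent-series embedding of \cref{Laurent series}. But the step you explicitly leave open --- checking that your term-by-term series solution $\sum_n \alpha_n$ lies in $H^1(A,\D_{\Q G})$ rather than only in its Laurent-series completion --- is the actual content of the proposition, and there is no apparent direct argument for it; as written, this is a genuine gap, not a routine verification. A smaller issue: to make both matrices $t$-free you should work over $\D_{\Q N}$, where $N$ is the normal closure of $A$ in $G$ (your $K$), rather than over $\D_{\Q A}$, since $C^t$ lies in $N$ but not in $A$; only then does the Mayer--Vietoris map literally take the form $X-tY$ with $X,Y$ matrices over a sub-skew-field on which conjugation by $t$ acts.

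The paper closes the argument without ever descending a series, by a square-matrix trick. Since $X$ is surjective, one may choose a subset of the basis of $H^1(A,\D_{\Q G})$ on which $X$ restricts to a surjective \emph{square} matrix $X'$ over $\D_{\Q N}$; let $Y'$ be the corresponding restriction of $Y$ and $Z=(X')^{-1}$. Then $(X'-tY')Z=\mathrm{I}-tY'Z$ is invertible over $\D_{\Q N}[t^{\pm 1}\rrbracket$ with inverse the geometric series $\sum_{i\geqslant 0}(tY'Z)^i$, so $X'-tY'$ is injective on vectors over the subring $\D_{\Q G}$. Being an injective square matrix over the skew-field $\D_{\Q G}$, it is an isomorphism, hence surjective, and therefore $X-tY$ is surjective over $\D_{\Q G}$ as well, which kills the connecting map and gives the desired injectivity on $H^2$. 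In other words, the completion is used only to prove \emph{injectivity}, and skew-field linear algebra (square and injective implies invertible) converts that into the surjectivity you need, so the convergence-in-$\D_{\Q G}$ question you flag never has to be answered. If you want to salvage your write-up, replace the iterative construction of $\sum_n\alpha_n$ by this reduction to a square submatrix.
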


\begin{proof}
We are going to use the Mayer--Vietoris sequence for HNN-extensions in group cohomology due to Bieri~\cite{Bieri1975}.  We get a long exact sequence containing
\[
\begin{tikzcd}
	& H^1(A, \D_{\Q G})\arrow[r]\arrow[d, phantom, ""{coordinate, name=Z}]& H^1(C, \D_{\Q G}) \arrow[dll,  rounded corners, to path={ -- ([xshift=2ex]\tikztostart.east)|- (Z) [near end]\tikztonodes-| ([xshift=-2ex]\tikztotarget.west)-- (\tikztotarget)}] \\
	H^2(G, \D_{\Q G})\arrow[r]& H^2(A, \D_{\Q G}).& 
\end{tikzcd}
\]
Let $N$ denote the normal closure of $A$ in $G$, and observe that $G = N \rtimes \Z$, with the factor $\Z$ being generated by $t$, the stable letter of the HNN-extension. Recall that the Linnell skew-field $\D_{\Q N}$ of $N$ is a sub-skew-field of $\D_{\Q G}$ containing $\Q N$, and hence also $\Q A$ and $\Q C$.
Using finite generation of $A$, \cref{hom is cohom}, and the fact that $\D_{\Q G}$ is a flat $\D_{\Q N}$-module (since they are both skew-fields), we see that
\begin{align*}
H^1(A, \D_{\Q G}) &\cong H_1(A, \D_{\Q G}) \\
 &\cong  H_1(A, \D_{\Q N}) \otimes_{\D_{\Q N}} \D_{\Q G}\\
 &\cong  H^1(A, \D_{\Q N}) \otimes_{\D_{\Q N}} \D_{\Q G}
\end{align*}
as $\D_{\Q G}$-modules, and similarly 
\[
H^1(C, \D_{\Q G}) \cong H^1(C, \D_{\Q N}) \otimes_{\D_{\Q N}} \D_{\Q G}.
\]
 We may therefore pick suitable bases for $H^1(A, \D_{\Q G})$ and $H^1(C, \D_{\Q G})$ in such a way that the restriction map $X$ becomes a matrix over $\D_{\Q N}$. It is now immediate that the restriction map 
 \[
 Y \colon H^1(A, \D_{\Q G}) \to H^1(C^t, \D_{\Q G})
 \]
  is also given by a matrix over $\D_{\Q N}$, where $C^t$ denotes the conjugate of $C$ by $t$. The first map of our exact sequence above is now $X - tY$.
  
The map $X$ is surjective by assumption. We now find a subset of the previously chosen basis of $H^1(A, \D_{\Q G})$, such that the restriction $X'$ of $X$ to the $\D_{\Q G}$-module spanned by this subset is a surjective square matrix. Let $Y'$ denote $Y$ restricted to the same submodule.

We are now looking at $X' -tY'$, with $X'$ and $Y'$ matrices over $\D_{\Q N}$. Since $X'$ is a surjective square matrix, it is invertible over $\D_{\Q N}$, with inverse $Z$. Now the matrix $(X' -tY')Z = \mathrm I - tY'Z$ is invertible over $\D_{\Q N}[t^{\pm 1} \rrbracket$, with inverse $\sum_{i=0}^\infty (tY'Z)^i$. Thus, the matrix $X' - tY'$ is injective over $\D_{\Q G}$, since \cref{Laurent series} tells us that $\D_{\Q G}$ is a subring of  $\D_{\Q N}[t^{\pm 1} \rrbracket$. But this means that $X' - tY'$ is an isomorphism over $\D_{\Q G}$, and hence a surjection. This finally implies that $X -tY$ is surjective over $\D_{\Q G}$ as well.

Let us return to the Mayer--Vietoris sequence.
As the first map is surjective, the second one is zero, forcing the last one to be injective, as claimed.
\end{proof}

\begin{proposition}
\label{independent_2}
Let $G$ be a torsion-free group satisfying the strong Atiyah conjecture. Suppose that $G\isom A*_CB$. If $C$ is $L^2$-independent in $A$, then the sum of the restriction maps
\[
H^2(G, \D_{\Q G})\to H^2(A, \D_{\Q G})\oplus H^2(B, \D_{\Q G})
\]
is injective.
\end{proposition}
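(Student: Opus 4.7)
The plan is to mirror the proof of \cref{independent}, replacing Bieri's Mayer--Vietoris sequence for HNN-extensions by the analogous one for amalgamated free products. Applied to $G \cong A \ast_C B$ with coefficients in $\D_{\Q G}$, this yields a long exact sequence containing
\[
H^1(A, \D_{\Q G}) \oplus H^1(B, \D_{\Q G}) \xrightarrow{\Phi} H^1(C, \D_{\Q G}) \to H^2(G, \D_{\Q G}) \to H^2(A, \D_{\Q G}) \oplus H^2(B, \D_{\Q G}),
\]
where $\Phi(\alpha,\beta) = \mathrm{res}^A_C(\alpha) - \mathrm{res}^B_C(\beta)$. By exactness, the desired injectivity follows as soon as $\Phi$ is surjective.

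The amalgamated setting is in fact simpler than the HNN one. In \cref{independent} the first Mayer--Vietoris map had the form $X - tY$, coupling $A$ and its conjugate through the twisting by the stable letter, and surjectivity had to be extracted from $X$ alone via a Neumann-series trick inside the twisted Laurent series ring $\D_{\Q N}[t^{\pm 1} \rrbracket$. Here the two summands $H^1(A, \D_{\Q G})$ and $H^1(B, \D_{\Q G})$ decouple completely in $\Phi$, so it suffices to show that the restriction $H^1(A, \D_{\Q G}) \to H^1(C, \D_{\Q G})$ alone is onto, which forces surjectivity of $\Phi$ regardless of what happens on the $B$-factor.

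The remaining step is to upgrade the hypothesis ``$C$ is $L^2$-independent in $A$'', stated with $\D_{\Q A}$-coefficients, to a surjectivity statement with $\D_{\Q G}$-coefficients. Since $A \hookrightarrow G$, the Linnell skew-field $\D_{\Q A}$ embeds into $\D_{\Q G}$, and the base-change argument from the proof of \cref{independent} applies essentially verbatim: using \cref{hom is cohom} together with finite generation of $A$ and $C$ to identify $H^1$ with the $\D$-dual of $H_1$, and flatness of the skew-field extension $\D_{\Q A} \hookrightarrow \D_{\Q G}$, one obtains natural identifications
\[
H^1(A, \D_{\Q G}) \cong H^1(A, \D_{\Q A}) \otimes_{\D_{\Q A}} \D_{\Q G}, \qquad H^1(C, \D_{\Q G}) \cong H^1(C, \D_{\Q A}) \otimes_{\D_{\Q A}} \D_{\Q G},
\]
so that surjectivity of the restriction over $\D_{\Q A}$ is preserved upon tensoring up to $\D_{\Q G}$. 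The only mildly technical point is this coefficient change; the rest of the argument is purely formal.
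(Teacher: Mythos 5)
Your argument follows the paper's proof essentially verbatim: Swan's Mayer--Vietoris sequence for the amalgam, surjectivity of the first map coming from the $A$-summand alone (so no twisted-Laurent-series trick is needed), and exactness then forces injectivity of $H^2(G,\D_{\Q G})\to H^2(A,\D_{\Q G})\oplus H^2(B,\D_{\Q G})$.

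One caveat on your coefficient-change step: you invoke finite generation of $A$ and $C$, but unlike \cref{independent}, the statement of \cref{independent_2} makes no such assumption (and it is applied later to hierarchies in which the vertex and edge groups need not be finitely generated), so as written your argument only proves a weaker statement. The fix is to avoid base-changing cohomology directly: by \cref{hom is cohom}, surjectivity of the restriction $H^1(A,\D)\to H^1(C,\D)$ over a skew-field $\D$ is equivalent to injectivity of the corestriction $H_1(C,\D)\to H_1(A,\D)$, and homology (unlike cohomology) satisfies untwisted flat base change with no finiteness hypotheses, since $\D_{\Q G}$ is free, hence faithfully flat, over the sub-skew-field $\D_{\Q A}$. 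Thus injectivity of $H_1(C,\D_{\Q A})\to H_1(A,\D_{\Q A})$ passes to $\D_{\Q G}$-coefficients, and dualising back gives surjectivity of $H^1(A,\D_{\Q G})\to H^1(C,\D_{\Q G})$, which is all your Mayer--Vietoris argument needs. With that replacement your proof is complete and in the stated generality.
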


\begin{proof}
	The proof is similar to the previous one, but easier.
Now we are going to use the Mayer--Vietoris sequence for amalgamated free products in group cohomology due to Swan~\cite[Theorem 2.3]{Swan1969}, based on a sketch of Lyndon~\cite{Lyndon1950}. Again, we look at the following exact sequence
\[
\begin{tikzcd}
&H^1(A, \D_{\Q G})\oplus H^1(B, \D_{\Q G})\arrow[r]\arrow[d, phantom, ""{coordinate, name=Z}]& H^1(C, \D_{\Q G}) \arrow[dll,  rounded corners, to path={ -- ([xshift=2ex]\tikztostart.east)|- (Z) [near end]\tikztonodes-| ([xshift=-2ex]\tikztotarget.west)-- (\tikztotarget)}] \\
H^2(G, \D_{\Q G})\arrow[r]& H^2(A, \D_{\Q G})\oplus H^2(B, \D_{\Q G}).&
\end{tikzcd}
\]
The first map  is surjective, since by assumption already the image of $H^1(A, \D_{\Q G})$ is all of $H^1(C, \D_{\Q G})$. {Thus, the second map is the zero map, and so the third map is injective.}
\end{proof}

\begin{theorem}
\label{hierarchy}
Let $G$ be a non-trivial group with an $L^2$-independent hierarchy. Then $b_i^{(2)}(G) = 0$ for all $i\neq 1$ and $\cd_{R}(G) \leqslant 2$ for any commutative ring $R$.
\end{theorem}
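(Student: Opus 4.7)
The plan is to induct on the length $n$ of an $L^2$-independent hierarchy realising $G \in \mathcal{G}_n$. In the base case $G$ is a non-trivial free group, so $\cd_R(G) = 1$ for every commutative ring $R$, and \cref{L2 Betti of Fn} immediately gives $b_i^{(2)}(G) = 0$ for all $i \neq 1$.

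For the inductive step, write $G \isom A \ast_C$ or $G \isom A \ast_C B$ with $A, B \in \mathcal{G}_{n-1}$, $C$ free, and $C$ being $L^2$-independent in $A$. For the cohomological dimension bound, I would feed the inductive bounds $\cd_R(A), \cd_R(B) \leqslant 2$ together with $\cd_R(C) \leqslant 1$ (free groups have cohomological dimension at most $1$ over any commutative ring) into the Mayer--Vietoris long exact sequences of Bieri (HNN case) and Swan (amalgam case) with arbitrary $RG$-coefficients. For $i \geqslant 3$ the two neighbouring terms in the sequence vanish, yielding $H^i(G, -) = 0$ and hence $\cd_R(G) \leqslant 2$.

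For the $L^2$-Betti numbers, vanishing in degrees at least $3$ is immediate from $\cd_{\Q}(G) \leqslant 2$, and $b_0^{(2)}(G) = 0$ because $G$ is non-trivial and torsion-free (being $\D_{\Q G}$-agrarian), hence infinite. The heart of the argument is $b_2^{(2)}(G) = 0$: \cref{independent,independent_2} provide an injection
\[
H^2(G, \D_{\Q G}) \hookrightarrow H^2(A, \D_{\Q G}) \quad \text{(resp.\ } H^2(A, \D_{\Q G}) \oplus H^2(B, \D_{\Q G})\text{),}
\]
so it suffices to show the target vanishes. By \cref{hom is cohom} this reduces to showing $H_2(A, \D_{\Q G}) = 0$ (and analogously for $B$). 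Using the inclusion of Linnell skew-fields $\D_{\Q A} \hookrightarrow \D_{\Q G}$ and the freeness of $\D_{\Q G}$ as a $\D_{\Q A}$-module, a standard base change across a free $\Q A$-resolution of $\Q$ produces
\[
H_2(A, \D_{\Q G}) \isom H_2(A, \D_{\Q A}) \otimes_{\D_{\Q A}} \D_{\Q G},
\]
whose first factor vanishes by the inductive hypothesis $b_2^{(2)}(A) = 0$.

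The main technical point is the base change isomorphism above, which relies on Lück's compatibility of the Linnell construction with subgroups. A secondary concern is that \cref{independent} was stated under a finite generation hypothesis on $A$ and $C$; this can be arranged by restricting to finitely generated hierarchies, which covers every example encountered in this section. Once these are in place the induction closes and the theorem follows.
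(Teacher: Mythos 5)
Your proposal is correct and follows essentially the same route as the paper: induction on hierarchy length, the Bieri and Swan Mayer--Vietoris sequences for the bound $\cd_R(G)\leqslant 2$, and \cref{independent,independent_2} together with extension of scalars from $\D_{\Q A}$ to $\D_{\Q G}$ to kill $b_2^{(2)}(G)$. The only cosmetic differences are that you deduce vanishing of $b_i^{(2)}(G)$ for $i\geqslant 3$ from the cohomological dimension bound rather than directly from the Mayer--Vietoris sequence with $\D_{\Q G}$-coefficients, and that the finite-generation caveat you flag for \cref{independent} is one the paper's own proof passes over silently.
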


\begin{proof}
The proof is by induction on hierarchy length $n$. The well-known fact that free groups have cohomological dimension at most one (over any ring), combined with \cref{L2 Betti of Fn} forms the base case. Now suppose that the result is true for all groups in $\mathcal{G}_{n-1}$. 

Suppose that $G \isom A*_{C}$ where $A\in \mathcal{G}_{n-1}$ and where $C$ is $L^2$-indepen\-dent in $A$. As $H^i(A, \D_{\Q A}) = 0$ and $H^i(C, \D_{\Q A}) = 0$ for all $i> 1$, extension of scalars gives us $H^i(A, \D_{\Q G}) = 0 = H^i(C, \D_{\Q G})$, and from Bieri's long exact sequence for HNN-extensions \cite{Bieri1975} we see that $b_i^{(2)}(G) = 0$ for all $i>2$. Now \cref{independent} forces $b^{(2)}_2(G) = 0$ also. As $G$ is infinite, we have $b_0^{(2)}(G) = 0$ by \cite[Theorem 1.35]{luck_02}. If $R$ is any commutative ring, we have $\cd_{R}(C)\leqslant 1$ and $\cd_{R}(A)\leqslant 2$ by induction. Now Bieri's long exact sequence yields that $\cd_{R}(G)\leqslant 2$.

Using \cref{independent_2} and Swan's long exact sequence for amalgamated free products \cite{Swan1969}, we may similarly establish the same fact when $G\isom A*_CB$ with $A, B\in \mathcal{G}_{n-1}$ and where $C$ is $L^2$-independent in $A$. 
\end{proof}

Using \cref{ascending_example}, we obtain the following corollary.

\begin{corollary}
\label{ascending_corollary}
If $G$ is a finitely generated ascending HNN-extension of a free group, then $b^{(2)}_i(G) = 0$ for all $i\neq 1$ and $\cd_R(G)\leqslant 2$ for any commutative ring $R$.
\end{corollary}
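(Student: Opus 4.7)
The plan is a direct two-step deduction from results already established in this section. First, I would invoke \cref{ascending_example} to exhibit $G$ as a group admitting an $L^2$-independent hierarchy of length one; second, I would feed this into \cref{hierarchy} to extract the vanishing of $L^2$-Betti numbers and the bound on cohomological dimension.

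For the first step, the Feighn--Handel presentation $G \isom \langle A, B, t \mid t^{-1}at = \psi(a),\ a\in A\rangle$ exhibits $G$ as an HNN-extension of the free group $\langle A, B\rangle$ with associated subgroup $\langle A\rangle$. Since $\langle A\rangle$ is a free factor of $\langle A, B\rangle$, it is $L^2$-independent therein by \cref{independent_factor}. Moreover, $G$ satisfies the strong Atiyah conjecture by Linnell's theorem, so $G$ lies in the class $\mathcal{G}_1$.

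For the second step, I need $G$ to be non-trivial before applying \cref{hierarchy}; this is immediate, since sending $t \mapsto 1$ and collapsing the free factor produces a surjection onto $\Z$. \cref{hierarchy} then delivers both conclusions of the corollary simultaneously: $b^{(2)}_i(G) = 0$ for all $i \neq 1$, and $\cd_R(G) \leqslant 2$ for any commutative ring $R$.

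There is no substantive obstacle in this proof: everything that could conceivably fail — the $L^2$-independence of $\langle A\rangle$ and the verification of the strong Atiyah conjecture — has already been handled in the preceding examples and citations. All of the heavy machinery (the Mayer--Vietoris computations over the Linnell skew-field, the twisted Laurent series argument of \cref{Laurent series}, and Linnell's theorem) has been packaged into \cref{hierarchy}, so the corollary follows in a single line once the hierarchy framework is in place.
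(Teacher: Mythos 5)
Your proposal is correct and coincides with the paper's own argument: the paper deduces \cref{ascending_corollary} precisely by combining \cref{ascending_example} (the Feighn--Handel presentation, $L^2$-independence of the free factor via \cref{independent_factor}, and the strong Atiyah conjecture via Linnell's theorem) with \cref{hierarchy}. No further comment is needed.
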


\section{Quasi-convex subgroups of hyperbolic groups}
\label{section_5}

Recall that a hyperbolic group is \emph{non-elementary} if it is not virtually cyclic. A subgroup $H\leqslant G$ is \emph{malnormal} if $H\cap H^g = 1$ for all $g\in G - H$. The aim of this section will be to prove the following theorem which is a generalisation of \cite[Theorems C \& D]{kapovich_99} by Kapovich.

\begin{mainthm}[\ref{conjugacy_separated_subgroup_intro}]
Let $n$ be a positive integer, let $G$ be a non-elementary torsion-free hyperbolic group and let $H_1, \dots, H_k\leqslant G$ be quasi-convex subgroups of infinite index. There exists a malnormal quasi-convex free subgroup $H\leqslant G$ of rank $n$ such that $H_i\cap H^g = 1$ for all $i$ and all $g\in G$.
\end{mainthm}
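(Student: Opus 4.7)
The plan is to prove the theorem by reducing it to the free group case, which the authors have already re-established earlier in this section. Concretely, I would first construct an intermediate malnormal quasi-convex free subgroup $F\leqslant G$ of rank $m\geqslant n$ satisfying $F\cap H_i^g=1$ for every $i$ and every $g\in G$, and then apply the free group version of the theorem inside $F$ to extract a malnormal quasi-convex (in $F$) free subgroup $H\leqslant F$ of rank exactly $n$. Transferring the conclusions upwards to $G$ is essentially formal: quasi-convexity of $H$ in $G$ follows from transitivity of quasi-convexity; malnormality in $G$ holds because any $g\in G$ with $H\cap H^g\neq 1$ would force $F\cap F^g\neq 1$, whence $g\in F$ by malnormality of $F$ in $G$, and then $g\in H$ by malnormality of $H$ in $F$; and conjugacy separation is inherited via $H\cap H_i^g\leqslant F\cap H_i^g=1$.

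The construction of such an $F$ is the heart of the argument and it is where the boundary dynamics of $G\curvearrowright\partial G$ enter. For each quasi-convex subgroup $H_i$ of infinite index, the limit set $\Lambda(H_i)\subsetneq\partial G$ is closed and proper. My plan is to choose loxodromic elements $g_1,\dots,g_m\in G$ whose attracting and repelling fixed points on $\partial G$ are pairwise disjoint and avoid the translates $g\Lambda(H_i)$ in a quantitative sense, exploiting density of loxodromic fixed points together with the existence of biinfinite axes in $G$ whose projections onto each coset $gH_i$ remain bounded. Replacing the $g_j$ by a common sufficiently high power, the usual ping-pong lemma produces a quasi-convex free subgroup $F=\langle g_1^N,\dots,g_m^N\rangle$, and a quantitative refinement of ping-pong (used by Gromov for freeness and by Arzhantseva and Kapovich for malnormality) also guarantees that $F$ is malnormal in $G$.

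The main obstacle is securing the conjugacy separation $F\cap H_i^g=1$ against the \emph{infinite} family of conjugates $\{H_i^g:g\in G\}$ simultaneously, since orbits $G\cdot\Lambda(H_i)$ can easily be dense in $\partial G$. The key observation that makes this tractable is the finite height property of quasi-convex subgroups in hyperbolic groups, due to Gitik--Mitra--Rips--Sageev: only finitely many combinatorial patterns of long overlap between loxodromic axes and cosets $gH_i$ can occur up to the action of $G$, and these patterns can be enumerated and then forbidden by taking the ping-pong powers $N$ large enough. Any non-trivial element of $F\cap H_i^g$ is loxodromic (torsion-freeness of $G$) and is forced to realise such a long overlap along its axis, so the enumeration-and-forbidding step rules it out. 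Once $F$ is built, the free group version reproved earlier in the section, applied inside $F$ with no additional forbidden subgroups, delivers the desired rank-$n$ subgroup $H$, concluding the proof.
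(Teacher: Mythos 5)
Your reduction step is essentially vacuous: an ``intermediate'' subgroup $F\leqslant G$ that is free, quasi-convex, malnormal in $G$, of rank $m\geqslant n$, and satisfies $F\cap H_i^g=1$ for all $i$ and all $g\in G$ is already the full statement of the theorem (with $m$ in place of $n$), so all of the content of your argument sits in the second part, the boundary-dynamics construction of $F$ -- and that is where the genuine gap lies. The step that needs proof is precisely the simultaneous, uniform control over the \emph{infinitely many} cosets $gH_i$ and over all potential malnormality violations, and the tools you invoke do not supply it. Density of loxodromic fixed points and avoidance of the limit sets $\Lambda(H_i)$ give no uniform bound on how long an axis can fellow-travel a translate $gH_i$, and the Gitik--Mitra--Rips--Sageev finite height theorem is not the right statement here: it bounds the number of conjugates of $H_i$ whose common intersection is infinite, and says nothing about ``finitely many combinatorial patterns of long overlap between loxodromic axes and cosets'' that could be ``enumerated and forbidden'' by raising the ping-pong power $N$. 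What your argument actually needs is a quantitative statement that the chosen axes have uniformly bounded coarse intersection with \emph{every} coset $gH_i$ (a strengthening of Minasyan's result, which is what the paper uses as \cref{cyclic_conjugates}), together with a genuine quantitative ping-pong/local-to-global argument producing malnormality in $G$ -- each of these is a substantial piece of work, comparable to Kapovich's original proof, and none of it is carried out or correctly sourced in your sketch.

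The paper's proof deliberately sidesteps exactly these difficulties, and it is worth seeing how. Its auxiliary free subgroup $F$ is \emph{not} required to be malnormal nor to avoid the conjugates of the $H_i$: one takes Minasyan's element $x$ with $\langle x\rangle\cap H_i^g=1$, a ping-pong partner $y$ from \cref{qc_powers}, and sets $F=\comm_G(\langle x,y\rangle)$, which by \cref{qc_commensurator} is free, quasi-convex, of infinite index and its own commensurator. Then \cref{quasi-convex_intersection} (Kharlampovich--Miasnikov--Weil) converts the infinite families $\{F\cap H_i^g\}$ and $\{F\cap F^g:g\notin F\}$ into finitely many conjugacy classes of finitely generated, infinite-index subgroups $F_1,\dots,F_m$ of the free group $F$, and the combinatorial free-group statement \cref{malnormal_free} -- applied \emph{with} this finite forbidden list, the very feature your plan discards by using it ``with no additional forbidden subgroups'' -- produces $H$ at once malnormal in $F$ and disjoint from all $F$-conjugates of the $F_j$, from which malnormality in $G$ and $H\cap H_i^g=1$ follow formally. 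In short, the paper replaces the quantitative geometric control your sketch assumes by an algebraic finiteness theorem plus a Stallings-graph construction; your route is in the spirit of Kapovich's original argument, to which the paper is explicitly offering an alternative, but as written it leaves the hardest step unproved.
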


In order to prove \cref{conjugacy_separated_subgroup_intro}, we first require the corresponding fact for subgroups of free groups. The following appears as \cite[Theorem D]{kapovich_99}. We provide an alternative, much shorter proof below.

\begin{proposition}
\label{malnormal_free}
Let $n$ be a positive integer, let $F$ be a finitely generated non-cyclic free group and let $F_1, \dots , F_k\leqslant F$ be a finite collection of finitely generated infinite-index subgroups. There exists a malnormal subgroup $H\leqslant F$ of rank $n$ such that ${F_i} \cap H^g = 1$ for all $i$ and all $g\in F$.
\end{proposition}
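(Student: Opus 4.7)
The approach I would take is to construct $H$ as the subgroup generated by $n$ suitably generic long words $w_{1},\dots ,w_{n}\in F$, verifying all three required properties simultaneously via Stallings-graph combinatorics. Let $\Gamma$ be the rose of rank $\rk(F)$, let $\Delta_{i}\immerses\Gamma$ be the finite Stallings immersion of $F_{i}$, and let $\Gamma_{i}:=F_{i}\backslash F$ be the full (infinite) Schreier coset graph; the infinite-index hypothesis on $F_{i}$ is precisely the statement that $\Delta_{i}$ is a proper subgraph of $\Gamma_{i}$, so some vertex of $\Delta_{i}$ admits an outgoing edge in $\Gamma_{i}$ with label not in $\Delta_{i}$.

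The first step is to record the Stallings-graph criteria for the three properties of $H=\langle w_{1},\dots ,w_{n}\rangle$ with folded graph $\Lambda\immerses\Gamma$: (i) $H$ is freely generated of rank $n$ iff $\Lambda$ is the $n$-petalled rose; (ii) $H$ is malnormal iff the fibre product $\Lambda\times_{\Gamma}\Lambda$ has no connected component off the diagonal with non-trivial $\pi_{1}$; and (iii) $F_{i}\cap H^{g}=1$ for all $g\in F$ iff the fibre product $\Lambda\times_{\Gamma}\Delta_{i}$ has no non-trivial cycles, equivalently iff no reduced word in the $w_{j}^{\pm 1}$'s reads a loop in $\Gamma_{i}$ from any basepoint. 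These three criteria are all local-to-finite properties of Stallings graphs.

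The second step is to choose the $w_{j}$ to realise all three criteria at once. I would take them to be long cyclically reduced words, not proper powers, with pairwise cyclic overlaps bounded by a small-cancellation constant $\lambda$, and with the additional property that their labels, read in each $\Gamma_{i}$ from any basepoint, escape every bounded region of $\Gamma_{i}$ and do not return. Standard small-cancellation arguments in free groups then give (i) and malnormality (ii), while the escape condition together with small cancellation gives (iii): any non-trivial element of $H$ is (up to conjugation) a reduced product of the $w_{j}^{\pm 1}$'s in which cancellation is controlled, so its lift to $\Gamma_{i}$ contains long untangled $w_{j}$-subpaths and cannot close up to the basepoint. The existence of such $w_{j}$'s is a counting argument: for each fixed length $L$, the sets of tuples failing (i), failing (ii), or failing (iii) for some $i$ each have density tending to $0$ as $L\to\infty$ (using crucially the infiniteness of each $\Gamma_{i}$ for the last), so their union has density strictly less than $1$ for $L$ large.

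The main obstacle is the simultaneous orchestration: (i) and (ii) are classical small-cancellation genericity statements in a free group, but (iii) mixes the free-group Stallings graph with the potentially wild combinatorics of the Schreier graphs $\Gamma_{i}$, and I expect the bulk of the work to be in proving a uniform "escape lemma" for generic long words in each $\Gamma_{i}$ and then checking that the bounded cancellation afforded by small overlaps in the $w_{j}$'s preserves escape under arbitrary reduced products. Once this lemma is in place, the proof is essentially a verification that the three density estimates combine.
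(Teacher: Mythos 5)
Your route is viable but genuinely different from the paper's. The paper argues deterministically rather than generically: it first reduces to $n=2$ (any malnormal rank-$n$ subgroup of a malnormal rank-$2$ subgroup that avoids all conjugates of the $F_i$ still works), then constructs by hand, via an induction over the vertices of $\core(\bigsqcup_i\Gamma_i)$, a single word $f$ whose lift starting at every core vertex exits the core and hence cannot be extended to an immersed loop in any $\Gamma_i$; finally it takes the two explicit generators $g_2g_1g_2^{q}f^2g_1g_2$ and $g_1g_2f^2g_1^{p}g_2g_1$, whose buffer letters preclude any cancellation, so that every cyclically reduced word in $H$ visibly contains $f^{\pm1}$ (killing intersections with conjugates of the $F_i$) and contains $g_1^{\pm p}$ or $g_2^{\pm q}$, whose lifts to the core of the cover of $H$ are unique (giving malnormality). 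Genericity (Jitsukawa) is invoked only for the $k=0$ case. Your approach treats all ranks at once and packages everything as a density statement, at the price of genuine small-cancellation bookkeeping; the paper's approach avoids both the probabilistic estimates and any cancellation analysis, at the price of an explicit inductive construction.

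Two caveats you should address. First, your ``escape lemma'' is exactly the load-bearing step, and in your sketch it is asserted rather than proved; as stated it does not quite parse (a finite word cannot ``escape every bounded region'' of the infinite graph $\Gamma_i$ from an arbitrary basepoint). The usable formulation is that the middle segments of the $w_j$ that survive the bounded cancellation cannot be read as immersed paths inside the finite cores $\core(\Gamma_i)$; the counting then rests on the fact that the number of immersed paths of length $L$ inside a fixed finite core that is not a covering of the rose grows at rate strictly smaller than $2\rk(F)-1$, which is precisely where infinite index enters. This is provable, and it is the probabilistic counterpart of the paper's construction of $f$, but it needs to be written out. Second, your criterion (i) is stated as an equivalence but is only a sufficient condition: $w_1,\dots,w_n$ freely generate a subgroup of rank $n$ if and only if the folded core has first Betti number $n$, not only when it is the $n$-petalled rose (e.g.\ generators sharing a first letter can still be a free basis). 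This slip is harmless for your argument, but the statement should be corrected.
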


{Before embarking on the proof, we shall provide a brief idea of what we will do. We shall identify $F$ with $\pi_1(\Delta)$ for some finite graph $\Delta$ and each subgroup $F_i$ with immersions $\Gamma_i\immerses\Delta$ of compact core graphs via the $\pi_1$-functor (see work of Stallings \cite{stallings_83}). The main technical part of the proof will then involve proving the existence of an immersed path $I\immerses\Delta$ which does not lift to any $\Gamma_i\immerses\Delta$. Using this path we will then explicitly provide generators of a subgroup $H\leqslant F$ such that any loop in $\Delta$ representing an element of $H$ will contain this path as a subpath. Finally, using this fact we will verify that $H$ satisfies the claimed properties.}

\begin{proof}
	We may assume that the subgroups $F_i$ are non-trivial.
	
We first prove the result for $k=0$. Jitsukawa proved that generic tuples of $n$ elements in a non-cyclic free group generate malnormal subgroups of rank $n$ \cite[Lemmata 4 and 6]{jitsukawa_02}. Therefore, there certainly exists a malnormal subgroup of $F$ of rank $n$. One can also deduce this more explicitly using the small-cancellation conditions for graph maps studied by Wise \cite[Theorem 2.14]{wise_01}.

Now suppose that $k>0$. By the above it suffices to prove the result for $n=2$ for the following reason: if there exists a subgroup $H\leqslant F$ of rank two that is malnormal and intersects all conjugates of each $F_i$ trivially, then any malnormal subgroup of $H$ will also be malnormal in $F$ and will intersect all conjugates of each $F_i$ trivially.

Let $r\geqslant 2$ be the rank of $F$ and let $\Delta$ denote the $r$-rose, that is the graph with a single vertex and an edge for each free generator $g_i$ of $F$. In this way, combinatorial paths $I\to \Delta$, with $I$ denoting the unit interval, correspond to words over the generating set for $F$ and so we make the identification $\pi_1(\Delta) = F$. Note that all of our paths will be combinatorial, in the sense that they will send the endpoints of $I$ to vertices; they will also be non-trivial. A map of graphs is an \emph{immersion} if it is locally injective. We use the symbol $\immerses$ to denote an immersion. An immersed path $I\immerses \Delta$ corresponds to a freely reduced word over the generating set. An immersed (pointed) cycle $S^1\immerses \Delta$ corresponds to a cyclically reduced word over the generating set.

Let $\Gamma_i\immerses\Delta$ denote the connected covering space such that $\pi_1(\Gamma_i) = F_i$. As $F_i$ has infinite index, $\Gamma_i$ has infinitely many vertices and edges. Let $\Gamma = \bigsqcup \Gamma_i$, which is also a covering space. As each $F_i$ was assumed to be finitely generated (and non-trivial), there is a unique compact subgraph $\core(\Gamma)\subset \Gamma$, minimal under inclusion, which is a deformation retract of $\Gamma$, see \cite{stallings_83}. The graph $\overline{\Gamma - \core(\Gamma)}$ is a forest, and each of its connected components is a tree intersecting $\core(\Gamma)$ in a single point. Hence, an immersed path in $\Gamma$ ending outside of $\core(\Gamma)$ {cannot be extended to an immersed path ending inside of $\core(\Gamma)$}. In particular, this implies that every immersed cycle $S^1\immerses\Gamma$ must factor through $\core(\Gamma)$.

Now let $I\immerses \Delta$ be an immersed path. We say that $I\immerses\Delta$ can be \emph{completed} in $\Gamma$ if it admits a lift and an extension to an immersion $S^1\immerses\Gamma$. By the above observations, an immersed path $I\immerses \Delta$ can be completed in $\Gamma$ only if it admits a lift to $\core(\Gamma)$. Let $v_1, \dots, v_m$ be some enumeration of the vertices of $\core(\Gamma)$. We will now inductively construct immersed paths $I_i\immerses\Delta$ such that the lifts of $I_i$ starting at $v_1, \dots, v_i$ end outside of $\core(\Gamma)$. Then $I_m\immerses\Delta$ will be a path that cannot be completed. Since each component of $\Gamma$ contains a vertex not in $\core(\Gamma)$, there is some path $I_1\immerses\Delta$, such that its unique lift to $\Gamma$ starting at $v_1$ ends at a vertex not in $\core(\Gamma)$. Now suppose we have already constructed $I_i\immerses \Delta$. If the lift of $I_i$ starting at $v_{i+1}$ is not contained in $\core(\Gamma)$, then we let $I_{i+1} = I_i$. Otherwise, let $\Gamma_j$ be the component containing $v_{i+1}$ and let $e$ be the last oriented edge that the lift of $I_i$ starting at $v_{i+1}$ traverses. Let $v\in \Gamma_j$ be a vertex not in $\core(\Gamma)$. If the endpoint of $e$ and $v$ lie in the same connected component of $\Gamma_j - e$, then there is an immersed path $I'\immerses \Gamma_j - e$ connecting the endpoint of $e$ with $v$. Then concatenating $I_i$ with the projection of $I'$ to $\Delta$ gives us the required path. Now suppose that $\Gamma_j - e = \Gamma_j'\sqcup\Gamma_j''$ consists of two connected components with $v\in \Gamma_j'$ and the endpoint of $e$ in $\Gamma_j''$. Since $e$ has its endpoint in $\core(\Gamma_j)$, we have that $\Gamma_j''\cap \core(\Gamma_j)$ is not contractible. Hence, there is an immersed loop $I''\immerses\Gamma_j''$ based at the endpoint of $e$. As $\overline{\Gamma_j'\cup e}$ is connected, there is an immersed path $I'\immerses \overline{\Gamma_j'\cup e}$ connecting the endpoint of $e$ with $v$. Now concatenating $I_i$ with the projection of the concatenation $I''*I'$ to $\Delta$ gives us the required path.

Now let $f\in F = \pi_1(\Delta)$ be the element corresponding to $I_m$. By extending $I_m$ if necessary, we may assume that $f$ begins with $g_1$ and ends with $g_2$. Therefore, the path associated with $f^i$ is also immersed for all $i\neq 0$ and also cannot be completed in $\Gamma$. Let $p\geqslant 3$ be greater than the maximal positive integer $i$ such that $g_1^{\pm i}$ appears as a subword of $f$. Let $q\geqslant 3$ be greater than the maximal positive integer $i$ such that $g_2^{\pm i}$ appears as a subword of $f$. Put 
\[
H = \langle g_2g_1g_2^qf^2g_1g_2, \, g_1g_2f^2g_1^pg_2g_1\rangle.
\]
We claim that $H$ is the required subgroup. Note that any cyclically reduced word over the chosen generators for $H$ is also a cyclically reduced word over the generators for $F$. Denote by $\Lambda\immerses \Delta$ the connected  cover with $\pi_1(\Lambda) = H$. Then by construction, if $S^1\immerses\Lambda$ is any immersed cycle, there must be an immersed path with label $f^2$ factoring through it. Hence, if $I\immerses \core(\Lambda)$ is any immersed loop, there must be an immersed path with label $f$ or $f^{-1}$ factoring through it. Thus, the projection of any immersed loop $I\immerses\Lambda$ to $\Delta$ cannot lift to a loop in $\Gamma$. But this precisely means that $H\cap {F_i}^g = 1$ for all $g\in F$ and all $i$. 

It remains to be shown that $H$ is malnormal. By definition of $H$, there is only one lift of the path labeled $g_1^{\pm p}$ to $\core(\Lambda)$. Equivalently, there is only one lift of the path labeled $g_1^{\pm p}$ to $\Lambda$ that can be extended to an immersed cycle. Similarly for $g_2^{\pm q}$. Moreover, if $I\immerses\Lambda$ is any immersed loop starting at the basepoint, there must be an immersed path in $\core(\Lambda)$ with label $g_1^{\pm p}$ or $g_2^{\pm q}$ factoring through it. Hence, any path $I\immerses \Delta$ that lifts to a loop in $\Lambda$, starting at the basepoint, cannot lift to any other loop in $\Lambda$. This implies that $H$ is malnormal and so we are done.
\end{proof}

The following proposition is a direct consequence of \cite[Proposition 6.7]{kharlampovich_17} by Kharlampovich--Miasnikov--Weil. A slight modification of the proof of \cite[Lemma 1.2]{gitik_98} by Gitik--Mitra--Rips--Sageev also yields the same result.

\begin{proposition}
\label{quasi-convex_intersection}
Let $G$ be a torsion-free hyperbolic group and let $K, H\leqslant G$ be two quasi-convex subgroups. There is a finite family of quasi-convex subgroups $K_1, \dots, K_m\leqslant K$, each equal to $K\cap H^g$ for some $g\in G$, such that for all $h\in G$ the group $K\cap H^h$ is conjugate within $K$ to some $K_i$.
\end{proposition}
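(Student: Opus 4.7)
The plan is to reduce the proposition to showing that only finitely many $H$-$K$-double cosets $HgK$ give rise to non-trivial intersections $K \cap H^g$. This reduction rests on two observations: first, each $K \cap H^g$ is a finite intersection of quasi-convex subgroups of $G$, and is therefore quasi-convex by Short's theorem on intersections of quasi-convex subgroups in hyperbolic groups; second, if $g' = hgk$ with $h \in H$ and $k \in K$, then a direct computation using $h^{-1}Hh = H$ yields $H^{g'} = k^{-1}H^gk$, and hence $K \cap H^{g'} = k^{-1}(K \cap H^g)k$ is $K$-conjugate to $K \cap H^g$. So the $K$-conjugacy class of $K \cap H^g$ depends only on the double coset $HgK$, and it suffices to show finitely many such double cosets have non-trivial intersection.

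The key geometric step would be to produce a short double coset representative: I would show that there exists a constant $M = M(\delta, \epsilon_K, \epsilon_H)$ such that whenever $K \cap H^g$ is non-trivial, the double coset $HgK$ contains an element of word length at most $M$. Pick any non-trivial $k \in K \cap H^g$; by torsion-freeness $k$ is a hyperbolic isometry with bi-infinite axis $\mathrm{Ax}(k)$, whose endpoints $k^{\pm} \in \partial G$ lie in the limit set $\Lambda(K)$ since $\{k^n\}_n \subset K$. As $K$ is $\epsilon_K$-quasi-convex, the geodesic $\mathrm{Ax}(k)$ joining two points of $\Lambda(K)$ lies in a neighborhood $N_\lambda(K)$ with $\lambda = \lambda(\epsilon_K, \delta)$. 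Symmetrically, $g \cdot \mathrm{Ax}(k) = \mathrm{Ax}(gkg^{-1})$ lies in $N_\mu(H)$, so $\mathrm{Ax}(k) \subseteq N_\mu(g^{-1}H)$. Picking any point $p \in \mathrm{Ax}(k)$ yields $k' \in K$ and $h' \in H$ with $d(p, k') \leqslant \lambda$ and $d(p, g^{-1}h') \leqslant \mu$; then $d(k', g^{-1}h') \leqslant \lambda + \mu$, which translates into $|h'^{-1}gk'| \leqslant \lambda + \mu$ in the word metric, and $h'^{-1}gk' \in HgK$.

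Since $G$ has only finitely many elements of length at most $\lambda + \mu$, only finitely many double cosets $HgK$ can admit a non-trivial intersection $K \cap H^g$; picking representatives $g_1, \dots, g_{m-1}$ and adjoining the trivial subgroup yields the desired finite family $K_1, \dots, K_m$. The principal technical obstacle is ensuring the bounds $\lambda$ and $\mu$ are truly uniform in the choice of $k$: a naive approach would bound the axis-to-subgroup distance in terms of $d(1,\mathrm{Ax}(k))$, which depends on $k$. The uniformity is recovered by the limit-set perspective above, realising $\mathrm{Ax}(k)$ as the limit of geodesics $[k^{-n}, k^n] \subset N_{\epsilon_K}(K)$; this is exactly the analytic content of the cited works of Kharlampovich--Miasnikov--Weil and Gitik--Mitra--Rips--Sageev.
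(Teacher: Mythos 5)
Your argument is correct: the double-coset reduction ($K\cap H^{hgk}=k^{-1}(K\cap H^g)k$ for $h\in H$, $k\in K$), Short's theorem for quasi-convexity of the intersections, and the uniform bound $\lambda+\mu$ on a shortest representative of $HgK$ obtained from an axis of a non-trivial $k\in K\cap H^g$ lying in uniform neighbourhoods of $K$ and of $g^{-1}H$ together yield the statement (with the minor remark that the trivial subgroup should only be adjoined when it actually occurs as some $K\cap H^g$, which is precisely when it is needed). The paper gives no proof of its own, deferring to Kharlampovich--Miasnikov--Weil and to a ``slight modification'' of Gitik--Mitra--Rips--Sageev; your proof is essentially that modified Gitik--Mitra--Rips--Sageev argument (finiteness of double cosets with infinite intersection, plus the double-coset bookkeeping), so it takes the same route the paper alludes to.
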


If $G$ is a group and $H\leqslant G$ is a subgroup, then the \emph{commensurator} of $H$ in $G$ is defined to be the following subgroup:
\[
\comm_G(H) = \{g\in G \mid H\cap H^g \text{ has finite index in $H$ and $H^g$}\}. 
\]
We shall need the following result due to Arzhantseva \cite{arzhantseva_01}*{Theorem 2}.

\begin{theorem}
\label{qc_commensurator}
Let $G$ be a hyperbolic group and $H\leqslant G$ an infinite quasi-convex subgroup. Then $[\comm_G(H):H]<\infty$. In particular, $\comm_G(H)$ is quasi-convex.
\end{theorem}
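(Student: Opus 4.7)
The plan is to prove $\comm_G(H) \leqslant \operatorname{Stab}_G(\Lambda(H))$, where $\Lambda(H)\subseteq\partial G$ is the limit set of $H$ in the Gromov boundary, and then to bound the index $[\operatorname{Stab}_G(\Lambda(H)):H]$. Two standard properties of limit sets of infinite quasi-convex subgroups drive the first step: equivariance $\Lambda(gHg^{-1})=g\cdot\Lambda(H)$, and invariance under passage to finite-index subgroups, $\Lambda(K)=\Lambda(H)$ whenever $[H:K]<\infty$ (since in that case $K$ lies at bounded Hausdorff distance from $H$ in any Cayley graph of $G$). For $g\in\comm_G(H)$, the subgroup $H\cap H^g$ has finite index in both $H$ and $H^g$, so applying these two properties to it yields $\Lambda(H)=\Lambda(H\cap H^g)=\Lambda(H^g)=g\cdot\Lambda(H)$, placing $g$ in the set-wise stabiliser of $\Lambda(H)$.

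Next I would bound $[\operatorname{Stab}_G(\Lambda(H)):H]$ by cases on $|\Lambda(H)|$. If $H$ is virtually cyclic, then $\Lambda(H)=\{p,q\}$ consists of the two fixed points of some loxodromic element $h\in H$, and the set-wise stabiliser of such a pair in a hyperbolic group coincides with the elementary closure $E_G(h)$, a maximal virtually cyclic subgroup of $G$ containing $H$ with finite index. If $H$ is non-elementary, then $\Lambda(H)$ is uncountable, and one considers the quasi-convex hull $Y\subseteq G$ of $\Lambda(H)$, that is, the union of geodesics in the Cayley graph joining pairs of points of $\Lambda(H)$; by quasi-convexity of $H$ and the definition of $\Lambda(H)$, the set $Y$ lies at finite Hausdorff distance from an orbit of $H$, and $H$ acts properly and cocompactly on $Y$. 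Any element of $\operatorname{Stab}_G(\Lambda(H))$ preserves $Y$ up to bounded Hausdorff distance, so the larger group also acts properly and cocompactly on a uniform neighbourhood of $Y$; an orbit-counting argument then forces the index over $H$ to be finite.

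The second assertion, that $\comm_G(H)$ is quasi-convex, is then immediate from the elementary fact that a finite-index overgroup of a quasi-convex subgroup of a hyperbolic group is quasi-convex, since it lies in a bounded neighbourhood of the original subgroup in the Cayley graph. The main technical hurdle is the cocompactness argument in the non-elementary case: one must verify that the coarse notion of an element $g\in G$ fixing $\Lambda(H)$ set-wise translates into genuine invariance of some bounded neighbourhood of $Y$. This hinges on thin-triangle estimates and the stability of quasi-geodesics in $\delta$-hyperbolic spaces, which ensure that two bi-infinite quasi-geodesics with the same endpoints in $\partial G$ fellow-travel at bounded distance, so that all translates of $Y$ by $\operatorname{Stab}_G(\Lambda(H))$ accumulate within a uniform neighbourhood of $Y$. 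A combinatorial alternative worth bearing in mind, which avoids the boundary entirely, is to apply the finite-width theorem of Gitik--Mitra--Rips--Sageev recorded in \cref{quasi-convex_intersection}: the conjugates $\{gHg^{-1}:g\in\comm_G(H)\}$ pairwise intersect infinitely and hence form a finite set, reducing the problem to showing $[N_G(H):H]<\infty$.
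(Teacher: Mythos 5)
Your argument is correct, but it is worth noting that the paper does not prove this statement at all: it is quoted directly from Arzhantseva \cite{arzhantseva_01}*{Theorem 2}, so there is no internal proof to compare against. What you have written is essentially the classical limit-set argument (in the spirit of Kapovich--Short's ``Greenberg theorem'' for quasi-convex subgroups, which is also how Arzhantseva's result is usually understood): $\comm_G(H)\leqslant \operatorname{Stab}_G(\Lambda(H))$ via equivariance and finite-index invariance of limit sets, then finiteness of $[\operatorname{Stab}_G(\Lambda(H)):H]$ by splitting into the virtually cyclic case (elementary closure of a loxodromic) and the non-elementary case (proper cocompact action on the weak convex hull of $\Lambda(H)$, which lies at bounded Hausdorff distance from an $H$-orbit by quasi-convexity, followed by orbit counting). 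All the ingredients you invoke are standard and the orbit-counting step does give finite index; also note that in the non-elementary case the stabiliser of $\Lambda(H)$ preserves the hull $Y$ \emph{exactly}, since $Y$ is canonically determined by $\Lambda(H)$, so the ``coarse invariance'' issue you flag as the main technical hurdle is not really present. The final deduction that $\comm_G(H)$ is quasi-convex, being at bounded Hausdorff distance from $H$, matches the ``in particular'' clause of the statement.

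One small caution about your proposed combinatorial alternative: \cref{quasi-convex_intersection} only says that the intersections $K\cap H^{g}$ fall into finitely many conjugacy classes within $K$; it does not immediately give that the commensurating conjugates $\{H^{g}: g\in\comm_G(H)\}$ form a finite set, nor does finiteness of that set by itself reduce the problem to $[N_G(H):H]<\infty$ without an extra argument (one would still need to control the cosets of the setwise stabiliser of the finite family). Since this is offered only as an aside, it does not affect the correctness of your main proof, but as written that shortcut has a gap.
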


We require an additional fact about commensurators of quasi-convex subgroups of hyperbolic groups.

\begin{lemma}
\label{commensurator}
Let $G$ be a hyperbolic group and $H\leqslant G$ an infinite quasi-convex subgroup. If $H\cap H^g$ has finite index in $H$, then it has finite index in $H^g$ also. In particular, we have
\[
\comm_G(H) = \{g\in G \mid H\cap H^g \text{ has finite index in $H$ or $H^g$}\}.
\]
\end{lemma}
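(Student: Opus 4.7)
The second assertion (the equivalent description of $\comm_G(H)$) follows from the first applied to both $g$ and $g^{-1}$: note that conjugating $H\cap H^g$ by $g^{-1}$ yields $H\cap H^{g^{-1}}$, so $[H^{g^{-1}} : H\cap H^{g^{-1}}] = [H : H\cap H^g]$ and $[H : H\cap H^{g^{-1}}] = [H^g : H\cap H^g]$. I therefore focus on the first assertion: if $K := H \cap H^g$ has finite index in $H$, then $[H^g : K]$ is also finite. First observe that $K$ is quasi-convex in $G$ by \cref{quasi-convex_intersection} (applied to $H$ and $H^g$), and hence also quasi-convex in $H^g$.

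The plan is to compare exponential growth rates in the word metric of $G$. Define $e(L) := \limsup_n \frac{1}{n} \log |L \cap B_n|$ for subsets $L \subseteq G$, where $B_n$ is the ball of radius $n$ in a fixed Cayley graph of $G$. Two elementary observations suffice. First, the conjugation bijection $H \to H^g$, $h \mapsto g h g^{-1}$, distorts word length by at most $2|g|$, so $e(H) = e(H^g)$. Second, writing $H = \bigsqcup_{i=1}^{[H:K]} h_i K$ and setting $C := \max_i |h_i|$, one has $|H \cap B_n| \leqslant [H:K] \cdot |K \cap B_{n+C}|$, which together with the trivial reverse bound gives $e(K) = e(H)$. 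Combining these, $e(K) = e(H^g)$.

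If $H$ is virtually cyclic, then so is $H^g$; since $K$ has finite index in infinite $H$, it is infinite, and any infinite subgroup of a virtually cyclic group has finite index. Otherwise $H^g$ is non-elementary. I then invoke growth-tightness: a quasi-convex subgroup of infinite index in a non-elementary hyperbolic group has strictly smaller exponential growth rate than the ambient group. Applied to $K \leqslant H^g$, this would contradict $e(K) = e(H^g)$ unless $[H^g : K]$ is finite, completing the argument.

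The main obstacle is the invocation of growth-tightness for quasi-convex subgroups, a classical fact rooted in Patterson--Sullivan theory on Gromov-hyperbolic groups (Coornaert) but non-trivial to prove from scratch; keeping the section self-contained may require an explicit reference or a direct combinatorial argument. An alternative approach via limit sets---namely, upgrading the inclusion $\Lambda(H) = \Lambda(K) \subseteq \Lambda(H^g) = g \Lambda(H)$ (which follows from $\Lambda(K) = \Lambda(H)$ by finite index, plus $K \leqslant H^g$) to the equality $\Lambda(H) = g\Lambda(H)$, whence $K$ and $H^g$ share a limit set and are thus commensurable---runs into an equivalent difficulty, since the upgrade to equality is essentially the growth-tightness input in disguise.
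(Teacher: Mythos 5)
Your reduction of the second assertion to the first is fine, as are the two elementary growth computations ($e(K)=e(H)$ from finite index, $e(H)=e(H^g)$ from conjugation) and the virtually cyclic case. The gap is in the final step, where you invoke growth-tightness for the pair $K\leqslant H^g$. The theorem you appeal to (the strict growth gap for infinite-index quasi-convex subgroups, due to Dahmani--Futer--Wise rather than being classical) compares the relative growth of a quasi-convex subgroup of a non-elementary hyperbolic group $\Gamma$ with the growth of $\Gamma$, \emph{both computed in a word metric of $\Gamma$ itself}. Applied to $K\leqslant H^g$ it gives a strict gap with respect to a word metric of $H^g$, whereas the equality you derived, $e(K)=e(H^g)$, is with respect to the metric of $G$. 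The inclusion $H^g\hookrightarrow G$ is only a quasi-isometric embedding, and exponential growth rates are not quasi-isometry invariants: a $(\lambda,c)$-quasi-isometric embedding only controls the exponents up to a multiplicative factor $\lambda$, so the strict inequality in the intrinsic metric of $H^g$ does not transfer to the metric induced from $G$, and no contradiction results. To make your scheme work you would need a growth-gap theorem for the (proper but non-cocompact) quasi-convex-cocompact action of $H^g$ on the Cayley graph of $G$, i.e.\ for nested quasi-convex subgroups measured in the ambient metric; such statements exist in the Patterson--Sullivan literature but are substantially heavier than a citation, so this is a genuine missing step rather than the matter of self-containedness your closing remark suggests. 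The same metric mismatch is what blocks the limit-set variant you sketch.

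For comparison, the paper's proof is short and elementary modulo the finite-width theorem of Gitik--Mitra--Rips--Sageev \cite{gitik_98}, which is already cited in this section: since $H\cap H^g$ has finite index in $H$, induction shows that $H\cap H^g\cap\dots\cap H^{g^n}=H\cap\bigl(H\cap H^g\cap\dots\cap H^{g^{n-1}}\bigr)^g$ has finite index in $H$ for every $n$, hence all these intersections are infinite; finite width then forces $g^n\in H$ for some $n\geqslant 1$, so $H\cap H^{g^{-1}}=H\cap H^{g^{n-1}}$ contains a finite-index subgroup of $H$, and conjugating by $g$ gives that $H\cap H^g$ has finite index in $H^g$. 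If you want to keep a growth-theoretic argument you must first establish the relative growth gap in the metric of $G$ for the pair $K\leqslant H^g$; otherwise the width argument is the efficient route.
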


\begin{proof}
Suppose that there is some $g\in G$ such that $H\cap H^g$ has finite index in $H$. By induction, one shows that 
\[
H\cap H^g\cap \dots \cap H^{g^n} = H \cap (H\cap H^g\cap \dots \cap H^{g^{n-1}})^g
\]
 has finite index in $H$ for all $n\geqslant 1$. By the main theorem in \cite{gitik_98}, this implies that $g^n\in H$ for some $n\geqslant 1$. But this then implies that $H\cap H^{g^{-1}} = H \cap H^{g^{n-1}}$ has finite index in $H$ and so $H\cap H^g$ has finite index in $H^g$.
\end{proof}

A rank-one version of \cref{conjugacy_separated_subgroup_intro}, stated below, is a direct consequence of \cite[Theorem 1]{minasyan_05} of Minasyan.

\begin{lemma}
\label{cyclic_conjugates}
Let $G$ be a hyperbolic group and let $H_1, \dots, H_k\leqslant G$ be quasi-convex subgroups of infinite index. There exists an element $x\in G$ of infinite order such that $\langle x\rangle \cap {H_i}^g = 1$ for each $i$ and all $g\in G$.
\end{lemma}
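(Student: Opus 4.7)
The plan is to invoke Minasyan's Theorem 1 in \cite{minasyan_05}, which provides exactly the desired element in the non-elementary case, and to dispose of elementary hyperbolic groups by hand.

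Since each $H_i$ is of infinite index, the group $G$ must be infinite. The first step is therefore to reduce to the non-elementary case. If $G$ is two-ended, then every quasi-convex subgroup of $G$ is either finite or of finite index in $G$; as each $H_i$ has infinite index, each $H_i$ is finite. An element $x \in G$ of infinite order exists because $G$ is infinite, and $\langle x\rangle$ is a torsion-free infinite cyclic subgroup. Hence $\langle x\rangle \cap H_i^g$ is a torsion-free subgroup of a finite group and is trivial for every $g$ and $i$.

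In the remaining case, $G$ is non-elementary. I would then apply Minasyan's Theorem 1 directly: given the finite family $H_1, \dots, H_k$ of quasi-convex subgroups of infinite index, it furnishes an infinite-order element $x \in G$ such that $\langle x\rangle \cap H_i^g = 1$ for every $g \in G$ and every $i$, which is precisely the statement we need.

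No substantive difficulty is expected, since the lemma is essentially a restatement of Minasyan's theorem. The only things to verify are that the hypotheses of Minasyan's theorem match those of our lemma in the non-elementary case, which is immediate, and that the two elementary subcases (finite and two-ended) are genuinely trivial, which they are. The hard part of the argument therefore lies entirely in \cite{minasyan_05}, where Minasyan's proof proceeds by carefully producing loxodromic elements whose axes avoid the finitely many limit sets of the conjugates of the $H_i$ that can meet any given compact region of the Cayley graph.
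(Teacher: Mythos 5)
Your proposal is correct and matches the paper, which simply observes that the lemma is a direct consequence of \cite[Theorem 1]{minasyan_05}; your extra hand-check of the elementary (two-ended) case is harmless and only makes the reduction explicit.
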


Finally, the following fact will be useful. It is a consequence of \cite[Theorem 5]{minasyan_06} and can also be derived from \cite{arzhantseva_01}.

\begin{proposition}
\label{qc_powers}
Let $G$ be a torsion-free hyperbolic group and let $H\leqslant G$ be a quasi-convex subgroup. Let $g\in G$ be a non-trivial element such that $\langle g\rangle\cap H =1$. Then, for sufficiently large $m\geqslant 1$, we have that $\langle H, g^m\rangle\isom H*\langle g^m\rangle$ is quasi-convex.
\end{proposition}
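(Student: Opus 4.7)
The plan is to deduce the proposition from Minasyan's combination theorem for quasi-convex subgroups of hyperbolic groups, after replacing $g$ by a generator of its maximal elementary subgroup.

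Since $G$ is torsion-free hyperbolic and $g$ has infinite order, the maximal virtually cyclic subgroup $E(g)$ of $G$ containing $g$ is in fact infinite cyclic, so we may write $E(g) = \langle h \rangle$ with $g = h^k$ for some $k \geqslant 1$. Two standard facts about $\langle h \rangle$ will be crucial. First, $\langle h \rangle$ is quasi-convex, since cyclic subgroups generated by infinite-order elements of hyperbolic groups are always quasi-convex. Second, $\langle h \rangle$ is malnormal in $G$: if $x \in G$ conjugates a non-trivial power of $h$ back into $\langle h \rangle$, then $x^{-1} \langle h \rangle x$ is a maximal virtually cyclic subgroup containing that power, so by maximality $x$ normalises $\langle h \rangle$, and hence $x \in \langle h \rangle$ by maximality of $E(h)$ combined with torsion-freeness. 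Moreover, the hypothesis $\langle g \rangle \cap H = 1$ upgrades to $\langle h \rangle \cap H = 1$: if $h^j \in H$ for some $j \neq 0$, then $g^j = h^{jk} \in H$, a contradiction.

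We may now apply Minasyan's theorem~\cite[Theorem 5]{minasyan_06}, whose hypotheses---quasi-convexity of $H$ and of $\langle h \rangle$, malnormality of $\langle h \rangle$, and trivial intersection $\langle h \rangle \cap H = 1$---are now satisfied. This yields an integer $N_0 \geqslant 1$ such that for every $N \geqslant N_0$, the subgroup $\langle H, h^N \rangle$ of $G$ is isomorphic to the free product $H * \langle h^N \rangle$ and is quasi-convex. Choosing any $m \geqslant 1$ with $mk \geqslant N_0$ and setting $N = mk$, the identity $g^m = h^{mk}$ gives $\langle H, g^m \rangle \isom H * \langle g^m \rangle$, and this subgroup is quasi-convex, as required.

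The only real obstacle is correctly interpreting Minasyan's theorem so that both the free-product structure and the quasi-convexity fall out simultaneously for all sufficiently large exponents; the passage from $g$ to the canonical generator $h$ of $E(g)$ is precisely what makes the malnormality hypothesis available. An alternative route, avoiding Minasyan as a black box, would be to run a ping-pong argument directly on the Cayley graph of $G$ using the quasi-convexity of $H$ and $\langle h \rangle$ together with thin-triangle estimates, following Arzhantseva~\cite{arzhantseva_01}; this is more involved but instructive.
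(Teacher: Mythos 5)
Your proof is correct and follows essentially the same route as the paper, which gives no argument beyond noting that the proposition is a consequence of Minasyan's result \cite[Theorem 5]{minasyan_06} (or alternatively of \cite{arzhantseva_01}) --- exactly the theorem you invoke. The extra steps you supply (passing to the generator $h$ of the maximal cyclic subgroup $E(g)$, checking quasi-convexity and malnormality of $\langle h\rangle$, and upgrading $\langle g\rangle\cap H=1$ to $E(g)\cap H=1$ via torsion-freeness) are the routine verifications needed to apply that citation, and you carry them out correctly.
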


We are now ready to prove the main result of this section.

\begin{proof}[Proof of \cref{conjugacy_separated_subgroup_intro}]
If $G$ is free, then the result is \cref{malnormal_free}, so let us assume that $G$ is not free. As $G$ is torsion-free, it is also not virtually free by \cite[Theorem B]{Swan1969}. 

Let $x\in G$ be an element of infinite order such that $\langle x\rangle\cap {H_i}^g = 1$ for each $i$ and all $g\in G$, given by \cref{cyclic_conjugates}. Note that $\langle x\rangle$ is quasi-convex, since all cyclic subgroups of hyperbolic groups are quasi-convex. 
If for every $y \in G$ we have $\langle y \rangle \cap \langle x \rangle \neq 1$, then {$G$ does not contain any non-abelian free subgroups and thus is virtually cyclic (for example, see \cite[Remark 3.1.A(b)]{gromov_87})}, a contradiction.
Hence,
 by \cref{qc_powers} there is some element $y\in G$ such that $\langle x, y\rangle$ is isomorphic to the free group of rank two and is quasi-convex in $G$. Let $F = \comm_G(\langle x, y\rangle)$. As $G$ is not virtually free, $\langle x, y\rangle$ has infinite index in $G$. By \cref{qc_commensurator}, $F$ is quasi-convex, it is its own commensurator, and also has infinite index in $G$. Moreover, $F$ is free as it contains $\langle x, y\rangle$ as a subgroup of finite index.

By applying \cref{quasi-convex_intersection} twice -- once with $F$ in the place of $K$, and once with $F$ in the place of $K$ and of $H$ -- and taking the union of the two families of subgroups, we find that there is a finite collection of finitely generated subgroups $F_1, \dots, F_m\leqslant F$ such that:
\begin{enumerate}
\item \label{item 1}If $F\cap {H_i}^g \neq 1$ for some $i$ and some $g\in G$, then $F\cap {H_i}^g$ is conjugate in $F$ to some $F_j$,
\item \label{item 2} If $F\cap F^g\neq 1$ for some $g\notin F$, then $F\cap F^g$ is conjugate in $F$ to some $F_j$.
\end{enumerate}
By \cref{commensurator}, $F\cap F^g$ has infinite index in $F$ for all $g\notin F$. As $x\in F$, we also have that $F\cap {H_i}^g$ has infinite index in $F$ for each $i$ and all $g\in G$. Now \cref{malnormal_free} produces for us a subgroup $H$ of $F$ that is free of rank $n$, malnormal in $F$, and satisfies $H \cap {F_i}^g = 1$ for all $g \in F$. By \eqref{item 2}, the subgroup $H$ is in fact malnormal in $G$, and  by \eqref{item 1}, it satisfies $H \cap {H_i}^g = 1$ for all $g \in G$. Finally, $H$ is quasi-convex in $G$ since $F$ is. We conclude that $H$ is the desired free subgroup.
\end{proof}

The following corollary is central to the proof of our main results.

\begin{corollary}
\label{quasi-convex_subgroup}
Let $G$ be a non-elementary torsion-free hyperbolic group that satisfies the strong Atiyah conjecture. There exists a pair of isomorphic quasi-convex free subgroups $H, F\leqslant G$, such that $F$ is malnormal, ${H\cap F^g} = 1$ for all $g\in G$, and the restriction map
\[
H^1(G, \D_{\Q G})\to H^1(H, \D_{\Q G})
\]
is an isomorphism.
\end{corollary}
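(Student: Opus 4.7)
The plan is to combine the Freiheitssatz (\cref{freiheit}) with \cref{conjugacy_separated_subgroup_intro}. First, I would apply \cref{freiheit} to a finite generating set $\{g_1, \dots, g_n\}$ of $G$ to obtain a subset $\{h_1, \dots, h_k\}$ freely generating a subgroup $H_0 \leqslant G$ with the property that, for every tuple $\mathbf{m} = (m_1, \dots, m_k)$ of positive integers, the subgroup $H_{\mathbf{m}} := \langle h_1^{m_1}, \dots, h_k^{m_k}\rangle$ is free of rank $k$ and the restriction
\[
H^1(G, \D_{\Q G}) \to H^1(H_{\mathbf{m}}, \D_{\Q G})
\]
is an isomorphism. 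The flexibility in choosing $\mathbf{m}$ is the crucial feature of \cref{freiheit} that I will exploit.

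Next, I would inductively choose the exponents $m_i$ so as to arrange quasi-convexity. Writing $K_i := \langle h_1^{m_1}, \dots, h_i^{m_i}\rangle$, the subgroup $K_1$ is cyclic and therefore quasi-convex. For the inductive step, since $H_0$ is freely generated by $h_1, \dots, h_k$, we have
\[
\langle h_i\rangle \cap K_{i-1} \leqslant \langle h_i\rangle \cap \langle h_1, \dots, h_{i-1}\rangle = 1,
\]
so \cref{qc_powers} applies and, for all sufficiently large $m_i$, yields that $K_i \isom K_{i-1} * \langle h_i^{m_i}\rangle$ is quasi-convex in $G$. After $k$ steps, $H := K_k$ is a quasi-convex free subgroup of rank $k$ for which the restriction map is an isomorphism.

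It remains to produce $F$ via \cref{conjugacy_separated_subgroup_intro}, which requires $H$ to have infinite index in $G$. If $G$ is not free then, being torsion-free, it is not virtually free by \cite[Theorem B]{Swan1969}, and hence every free subgroup of $G$ has infinite index. If $G$ is itself free, I would in addition arrange $m_i \geqslant 2$ for each $i$; then $H$ is a proper subgroup of $G$ of the same rank as $G$, and the Nielsen--Schreier rank formula forces $H$ to have infinite index. I may now feed $H_1 = H$ into \cref{conjugacy_separated_subgroup_intro} with rank parameter $n = k$ to obtain a malnormal quasi-convex free subgroup $F\leqslant G$ of rank $k$ such that $H \cap F^g = 1$ for all $g \in G$. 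Since both $H$ and $F$ are free of rank $k$, they are isomorphic, completing the argument.

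The main obstacle is that neither \cref{freiheit} nor \cref{conjugacy_separated_subgroup_intro} alone provides a free subgroup that is simultaneously quasi-convex in $G$ and carries the restriction-isomorphism property. The resolution is precisely the flexibility built into \cref{freiheit} that permits arbitrary positive powers of the chosen generators, which meshes cleanly with \cref{qc_powers} to upgrade $H_0$ to a quasi-convex subgroup while preserving the cohomological property.
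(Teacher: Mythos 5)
Your proposal is correct and follows essentially the same route as the paper: apply \cref{freiheit}, upgrade the resulting free subgroup to a quasi-convex one by choosing the exponents inductively via \cref{qc_powers} (starting from quasi-convexity of cyclic subgroups), and then feed it into \cref{conjugacy_separated_subgroup_intro} to produce the malnormal partner $F$. The only, harmless, difference is how infinite index of $H$ is secured: the paper simply fixes $m_1=2$, whereas you argue via Swan's theorem when $G$ is not free and via a Nielsen--Schreier rank count when $G$ is free.
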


\begin{proof}
By \cref{freiheit}, there exists a finite set of elements $h_1, \dots, h_l\in G$ that generate a free subgroup of rank $l$, such that for every $l$-tuple $\mathbf m = (m_1, \dots,m_l)$ of positive integers, the map 
\[
H^1(G, \D_{\Q G})\to H^1(H_{\mathbf m}, \D_{\Q G})
\]
is an isomorphism, where $H_{\mathbf m} = \langle h_1^{m_1}, \dots, h_l^{m_l}\rangle$. We pick $m_1 = 2$. By \cref{qc_powers} and induction, $H_{\mathbf m}$ is quasi-convex in $G$ for some choice of $\mathbf m$; the induction starts with the statement that cyclic subgroups of $G$ are quasi-convex. Since $m_1 = 2$, the subgroup $H = H_\mathbf m$ is of infinite index in $G$. The result now follows from \cref{conjugacy_separated_subgroup_intro}, by taking $k=1, H_1 = H =  H_\mathbf m$, and $n=l$.
\end{proof}

\section{Characterising virtually free-by-cyclic groups}

We are now ready to prove the strongest result of the article, which is the cornerstone of
 \cref{main_2}.

\begin{mainthm}[\ref{main_v2_intro}]
Let $H$ be a hyperbolic and virtually compact special group with $\cd_{\Q}(H)\geqslant 2$. There is some finite index subgroup $L\leqslant H$ and a commutative diagram of exact sequences of groups
\[
\begin{tikzcd}
1 \arrow[r] & K \arrow[d, hook] \arrow[r] & L \arrow[d, hook] \arrow[r] & \mathbb{Z} \ar[equal]{d} \arrow[r] & 1 \\
1 \arrow[r] & N \arrow[r]                 & G \arrow[r]                 & \mathbb{Z} \arrow[r]                                & 1
\end{tikzcd}
\]
with the following properties:
\begin{enumerate}
\item $G$ is hyperbolic, compact special, and contains $L$ as a quasi-convex subgroup.
\item $\cd_{\Q}(G) = \cd_{\Q}(H)$.
\item $N$ is finitely generated.
\item If $b_i^{(2)}(H) = 0$ for all $2\leqslant i\leqslant n$, then $N$ is of type $\fp_n(\Q)$.
\item If $b_i^{(2)}(H) = 0$ for all $i\geqslant 2$, then $\cd_{\Q}(N) = \cd_{\Q}(H) - 1$.
\end{enumerate}
\end{mainthm}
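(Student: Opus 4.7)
The plan is to construct $G$ as an HNN extension of a finite-index subgroup of $H$, chosen so that its first $L^2$-cohomology vanishes, and then to invoke Fisher's fibring theorem. The central input is \cref{quasi-convex_subgroup}, which supplies a pair of isomorphic quasi-convex free subgroups along which to glue.

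First, I would pass to a torsion-free compact special finite-index subgroup $L_0 \leqslant H$, which exists by Agol's theorem; since $L_0$ is RFRS it satisfies the strong Atiyah conjecture, and since $\cd_{\Q}(L_0) = \cd_{\Q}(H) \geqslant 2$ the group $L_0$ is non-elementary. Applying \cref{quasi-convex_subgroup} to $L_0$ produces isomorphic quasi-convex free subgroups $H_0, F \leqslant L_0$ with $F$ malnormal, $H_0 \cap F^g = 1$ for every $g \in L_0$, and the restriction $H^1(L_0, \D_{\Q L_0}) \to H^1(H_0, \D_{\Q L_0})$ an isomorphism. Letting $\phi \colon H_0 \to F$ denote the isomorphism, I would form the HNN extension $G_0 = L_0 *_\phi$. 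The Bestvina--Feighn combination theorem, applied with the quasi-convexity, malnormality, and disjointness hypotheses, shows that $G_0$ is hyperbolic with $L_0$ quasi-convex, and standard cubulation combination results together with Agol's theorem upgrade $G_0$ to being virtually compact special. The HNN bound $\cd_{\Q}(G_0) \leqslant \max\{\cd_{\Q}(L_0), 2\}$ combined with the inclusion $L_0 \hookrightarrow G_0$ forces $\cd_{\Q}(G_0) = \cd_{\Q}(H)$.

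The next step is to show $b_1^{(2)}(G_0) = 0$. Bieri's Mayer--Vietoris sequence for HNN extensions with coefficients in $\D_{\Q G_0}$ yields
\[
0 \to H^1(G_0, \D_{\Q G_0}) \to H^1(L_0, \D_{\Q G_0}) \to H^1(H_0, \D_{\Q G_0}),
\]
where the initial zero uses that $H^0(H_0, \D_{\Q G_0}) = 0$ as $H_0$ is infinite. Since $L_0$ is finitely generated and $\D_{\Q L_0}$ embeds as a sub-skew-field of $\D_{\Q G_0}$, the isomorphism from \cref{quasi-convex_subgroup} persists after flat change of rings, exactly as in the proof of \cref{independent}, forcing the rightmost arrow above to be an isomorphism and hence $H^1(G_0, \D_{\Q G_0}) = 0$. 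Continuing up the sequence and using $\cd(H_0) = 1$, the assumption that $b_i^{(2)}(H)$ vanishes for $2 \leqslant i \leqslant n$ propagates to $b_i^{(2)}(G_0) = 0$ for all $i \leqslant n$, with total vanishing propagating likewise.

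Finally, I would apply Fisher's fibring theorem: as $G_0$ is virtually RFRS with $b_1^{(2)}(G_0) = 0$, it has a finite-index subgroup $G$ (which one may take to be actually compact special by further refinement) equipped with a surjection $\chi \colon G \to \Z$ whose kernel $N$ is finitely generated. The higher-dimensional vanishing statements upgrade $N$ to type $\fp_n(\Q)$, and in the totally-vanishing case the general bound $\cd_{\Q}(N) \in \{\cd_{\Q}(G) - 1, \cd_{\Q}(G)\}$ combined with $b_i^{(2)}(G) = 0$ for all $i \geqslant 2$ and a Hochschild--Serre argument forces $\cd_{\Q}(N) = \cd_{\Q}(H) - 1$. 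Setting $L = L_0 \cap G$, a finite-index subgroup of $H$, yields the desired commutative diagram. The principal obstacle is guaranteeing that $\chi|_L$ is surjective onto $\Z$: the freedom in choosing rational characters afforded by $b_1^{(2)}(G_0) = 0$ reduces this to exhibiting a non-trivial $\phi$-invariant character of $L_0$, which one arranges by first replacing $L_0$ by a further finite-index subgroup whose first Betti number exceeds the rank of $H_0$, an option made available by the RFRS condition.
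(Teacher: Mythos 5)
Your construction of $G_0$ follows the paper's route (the paper's Proposition \ref{betti_extension}): use \cref{quasi-convex_subgroup} to produce the malnormal/disjoint pair of quasi-convex free subgroups, form the HNN extension, get hyperbolicity and virtual compact specialness from a combination theorem (the paper packages this as \cref{combination_theorem}, quoting Wise's quasiconvex-hierarchy theorem -- ``standard cubulation results plus Agol'' undersells what is needed, but the required statement exists), and then kill $b_1^{(2)}$ via the Mayer--Vietoris sequence before invoking Fisher. One small imprecision there: the map $H^1(L_0,\D_{\Q G_0})\to H^1(H_0,\D_{\Q G_0})$ in Bieri's sequence is not the plain restriction but the twisted difference $X-tY$ of the two edge maps, so its injectivity does not follow ``exactly as in \cref{independent}'' (which only proves surjectivity from surjectivity of $X$); you need the Laurent-series argument with $X$ invertible, or the paper's alternative Euler-characteristic count combined with \cref{independent}. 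This is fixable and not the real issue.

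The genuine gap is the final step, which you correctly identify as ``the principal obstacle'' but do not resolve. Fisher's theorem hands you \emph{some} finite-index subgroup $G\leqslant G_0$ and \emph{some} character $\chi\colon G\to\Z$ with kernel $N$ of type $\fp_n(\Q)$; it gives no freedom to prescribe its restriction to $L=L_0\cap G$, and $\chi|_L$ may a priori be trivial. Your proposed fix -- produce a non-trivial $\phi$-invariant character of $L_0$ after passing to a finite-index subgroup with $b_1$ exceeding the rank of $H_0$ -- is unjustified on both counts: the rank of the free subgroup supplied by \cref{freiheit} for a subgroup of index $k$ is $k\,b_1^{(2)}(L_0)+1$, while by L\"uck approximation $b_1$ of such subgroups grows like $k\,b_1^{(2)}(L_0)$, so the desired inequality cannot in general be arranged (and when $b_1^{(2)}(L_0)=0$, RFRS only guarantees $b_1\geqslant 1$, not $\geqslant 2$); moreover, even given such a character of $G_0$, you would still need a \emph{fibred} character of the finite-index subgroup $G$ that is non-zero on $L$, which would require an additional openness/BNSR argument you do not supply. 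The paper resolves this completely differently, using finite generation of $N$ and Bass--Serre theory: $G$ acts on the Bass--Serre tree of the splitting $G_0=L_0*_{\phi}$, and if every conjugate of $L_0$ met $G$ inside $N$, then $N$ would contain the normal closure $J$ of the vertex groups, $\phi$ would factor through the free group $\pi_1(T/G)$, and finite generation of $N$ would force $\pi_1(T/G)\isom\Z$ and $N=\normal{L_0}\cap G$; but $\normal{L_0}$ is infinitely generated because the HNN extension has proper associated subgroups, a contradiction. Hence some conjugate of $L_0$ intersects $G$ outside $N$, and that intersection (after passing to a further finite-index subgroup of $G$ containing $N$ to make the image all of $\Z$) is the sought $L$. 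Without this argument, or a worked-out substitute, your proof of the existence of the commutative diagram is incomplete; note also that in the case $b_1^{(2)}(H)=0$ the paper sidesteps the issue entirely by applying Fisher directly to a finite-index subgroup of $H$, whereas your uniform HNN construction creates the problem even there.
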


Before proceeding with the proof, we first discuss some of the ingredients going into it. The key result we will be using is the following, due to Fisher \cite{fisher_21}, building on \cite{kielak_20}.

\begin{theorem}
\label{fisher_fibring}
Let $G$ be a virtually RFRS group of type $\fp_n(\Q)$. Then the following are equivalent:
\begin{enumerate}
\item $b_i^{(2)}(G) = 0$ for all $i\leqslant n$.
\item There is a subgroup $H\leqslant G$ of finite index that contains a normal subgroup $K\leqslant H$ such that $H/K\isom \Z$ and $K$ is of type $\fp_n(\Q)$.
\end{enumerate}
\end{theorem}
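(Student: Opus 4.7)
The strategy is to construct $G$ as an HNN extension of (a finite-index subgroup of) $H$, gluing along the pair of quasi-convex free subgroups furnished by \cref{quasi-convex_subgroup}, and then apply Fisher's fibring theorem (\cref{fisher_fibring}) to obtain a fibration whose kernel enjoys the desired finiteness properties.

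First, I would replace $H$ by a torsion-free compact special finite-index subgroup $H' \leqslant H$, which exists by Agol's theorem. Since $\cd_\Q(H) \geqslant 2$, the group $H'$ is non-elementary torsion-free hyperbolic, and virtual RFRS-ness yields the strong Atiyah conjecture. Apply \cref{quasi-convex_subgroup} to $H'$, obtaining quasi-convex free subgroups $H_0, F \leqslant H'$ with $H_0 \cong F$, $F$ malnormal, $H_0 \cap F^g = 1$ for all $g \in H'$, and $H_0$ being $L^2$-independent in $H'$ in the sense that the restriction $H^1(H', \D_{\Q H'}) \to H^1(H_0, \D_{\Q H'})$ is an isomorphism. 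Form the HNN extension $G = H' \ast_{H_0 = F}$ via an isomorphism $\phi\colon H_0 \to F$. The malnormality of $F$ together with the disjoint-conjugates condition provides the annulus flare hypothesis of the Bestvina--Feighn combination theorem, so $G$ is hyperbolic; work of Wise/Haglund--Wise on quasi-convex hierarchies then yields that $G$ is virtually compact special, and a further pass to a finite-index subgroup makes $G$ itself compact special. Take $L$ to be (a finite-index subgroup of) $H'$ sitting as a quasi-convex subgroup of $G$.

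For the agrarian computations, apply Bieri's Mayer--Vietoris sequence for the HNN extension, as deployed in the proof of \cref{independent}. Combined with the $L^2$-independence of $H_0$ in $L$, this yields $\cd_\Q(G) = \cd_\Q(L) = \cd_\Q(H)$, since $H_0$ contributes only in low dimension, and controls $b_i^{(2)}(G)$ by $b_i^{(2)}(L)$ for $i \geqslant 2$. For the crucial vanishing $b_1^{(2)}(G) = 0$, one selects the gluing isomorphism $\phi$ so that the difference $r_0 - r_F \colon H^1(L, \D_{\Q G}) \to H^1(H_0, \D_{\Q G})$ is injective; since $r_0$ is an isomorphism by construction, this amounts to ensuring that $r_0^{-1} r_F$ does not have $1$ as an eigenvalue, which can be arranged by generic choice of $\phi$. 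From Mayer--Vietoris and multiplicativity, the vanishing hypotheses on $b_i^{(2)}(H)$ for $2 \leqslant i \leqslant n$ translate to the same vanishing for $G$ up to degree $n$.

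Now $G$ is virtually RFRS (being compact special) and of type $\fp_n(\Q)$, and so \cref{fisher_fibring} produces, after passing to a further finite-index subgroup of $G$ (and replacing $L$ by its intersection with it), a surjection $G \to \Z$ whose kernel $N$ is of type $\fp_n(\Q)$. The restriction to $L$ produces a map $L \to \Z$ with kernel $K = L \cap N$; the main obstacle is showing this restriction is non-trivial. The key observation is that if it were trivial, then $L \subseteq N$, and hence $N$ would contain the normal closure of $L$ in $G$; but since $H_0$ has infinite index in $L$, this normal closure is precisely the kernel of the natural HNN character $G \to \Z$ sending the stable letter to $1$ and all of $L$ to $0$, and it is manifestly not finitely generated, contradicting Fisher's guarantee. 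Hence the restriction to $L$ is non-trivial, yielding the commutative diagram. The higher finiteness assertion on $N$ is immediate from Fisher, and the bound $\cd_\Q(N) \leqslant \cd_\Q(G) - 1$ in the presence of full vanishing follows from Fisher's theorem together with a Stallings-type argument (applied to the $\Z$-quotient), with equality then forced by $\cd_\Q(G) = \cd_\Q(H)$.
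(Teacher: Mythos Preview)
You have proved the wrong theorem. The statement you were asked to establish is \cref{fisher_fibring}, Fisher's virtual fibring criterion for virtually RFRS groups. In the paper this is not proved at all: it is quoted from \cite{fisher_21} (building on \cite{kielak_20}) and used as a black box. Your proposal, by contrast, is an outline of the proof of \cref{main_v2_intro}, and it explicitly \emph{invokes} \cref{fisher_fibring} as an ingredient. So as a proof of the stated theorem it is circular.

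If one reads your write-up instead as a sketch of \cref{main_v2_intro}, it is close in spirit to the paper's argument but has a genuine gap in the step where you force $b_1^{(2)}(G)=0$. You propose to choose the gluing isomorphism $\phi$ generically so that $r_0^{-1}r_F$ avoids the eigenvalue $1$. This is not how the Mayer--Vietoris map for an HNN extension looks: the two restriction maps are twisted by the stable letter $t$, so the relevant operator lives over $\D_{\Q G}$, not over $\D_{\Q H'}$, and a ``generic $\phi$'' argument over the fixed skew-field $\D_{\Q H'}$ does not obviously transfer. The paper sidesteps this entirely: it uses \cref{independent} (via the Laurent-series embedding of \cref{Laurent series}) to get $b_2^{(2)}(G)\leqslant b_2^{(2)}(H')$, and then an Euler-characteristic count on the five-term sequence forces $b_1^{(2)}(G)=0$ without any choice of $\phi$. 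Your remaining steps (Bestvina--Feighn plus Wise for hyperbolicity and specialness, Bass--Serre theory to see that $L\to\Z$ is non-trivial, and the cohomological-dimension bookkeeping) match the paper's approach, though the paper handles the non-triviality of $L\to\Z$ with somewhat more care via the induced graph-of-groups decomposition of the finite-index subgroup.
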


As discussed previously, we shall be using the RFRS property as a black box and therefore do not define it here. The relevant class of examples of (virtually) RFRS groups the reader should have in mind are \emph{compact special groups}. Introduced in \cite{haglund_08} by Haglund and Wise, compact special groups are fundamental groups of compact non-positively curved cube complexes whose hyperplanes behave in a controlled manner. By \cite[Theorem 1.1]{haglund_08}, compact special groups are subgroups of right-angled Artin groups, which in turn are virtually RFRS by a result of Agol \cite[Corollary 2.3]{Agol2008} (they are in fact RFRS, but we will not need this). Therefore, \cref{fisher_fibring} applies to any  virtually compact special group. 

Another important fact about virtually compact special groups we shall need is that they satisfy the strong Atiyah conjecture by \cite{schreve_14}. We collect below some of the statements we have mentioned. Note that it is precisely the lack of a suitable replacement for Schreve's theorem that stops us from formulating the following results for agrarian homology with coefficients being Jaikin-Zapirain's skew-fields. Fisher's theorem above works in this general context.

\begin{proposition}
\label{special_facts}
Let $G$ be a virtually compact special group. Then $G$ satisfies the strong Atiyah conjecture and is virtually RFRS. In particular, it has a torsion-free finite-index subgroup that is compact special and RFRS.
\end{proposition}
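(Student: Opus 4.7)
The plan is to assemble three already-known ingredients: Schreve's theorem for the strong Atiyah conjecture, the Haglund--Wise embedding of compact special groups into right-angled Artin groups, and Agol's result that right-angled Artin groups are RFRS. All three claims in the proposition then follow by routine bookkeeping rather than by new arguments.

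For the strong Atiyah conjecture, I would simply invoke Schreve's main theorem \cite[Theorem 1.2]{schreve_14}, which establishes the strong Atiyah conjecture for virtually cocompact special groups and hence, a fortiori, for every virtually compact special group. Next, to obtain virtual RFRS and the ``in particular'' conclusion simultaneously, I would unfold the definition of virtually compact special to produce a finite-index subgroup $H\leqslant G$ that is compact special. By Haglund--Wise \cite[Theorem 1.1]{haglund_08}, $H$ embeds into some right-angled Artin group $A_\Gamma$; by Agol \cite[Corollary 2.3]{Agol2008}, $A_\Gamma$ is RFRS. Since the RFRS property is inherited by subgroups -- intersecting a witnessing chain of finite-index normal subgroups of $A_\Gamma$ with $H$ yields a witnessing chain for $H$, because the abelianisation condition is preserved under restriction -- the group $H$ is itself RFRS. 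In particular $G$ is virtually RFRS.

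For the last sentence of the proposition, note that right-angled Artin groups are torsion-free, so $H\leqslant A_\Gamma$ is torsion-free; and $H$ is compact special by construction. Therefore the single subgroup $H$ serves as a torsion-free, compact special, RFRS finite-index subgroup of $G$, exactly as required.

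There is no substantive obstacle in this argument: the only thing to be careful about is verifying that the three desired properties (compact special, torsion-free, RFRS) can be arranged to hold for one and the same finite-index subgroup, rather than three different ones. The embedding $H\injects A_\Gamma$ into a torsion-free RFRS group gives all three at once, so the proof reduces to citing and combining the appropriate results.
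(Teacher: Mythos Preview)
Your proposal is correct and mirrors exactly the argument the paper gives in the paragraph preceding the proposition: Schreve~\cite{schreve_14} for the strong Atiyah conjecture, the Haglund--Wise embedding~\cite{haglund_08} of a compact special finite-index subgroup into a right-angled Artin group, and Agol~\cite{Agol2008} for RFRS, with the single subgroup $H$ witnessing all three properties at once. The only cosmetic difference is that the paper phrases Agol's result as ``virtually RFRS (they are in fact RFRS)'' while you invoke RFRS directly; since right-angled Artin groups are indeed RFRS, this does not affect the argument.
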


In order to prove \cref{main_v2_intro}, we embed a finite index subgroup of $H$ into a virtually compact special group $G$ with vanishing $L^2$-Betti numbers. However, in order to do this, virtual compact specialness of $H$ does not suffice and so we must add a hyperbolicity assumption. This is needed in order to apply the following combination theorem, obtained from Wise \cite[Theorem 13.1]{wise_21_quasiconvex} and Kharlampovich--Myasnikov \cite[Theorem 5]{kharlampovich_98}.

\begin{theorem}
\label{combination_theorem}
Let $H$ be hyperbolic and virtually compact special, let $A, B\leqslant H$ be quasi-convex subgroups and let $\psi\colon A\to B$ be an isomorphism. If $A$ is malnormal and $A\cap B^g = 1$ for all $g\in H$, then 
\[
H*_A \isom \langle H, t \mid t^{-1}at = \psi(a), a\in A\rangle.
\]
is hyperbolic, virtually compact special and contains $H$ as a quasi-convex subgroup.
\end{theorem}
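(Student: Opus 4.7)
The statement is asserted as a combination of two results in the literature, and the plan is simply to verify the hypotheses of each in turn. The hyperbolicity of $H*_A$ together with the quasi-convex embedding of $H$ will follow from the Kharlampovich--Myasnikov combination theorem \cite{kharlampovich_98}, while the virtual compact specialness will follow from Wise's malnormal quasi-convex hierarchy theorem \cite{wise_21_quasiconvex}.

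For the first step, the input required by \cite{kharlampovich_98} is a hyperbolic group $H$ with two quasi-convex associated subgroups and a suitable separatedness condition: namely, malnormality of at least one edge group together with trivial intersection between the two edge groups and all their $H$-conjugates. Our hypotheses supply exactly this: $H$ is hyperbolic and $A$, $B$ are quasi-convex by assumption, $A$ is malnormal in $H$, and the identity $A \cap B^g = 1$ for all $g \in H$ gives, by replacing $g$ with $g^{-1}$, the symmetric statement $A^g \cap B = 1$ for all $g$. These are precisely the conditions needed to conclude that $H *_A$ is hyperbolic and that $H$ embeds as a quasi-convex subgroup. A short Britton normal form argument, using the two hypotheses on $A$, further shows that $A$ remains malnormal inside $H *_A$; since $B = t^{-1} A t$ in $H *_A$, the edge group $B$ is then also malnormal.

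For the virtually compact special conclusion, I would pass to a torsion-free compact special finite-index subgroup $H_0 \leq H$ guaranteed by \cref{special_facts}, and then restrict the HNN data to $H_0$. Because $A$ is malnormal and conjugacy-separated from $B$ in $H$, the $H_0$-conjugacy classes of $A\cap H_0$ and of $B\cap H_0$ are finite in number and inherit the same malnormality and separatedness properties; combining them into a graph-of-groups one obtains a finite-index subgroup $G_0 \leq H *_A$ that is an iterated HNN extension of $H_0$ along finitely many malnormal quasi-convex edge groups. Thus $G_0$ carries a one-step malnormal quasi-convex hierarchy terminating in a compact special group, which is precisely the input to Wise's theorem \cite{wise_21_quasiconvex}. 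Applying it yields that $G_0$ is virtually compact special, and hence so is $H *_A$.

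The main technical obstacle is the passage to $H_0$: one must verify that after intersecting with $H_0$ the edge data remains malnormal and conjugacy-separated, and that the collection of HNN extensions one forms genuinely reassembles into a finite-index subgroup of $H *_A$. This is a routine but bookkeeping-heavy application of Bass--Serre theory and covering-space theory for graphs of groups, and requires no assumptions beyond those already in force.
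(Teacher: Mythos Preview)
The paper does not supply a proof of this theorem; it is stated as a direct consequence of the two cited results, with no further argument. Your proposal is therefore essentially correct and aligned with the paper's intended derivation: Kharlampovich--Myasnikov for hyperbolicity of $H*_A$ and quasi-convexity of $H$, and Wise for virtual compact specialness.

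That said, your route to Wise's theorem is more circuitous than necessary. Wise's combination theorem \cite[Theorem 13.1]{wise_21_quasiconvex} applies directly to a hyperbolic group splitting along a malnormal quasi-convex edge group over a virtually compact special vertex group; there is no need to first pass to a torsion-free compact special $H_0\leqslant H$ and rebuild a finite-index graph of groups inside $H*_A$. Once Kharlampovich--Myasnikov gives you that $H*_A$ is hyperbolic with $H$ quasi-convex, the edge group $A$ is automatically quasi-convex in $H*_A$ (being quasi-convex in the quasi-convex $H$), and your Britton normal-form argument upgrades malnormality of $A$ from $H$ to $H*_A$. These are exactly the hypotheses of Wise's theorem, which then yields virtual compact specialness of $H*_A$ in one step. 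Your detour through $H_0$ is not wrong, but it adds genuine bookkeeping (the Bass--Serre covering argument you flag as ``routine but bookkeeping-heavy'') that the direct citation avoids entirely.
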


We now prove our embedding result.

\begin{proposition}
\label{betti_extension}
Let $H$ be a non-free torsion-free hyperbolic compact special group with $b_1^{(2)}(H)\neq 0$. There exists a hyperbolic virtually compact special group $G$, isomorphic to an HNN-extension of $H$ with finitely generated quasi-convex free associated subgroups, such that the following holds:
\[
b_i^{(2)}(G) =
\begin{cases}
0 & \text{ if $i = 1$}\\
b_i^{(2)}(H) & \text{ if $i\neq 1$.}
\end{cases}
\]
Moreover, $H$ is a quasi-convex subgroup of $G$ and, for any commutative ring $R$, we have $\cd_R(G) = \cd_R(H)$.
\end{proposition}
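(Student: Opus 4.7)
The plan is to realise $G$ as an HNN-extension of $H$ over carefully chosen isomorphic quasi-convex free subgroups, and then to extract every claim from Bieri's Mayer--Vietoris sequence. Since $H$ is non-free, torsion-free and hyperbolic, it is non-elementary; by \cref{special_facts} it satisfies the strong Atiyah conjecture, so \cref{quasi-convex_subgroup} applies to $H$ (in the role of $G$) and yields isomorphic quasi-convex free subgroups $A, B \leqslant H$ with $B$ malnormal in $H$, $A \cap B^g = 1$ for all $g \in H$, and the restriction $H^1(H, \D_{\Q H}) \to H^1(A, \D_{\Q H})$ an isomorphism. In particular \cref{L2 Betti of Fn} pins down $\rk(A) = \rk(B) = b_1^{(2)}(H) + 1$. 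Choosing an isomorphism $\psi \colon B \to A$ and forming the HNN-extension $G = H *_B$ via $\psi$, \cref{combination_theorem} immediately provides hyperbolicity, virtual compact specialness, and the quasi-convex embedding of $H$ in $G$; the associated subgroups are finitely generated, quasi-convex, and free by construction.

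For the $L^2$-Betti numbers I would invoke Bieri's Mayer--Vietoris sequence with coefficients $M = \D_{\Q G}$:
\[
\cdots \to H^{i-1}(B, M) \to H^i(G, M) \to H^i(H, M) \xrightarrow{\alpha^* - \beta^*} H^i(B, M) \to \cdots.
\]
Freeness of $B$ gives $H^i(B, M) = 0$ for $i \geqslant 2$, so $b_i^{(2)}(G) = b_i^{(2)}(H)$ holds trivially for $i \geqslant 3$. Both remaining degrees reduce to the surjectivity of $\alpha^* - \beta^*$ in degree one: combined with the vanishing $H^0(B, M) = 0$ (which holds because $B$ is non-trivial and $\D_{\Q G}$ is a skew-field, so any invariant is annihilated by $b-1$ for any non-trivial $b\in B$), this surjectivity yields $b_2^{(2)}(G) = b_2^{(2)}(H)$ and the short exact sequence
\[
0 \to H^1(G, M) \to H^1(H, M) \to H^1(B, M) \to 0,
\]
from which a dimension count gives $b_1^{(2)}(G) = b_1^{(2)}(H) - (\rk(B) - 1) = 0$.

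The main technical input is therefore the surjectivity of the first-degree differential, which I would establish by the Laurent series manoeuvre already deployed in the proof of \cref{independent}. Letting $N$ denote the normal closure of $H$ in $G$, one has $G = N \rtimes \Z$ with generator $t$ and $\D_{\Q G} \hookrightarrow \D_{\Q N}[t^{\pm 1}\rrbracket$ by \cref{Laurent series}. In suitable bases the two restriction maps are represented by matrices $X$ and $tY$ over $\D_{\Q N}$, and the $L^2$-independence of $A$ in $H$ makes the twisted restriction a surjection over $\D_{\Q N}$, hence provides an invertible square submatrix. The standard geometric-series argument then shows that the corresponding square submatrix of $X - tY$ is invertible over $\D_{\Q N}[t^{\pm 1}\rrbracket$, and so $\alpha^* - \beta^*$ is surjective over $\D_{\Q G}$. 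I expect the main delicacy to sit exactly here: compared with \cref{independent}, the surjective restriction is the twisted one rather than the direct one, which just switches the roles of $X$ and $Y$ (and, if convenient, the completion in $t^{-1}$ versus $t$) inside the invertibility argument.

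For the cohomological dimension claim, applying the same Mayer--Vietoris to an arbitrary $RG$-module $M$ gives $\cd_R(G) \leqslant \max\{\cd_R(H), \cd_R(B) + 1\} \leqslant \max\{\cd_R(H), 2\}$. Since $H$ is non-free and torsion-free, $\cd_R(H) \geqslant 2$, so this simplifies to $\cd_R(G) \leqslant \cd_R(H)$. The vanishing $H^n(B, -) = 0$ for $n \geqslant 2$ additionally makes $H^n(G, -) \to H^n(H, -)$ surjective in those degrees, so any non-vanishing on $H$ propagates to $G$, yielding the matching lower bound and finishing the proof.
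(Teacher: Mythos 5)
Your proposal is correct and follows essentially the same route as the paper: the same pair of subgroups supplied by \cref{quasi-convex_subgroup}, the same HNN-extension via \cref{combination_theorem}, and the same Mayer--Vietoris plus Laurent-series analysis for both the $L^2$-Betti numbers and the cohomological dimension. The only difference is bookkeeping: the paper cites \cref{independent} to get $b_2^{(2)}(G)\leqslant b_2^{(2)}(H)$ and deduces $b_1^{(2)}(G)=0$ by an Euler-characteristic count, whereas you prove surjectivity of the degree-one differential directly (correctly noting that the $L^2$-independent copy is the twisted one, so the roles of $X$ and $Y$ and the direction of the $t$-completion must be swapped) and then read off both Betti numbers from the resulting short exact sequence together with $\rk(B)=b_1^{(2)}(H)+1$.
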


\begin{proof}
By \cref{special_facts}, $H$ satisfies the strong Atiyah conjecture. As $H$ is torsion-free and non-free, it is non-elementary. By \cref{quasi-convex_subgroup}, there is some quasi-convex non-trivial free subgroup $B\leqslant H$, necessarily of infinite index, such that the restriction map 
\[
H^1(H, \mathcal{D}_{\Q H})\to H^1(B, \mathcal{D}_{\Q H})
\]
 is an isomorphism. In particular, $B$ is $L^2$-independent in $H$. Also by \cref{quasi-convex_subgroup}, there is a subgroup $A$, isomorphic to $B$, such that $A$ is malnormal and no conjugate of $A$ intersects $B$ non-trivially. Let $\psi\colon A\to B$ be any isomorphism and let 
\[
G = H*_A \isom \langle H, t \mid t^{-1}at = \psi(a), a\in A\rangle.
\]
By \cref{combination_theorem}, $G$ is hyperbolic and virtually compact special. By \cref{special_facts}, $G$ satisfies the strong Atiyah conjecture.

From \cite[Theorem 3.1]{Bieri1975}, we obtain a long exact sequence
\[
\begin{tikzcd}
0 \arrow[r] & H^0(G, \D_{\Q G}) \arrow[r]\arrow[d, phantom, ""{coordinate, name=Z}] & H^0(H, \D_{\Q G}) \arrow[r] &H^0(A, \D_{\Q G}) \arrow[dll,  rounded corners, to path={ -- ([xshift=2ex]\tikztostart.east)|- (Z) [near end]\tikztonodes-| ([xshift=-2ex]\tikztotarget.west)-- (\tikztotarget)}] \\
 &H^1(G, \D_{\Q G}) \arrow[r]\arrow[d, phantom, ""{coordinate, name=Z}] & H^1(H, \D_{\Q G}) \arrow[r] & H^1(A, \D_{\Q G}) \arrow[dll,  rounded corners, to path={ -- ([xshift=2ex]\tikztostart.east)|- (Z) [near end]\tikztonodes-| ([xshift=-2ex]\tikztotarget.west)-- (\tikztotarget)}] &\\
& H^2(G, \D_{\Q G}) \arrow[r] & H^2(H, \D_{\Q G}) \arrow[r] & H^2(A, \D_{\Q G}) \arrow[r] & \dots
\end{tikzcd}
\]
Since $A$ is free, we have $H^i(A, \D_{\Q G}) = 0$ for all $i\geqslant 2$, inducing isomorphisms between $H^i(G, \D_{\Q G})$ and $H^i(H, \D_{\Q G})$ for all $i\geqslant 3$. Moreover, all of the cohomology groups on the top row are trivial as $G$, $H$ and $A$ are infinite \cite[Theorem 6.54]{luck_02}. Hence, we are left with the exact sequence
\[
\begin{tikzcd}
0\arrow[r] &H^1(G, \D_{\Q G})\arrow[r]\arrow[d, phantom, ""{coordinate, name=Z}]  &H^1(H, \D_{\Q G})\arrow[r] &H^1(A, \D_{\Q G})\arrow[dll,  rounded corners, to path={ -- ([xshift=2ex]\tikztostart.east)|- (Z) [near end]\tikztonodes-| ([xshift=-2ex]\tikztotarget.west)-- (\tikztotarget)}] \\
 &H^2(G, \D_{\Q G})\arrow[r] &H^2(H, \D_{\Q G})\arrow[r] &0
\end{tikzcd}
\]
and equalities
\[
b_i^{(2)}(G) = b_i^{(2)}(H)
\]
for all $i\geqslant 3$. The isomorphism
\[
H^1(H, \mathcal{D}_{\Q H}) \cong H^1(B, \mathcal{D}_{\Q H})
\] 
implies that $b_1^{(2)}(H) = b_1^{(2)}(B)$. But the latter is equal to $b_1^{(2)}(A)$, since $A$ and $B$ are isomorphic. Hence an Euler-characteristic-type count reveals that 
\[
b_1^{(2)}(G) + b_2^{(2)}(H) = b_2^{(2)}(G).
\]
\cref{independent} implies that $b_2^{(2)}(G) \leqslant b_2^{(2)}(H)$, and so $b_1^{(2)}(G) = 0$ and
$b_i^{(2)}(G) = b_i^{(2)}(H)$ for all $i\neq 1$, as desired.

\smallskip
Now let us consider Bieri's long exact sequence once more, this time in cohomology over an arbitrary $RG$ module $M$. As $A$ is free, we have that $H^i(A, M) = 0$ for all $i\geqslant 2$. This implies that the maps $H^i(G, M)\to H^i(H, M)$ are isomorphisms for all $i\geqslant 3$. As the only torsion-free finitely generated groups of cohomological dimension one are free groups \cite{sta_68}, we have that $\cd_R(H)\geqslant 2$. Since $G$ contains $H$, the group $G$ cannot be free either, and so $\cd_R(G)\geqslant 2$. Finally, as $M$ was arbitrary, this implies that $\cd_R(G) = \cd_R(H)$.
\end{proof}

\begin{remark}
\label{simplification}
{There is also a version of \cref{betti_extension} for amalgamated free products which was suggested to us by the anonymous referee. If $K$ is any non-elementary hyperbolic and virtually compact special group with $b_i^{(2)}(K) = 0$ for all $i$ (for example, any atoroidal \{finitely generated free\}-by-$\Z$ group will do), then, just as in the proof of \cref{betti_extension}, we can produce a malnormal and quasi-convex free subgroup $F\leqslant H, K$ such that $G = H*_FK$ satisfies all the same conclusions of \cref{betti_extension}.}
\end{remark}

\begin{proof}[Proof of \cref{main_v2_intro}]
Firstly, we note that $H$ is non-elementary since $\cd_{\Q}(H)\geqslant 2$. Hence $b_0^{(0)}(H) = 0$ by \cite[Theorem 6.54]{luck_02}. As hyperbolicity passes to finite-index subgroups, by \cref{special_facts} there is a finite index subgroup $H'\leqslant H$ that is torsion-free, hyperbolic, and compact special. By \cite[Theorem 6.54]{luck_02}, $L^2$-Betti numbers of finite-index subgroups are multiplicative in the index. This implies that for any given $i$, we have $b_i^{(2)}(H') = 0$ if and only if $b_i^{(2)}(H) = 0$. By \cite[Theorem 9.1]{Swan1969}, we have $\cd_{\Q}(H') = \cd_{\Q}(H)$. 

\smallskip
Suppose first that $b_{1}^{(2)}(H) = 0$. Since $H'$ is torsion-free and hyperbolic, it is of type $\fp(\Q)$. By \cref{special_facts}, we may now apply \cref{fisher_fibring} to $H'$ to conclude that there is some finite-index hyperbolic and compact special subgroup $L\leqslant H'$ and a normal subgroup $K\leqslant L$ such that $L/K\isom \Z$ and such that $K$ is finitely generated and of type $\fp_n(\Q)$ precisely when $b_i^{(2)}(H) = 0$ for all $i\leqslant n$. If $b_i^{(2)}(H) = 0$ for all $i$, then as $\cd_{\Q}(K)<\infty$, we have that $K$ is of type $\fp(\Q)$. By Feldman's theorem~\cite[Theorem 2.4]{feldman_71}, we have $\cd_{\Q}(K) = \cd_{\Q}(H) - 1$. Now the result follows by setting $G = L$ and $N = K$. 

\smallskip
Now assume that $b_1^{(2)}(H) \neq 0$. By Proposition \ref{betti_extension}, there is some hyperbolic and virtually compact special group $G'$, isomorphic to an HNN-extension of $H'$ with finitely generated free associated subgroups, such that $b_i^{(2)}(G') = b_i^{(2)}(H')$ for all $i\neq 1$, $b_1^{(2)}(G) = 0$ and $\cd_{\Q}(G') = \cd_{\Q}(H)$. Moreover, $H'$ is quasi-convex in $G'$. As above, we may produce a finite index subgroup $G\leqslant G'$ and a normal subgroup $N\leqslant G$ with the properties required by the statement. Let $\phi \colon G \to G/N  \cong \Z$ denote the quotient map. All that remains to show is that the desired diagram exists. We may need to pass to a further finite-index subgroup of $G$ that contains $N$; this will not alter any of the properties we require.

We are now going to use some elementary tools from Bass--Serre theory. The reader is directed to Serre's monograph \cite{serre_80} for the necessary background material. Since $G'$ splits as an HNN-extension over $H'$, it acts on the Bass--Serre tree $T$ associated with this splitting, and $T/G'$ is the corresponding graph of groups with a single edge. Now $G$ also acts on $T$ and the quotient $T/G$ yields a graph of groups decomposition of $G$. Since $G$ has finite index in $G'$, $T/G$ is a finite graph. The stabilisers in $G'$ of vertices in $T$ are conjugates of $H'$ and so the vertex groups associated with the induced graph of groups decomposition of $G$ must all be intersections of conjugates of $H'$ with $G$. Denote by $J$ the normal closure of these vertex groups in $G$. We have $G/J\isom \pi_1(T/G)$ which is a free group of finite rank. 

Suppose that the intersection of every conjugate of $H'$ with $G$ is a subgroup of $N$. Then we have $J\leqslant N$. Hence, $\phi$ induces an epimorphism  $\pi_1(T/G)\to \Z$, and $N$ maps onto the kernel of this epimorphism. If this kernel is non trivial, then it cannot be finitely generated as it is an infinite-index normal subgroup of a free group. This would then imply that $N$ surjects a non finitely generated free group which is not possible as $N$ is finitely generated. It follows that $\pi_1(T/G)\isom \Z$ and hence that $J = N$. But this implies that $N\leqslant \normal{H'}\cap G$. Since $G/N\isom \Z$, we must also have $N = \normal{H'}\cap G$. Since $G'$ splits as a HNN-extension over $H'$ with proper associated subgroups, we have that $H'\neq \normal{H'}$. It is now a straightforward consequence of Bass--Serre theory that $\normal{H'}$ cannot be finitely generated. As $\normal{H'}\cap G$ is a finite index subgroup of $\normal{H'}$, this implies that $N$ is also not finitely generated which is a contradiction. Hence there must be some conjugate of $H'$ that does not intersect $G$ in $N$. Let $L$ be the intersection of such a conjugate with $G$. As $L$ has finite index in some conjugate of $H'$, it is also hyperbolic and compact special and so is of type $\fp(\Q)$. Finally, $L$ is quasi-convex in $G$ since all conjugates of $H'$ are quasi-convex in $G'$. After possibly passing to a finite index subgroup of $G$, we may assume that $L\injects G\to G/N = \Z$ is surjective. Then, letting $K\leqslant L$ be the kernel of this map we obtain the desired diagram.
\end{proof}

When $n=2$, the use of Fisher's theorem can be replaced by \cite[Theorem 5.4]{kielak_20}

\begin{corollary}
Let $H$ be a hyperbolic and virtually compact special group satisfying $\cd_{\Q}(H) = n\geqslant 2$ and $b_i^{(2)}(H) = 0$ for all $i\geqslant 2$. Then there is some finite-index subgroup $H'\leqslant H$ such that $H'$ splits as an HNN-extension with finitely generated associated subgroups and with finitely generated base group $A$ satisfying $\cd_{\Q}(A) = n-1$.
\end{corollary}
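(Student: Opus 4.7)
The plan is to apply \cref{main_v2_intro} directly and set $H' = L$. Under the hypotheses of the corollary -- $\cd_\Q(H) = n \geqslant 2$ and $b_i^{(2)}(H) = 0$ for all $i \geqslant 2$ -- parts (3)--(5) of \cref{main_v2_intro} yield a finite-index subgroup $L \leqslant H$ fitting into a short exact sequence
\[
1 \to K \to L \to \Z \to 1
\]
together with an injection $K \hookrightarrow N$, where $N$ is finitely generated of type $\fp(\Q)$ with $\cd_\Q(N) = n - 1$. The semi-direct product $L = K \rtimes \Z$ is tautologically an HNN-extension with base $A = K$ and both associated subgroups equal to $K$, so the entire content of the corollary reduces to showing that $K$ is finitely generated with $\cd_\Q(K) = n - 1$.

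The cohomological dimension computation is short: $K \leqslant N$ gives $\cd_\Q(K) \leqslant n - 1$, and the Lyndon--Hochschild--Serre inequality $\cd_\Q(L) \leqslant \cd_\Q(K) + 1$, combined with $\cd_\Q(L) = \cd_\Q(H) = n$ (by Serre--Swan's finite-index invariance of rational cohomological dimension, as already used in the proof of \cref{main_2}), gives the reverse inequality $\cd_\Q(K) \geqslant n - 1$.

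The main obstacle is verifying that $K$ itself is finitely generated, since this is not recorded explicitly in the statement of \cref{main_v2_intro}. When $b_1^{(2)}(H) = 0$ the point is immediate from the proof of that theorem: Fisher's theorem \cref{fisher_fibring} is then applied directly to $H$ and produces $K = N$ of type $\fp(\Q)$. When $b_1^{(2)}(H) \neq 0$ one has instead $K = L \cap N$ inside the hyperbolic group $G = N \rtimes \Z$, with $L$ quasi-convex; finite generation of $K$ is then recovered by applying Fisher to $G$, all of whose $L^2$-Betti numbers vanish by \cref{betti_extension}, and performing a Bass--Serre analysis on the line dual to the splitting $G = N *_N$, exploiting quasi-convexity of $L$ to force the induced graph of groups for $L$ to have finitely generated vertex and edge stabilisers.
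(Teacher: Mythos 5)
Your reduction of the corollary to ``$K$ is finitely generated'' is the fatal step: in the case $b_1^{(2)}(H)\neq 0$ that statement is simply false, so no amount of Bass--Serre analysis can rescue it. Indeed, $L$ is isomorphic to a finite-index subgroup of $H$, so by multiplicativity of $L^2$-Betti numbers \cite{luck_02}*{Theorem 6.54} we have $b_1^{(2)}(L)=[H:L]\,b_1^{(2)}(H)\neq 0$; on the other hand, L\"uck's vanishing theorem for extensions $1\to K\to L\to\Z\to 1$ with $K$ finitely generated (see \cite{luck_02}*{Section 7}) would force $b_1^{(2)}(L)=0$ if $K$ were finitely generated. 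So in the only hard case of the corollary (the hypotheses allow $b_1^{(2)}(H)\neq 0$, e.g.\ a free product of two surface groups satisfies them), the kernel $K=L\cap N$ is provably infinitely generated. Your proposed mechanism also has no force on its own terms: quasi-convexity of $L$ in $G$ gives no control over intersections with the normal, infinite-index (hence non-quasi-convex, exponentially distorted) subgroup $N$ -- already a rank-two quasi-convex free subgroup of $G$ surjecting onto $\Z$ meets $N$ in an infinitely generated group, by an Euler characteristic count. The case $b_1^{(2)}(H)=0$ of your argument is correct, but there the corollary is immediate from \cref{fisher_fibring} and Feldman's theorem, exactly as in the proof of \cref{main_v2_intro}.

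The paper's proof avoids this trap by not using the tautological splitting of $L=K\rtimes\Z$ at all. It applies the Bieri--Strebel theorem \cite{bieri_78}*{Theorem A} to $H'=L$ (which is of type $\fp_2(\Q)$, being hyperbolic) and the restricted character $\phi\vert_{H'}\colon H'\to\Z$: this produces an HNN splitting of $H'$ with finitely generated base $A$ and finitely generated associated subgroups, where $A$ is contained in $\ker\phi\leqslant N$ but is in general a \emph{proper} subgroup of the (infinitely generated) kernel. That containment gives $\cd_{\Q}(A)\leqslant\cd_{\Q}(N)=n-1$, and Bieri's Mayer--Vietoris sequence, together with $\cd_{\Q}(H')=n$, forces $\cd_{\Q}(A)=n-1$ -- your LHS-type inequality argument for the lower bound is the right idea, but it must be run for the Bieri--Strebel base, not for $K$.
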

\begin{proof}
\cref{main_v2_intro} gives us a finite index subgroup $H'$ of $H$ that embeds into a group $G = N \rtimes \Z$ with $\cd_{\Q}(N) = n-1$. Let $\phi \colon G \to \Z$ denote the projection. Observe that $H'$ is hyperbolic, and hence of type $\fp(\Q)$. Applying  \cite[Theorem A]{bieri_78} to the restriction $\phi\vert_{H'}$, we may write $H'$ as an HNN-extension with finitely generated associated groups, and with base group $A$ finitely generated and being a subgroup of $N$. This latter property implies that $\cd_{\Q}(A) \leqslant  n-1$.
Finally, Bieri's Mayer--Vietoris sequence for HNN-extensions, applied as we did in the proof of \cref{main_v2_intro}, shows that $\cd_{\Q}(A) =  n-1$.
\end{proof}

\begin{theorem}
\label{hierarchy_2}
If $G$ is a hyperbolic and virtually compact special group with an $L^2$-independent hierarchy, then $G$ is virtually free-by-cyclic.
\end{theorem}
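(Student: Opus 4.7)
The proof is essentially an immediate combination of the two main tools already established: \cref{hierarchy} feeds directly into \cref{main_2}.

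More precisely, if $G$ is trivial then it is vacuously free-by-cyclic, so assume $G$ is non-trivial. Since $G$ has an $L^2$-independent hierarchy, \cref{hierarchy} applies and yields both $\cd_{\Q}(G) \leqslant 2$ and $b_i^{(2)}(G) = 0$ for all $i \neq 1$; in particular $b_2^{(2)}(G) = 0$. These are exactly the hypotheses in part (\ref{itm:zero_betti}) of \cref{main_2}. Since $G$ is additionally assumed to be hyperbolic and virtually compact special, \cref{main_2} gives that (\ref{itm:zero_betti}) implies (\ref{itm:vfbc}), so $G$ is virtually free-by-cyclic.

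The only step requiring any thought is the trivial observation that the hierarchy hypothesis together with \cref{hierarchy} really does supply both the cohomological dimension bound and the vanishing of $b_2^{(2)}$ needed to invoke \cref{main_2}; there is no further obstacle, and no hidden verification. In particular, we do not need to exhibit the ascending HNN-extension or the fibring explicitly — all of that work has been absorbed into the proof of \cref{main_2} in the introduction, which itself rests on \cref{main_v2_intro} and \cref{ascending_corollary}. So the proof reduces to a one-line citation of these two results.
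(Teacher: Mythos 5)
Your proposal is correct and is exactly the paper's argument: the paper's proof of \cref{hierarchy_2} is the one-line combination of \cref{hierarchy} (supplying $\cd_{\Q}(G)\leqslant 2$ and $b_2^{(2)}(G)=0$) with the implication (\ref{itm:zero_betti})$\Rightarrow$(\ref{itm:vfbc}) of \cref{main_2}. Your handling of the trivial-group case is a harmless extra remark.
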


\begin{proof}
This is \cref{hierarchy} combined with \cref{main_2}.
\end{proof}

\begin{proof}[Proof of Corollary \ref{ascending}]
This follows from combining \cref{main_2} with \cref{ascending_corollary}.
\end{proof}

We conclude the article with a conjecture.

\begin{conjecture}
	\label{conj}
	Let $\F$ be a skew-field.
	 \cref{main_v2_intro} remains valid if every occurrence of $\cd_{\Q}$ is replaced by $\cd_{\F}$, and  every $L^2$-Betti number $b_i^{(2)}$ is replaced by the agrarian Betti number $b_i^{\D_{\F H}}$.
\end{conjecture}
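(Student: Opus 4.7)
The plan is to follow the structure of the proof of \cref{main_v2_intro} verbatim, replacing every occurrence of the Linnell skew-field $\D_{\Q G}$ and of $L^2$-homology by the Jaikin-Zapirain skew-field $\D_{\F G}$ and agrarian $\D_{\F G}$-homology. Many of the inputs are already written in this generality, which makes the translation attractive: the Freiheitssatz \cref{freiheit} is stated and proved for an arbitrary $\D$-agrarian ring, the main result \cref{conjugacy_separated_subgroup_intro} of \cref{section_5} is purely geometric, and the combination theorem \cref{combination_theorem} together with the quasi-convex surgery of \cref{quasi-convex_subgroup} make no reference to $L^2$-cohomology beyond the existence of a suitable skew-field in which to do agrarian homology.

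First I would pass, as in the original proof, to a torsion-free finite-index subgroup $H'$ of $H$ that is compact special and RFRS; such a subgroup exists by Agol's theorem and the RFRS half of \cref{special_facts}. Over this RFRS group the skew-field $\D_{\F H'}$ is directly supplied by Jaikin-Zapirain's construction, so agrarian $\D_{\F H'}$-homology is defined. Fisher's fibring theorem~\cite[Theorem B]{fisher_21} is already formulated for these agrarian Betti numbers, so the ``easy'' case in which $b_1^{\D_{\F H}}(H)$ vanishes goes through unchanged: one obtains a finite-index fibring subgroup whose kernel inherits the desired $\fp_n(\F)$ or cohomological dimension conclusion.

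For the remaining case, where $b_1^{\D_{\F H}}(H) \neq 0$, I would establish an agrarian version of \cref{betti_extension}. The input from \cref{quasi-convex_subgroup} is geometric plus the agrarian Freiheitssatz, so it adapts directly. The splitting computation relies on agrarian analogues of \cref{independent} and \cref{independent_2}, whose proofs transplant verbatim once one has the twisted Laurent series embedding $\D_{\F G} \hookrightarrow \D_{\F N}[t^{\pm 1}\rrbracket$, which the authors already flag immediately after \cref{Laurent series}, citing \cite[Proposition 2.2(2)]{JaikinZapirain2021}. With these pieces in place, Bieri's Mayer--Vietoris long exact sequence over $\D_{\F G}$ yields an HNN-extension $G$ containing $H'$ quasi-convexly, with $b_1^{\D_{\F G}}(G)=0$, all higher agrarian Betti numbers of $H$ preserved, and $\cd_\F(G) = \cd_\F(H)$. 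Then the argument of \cref{main_v2_intro} itself, applied now with the agrarian Fisher fibring theorem, extracts the required finite-index subgroup and normal kernel.

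The genuine obstacle, and the reason the statement appears as a conjecture rather than a theorem, is the lack of an agrarian analogue of Schreve's theorem. Concretely, one needs to know that for a virtually compact special group $G$ the agrarian Betti numbers computed with respect to any torsion-free RFRS finite-index subgroup are well-defined intrinsic invariants of $G$ and behave multiplicatively under finite index, so that the reduction to $H'$ does not lose information. Until such a theorem is available one cannot even phrase $b_i^{\D_{\F H}}(H)$ for the ambient virtually compact special $H$ in a way that makes the statement sharp; once it is in place, the remainder of the translation — the Bass--Serre endgame that extracts the finitely generated kernel $N$ from the HNN-splitting, and the finiteness estimates on $N$ from the agrarian Mayer--Vietoris sequence — is routine bookkeeping.
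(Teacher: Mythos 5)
This statement is a \emph{conjecture} in the paper: the authors give no proof, explain that what blocks them is precisely the absence of an agrarian analogue of Schreve's theorem, and note in the ``Recent developments'' section that the conjecture was only later confirmed by Fisher--S\'anchez-Peralta. Your proposal is likewise not a proof --- it is a plan that ends by conceding a missing ingredient --- so it cannot be accepted as establishing the statement; at best it reproduces the authors' own expectation that the translation ``should'' work once the missing theory exists.

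Moreover, you misplace where the obstruction actually lies. The issue you name --- making sense of $b_i^{\D_{\F H}}(H)$ for the ambient, merely virtually RFRS group $H$, and multiplicativity under finite index --- is already dealt with in the paper via \cite[Lemma 6.3]{fisher_21} (this is exactly the caveat recorded after the conjecture), so it is not the bottleneck. The genuine gap sits inside the proof of the agrarian analogue of \cref{betti_extension}: the group $G = H'*_A$ produced by \cref{combination_theorem} is torsion-free and hyperbolic but only \emph{virtually} compact special, hence not known to be RFRS, and the same applies to the normal subgroup $N = \bignormal{H'}$ with $G = N \rtimes \Z$ used in \cref{independent}. In the $L^2$ setting the existence of the skew-fields $\D_{\Q G}$ and $\D_{\Q N}$ (and the compatibility needed for the Laurent-series trick of \cref{Laurent series}) is exactly what Schreve's theorem, via the strong Atiyah conjecture and \cref{special_facts}, supplies. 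In the agrarian setting Jaikin-Zapirain's construction of $\D_{\F G}$ is only available for RFRS groups, so there is no skew-field over which to even write down the Mayer--Vietoris sequences in \cref{independent}, \cref{independent_2}, and \cref{betti_extension}; your claim that these proofs ``transplant verbatim'' therefore fails at the first step, and this --- not the definitional bookkeeping for $H$ --- is why the statement had to remain a conjecture.
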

	
	There is a slight caveat here, since in the formulation of \cref{main_v2_intro}, the group $H$ does not have to be RFRS, or even locally indicable, and hence $\D_{\F H}$ is not defined. It is however defined for a finite-index subgroup, and a result of Fisher~\cite{fisher_21}*{Lemma 6.3} shows that one can define agrarian Betti numbers over Jaikin-Zapirain's skew-fields by passing to finite-index subgroups, and then rescaling by the index.

\bibliographystyle{amsalpha}
\bibliography{bibliography}

\end{document}